\newtheorem*{Thm-A}{Theorem A}
\newtheorem*{Thm-B}{Theorem B}
\newtheorem{thm}{Theorem}[section]
\newtheorem{lem}[thm]{Lemma}
\newtheorem{prop}[thm]{Proposition}
\theoremstyle{definition}
\newtheorem{defn}[thm]{Definition}
\newtheorem{rem}[thm]{\bf Remark}
\newtheorem{exm}[thm]{Example}
\numberwithin{equation}{section}
\def\Hom{{\rm Hom}}
\def\Ext{{\rm Ext}}
\def\im {{\rm im\, }}
\def\co{{\mathcal O}}
\def\coh{{\rm coh}\mbox{-}}
\def\bbX{{\mathbb X}}
\def\bbL{{\mathbb L}}
\def\bbZ{{\mathbb Z}}
\def\bfq{{\mathbf{q}}}
\def\bfp{{\mathbf{p}}}
\def\bfla{{{\boldsymbol\lambda}}}
\def\bfmu{{{\boldsymbol\mu}}}
\def\cS{{\mathcal S}}
\def\cT{{\mathcal T}}
\begin{document}
\title[Admissible homomorphisms and equivariant relations for WPL]{Admissible homomorphisms and equivariant relations between weighted projective lines}

\author[J. Chen, Y. Lin, S. Ruan and H. Zhang] {Jianmin Chen, Yanan Lin, Shiquan Ruan and Hongxia Zhang}

\subjclass[2010]{16W50, 14A22, 14H52, 30F10, 14F05}
\date{\today}
\keywords{weighted projective line, group action, equivariant equivalence, admissible homomorphism}%
\maketitle

\dedicatory{}%
\commby{}%

\begin{abstract} The string group acts on the category of coherent sheaves over a weighted projective line by degree-shift actions. We study the equivariant equivalence relations induced by degree-shift actions between weighted projective lines. We prove that such an equivariant equivalence is characterized by an admissible homomorphism between the associated string groups. We classify all these equivariant equivalences for the weighted projective lines of domestic and tubular types.
\end{abstract}

\maketitle

\section{Introduction}

Weighted projective lines are introduced in \cite{GL87}, which provide a geometric approach to canonical algebras in the sense of \cite{Rin}.
It is well known that weighted projective lines are related to compact Riemann surfaces or smooth projective curves via group actions; see \cite{Len16}. More precisely, any weighted projective line $\bbX$ is isomorphic to the orbifold quotient $X/G$ for a compact Riemann surface $X$ and a finite subgroup $G$ of the automorphism group ${\rm Aut}(X)$, where the weight structure of $\bbX$ is given by the ramification data of the quotient map $X \to X/G$. In particular, the category of coherent sheaves over $\bbX$ is equivalent to the category of $G$-equivariant coherent sheaves on $X$.

Moreover, by using group actions on the category of coherent sheaves, the authors in \cite{GL87, CCZ} established connections between weighted projective lines of tubular types and smooth elliptic curves, compare \cite{Po}. Recently, the authors found equivariant relations between certain weighted projective lines of tubular types in \cite{CC17} and \cite{CCR}, by considering the degree-shift actions on the category of coherent sheaves. In the present paper we study these equivariant relations systematically for all the weighted projective lines. In particular, we give a complete classification for domestic and tubular types.

Before stating the main results of this work, let us briefly introduce some notation first, for more details we refer to Section 2.

Let ${\bf p}=(p_1, p_2, \cdots, p_t)$ be a sequence consisting of integers with $p_i\geq 2$.
The string group $\bbL({\bf p})$ of type ${\bf p}$ is an abelian group on generators $\vec{x}_1, \vec{x}_2, \cdots, \vec{x}_t$ subject to the relations $p_1\vec{x}_1=p_2\vec{x}_2=\cdots = p_t\vec{x}_t$. A group homomorphism between string groups is called \emph{admissible} if it satisfies certain technical conditions, see Definition \ref{defn:AH}.
Let $\mathbf{k}$ be an algebraically closed field. Let ${\boldsymbol\lambda}=(\lambda_1, \lambda_2, \cdots, \lambda_t)$ be a sequence consisting of pairwise distinct points on the projective line $\mathbb{P}_{\mathbf k}^1$ over $\mathbf{k}$, normalized as $\lambda_1=\infty$, $\lambda_2=0$ and $\lambda_3=1$. A weighted projective line $\mathbb{X}(\mathbf{p}; {\boldsymbol\lambda})$ of weight type $\mathbf{p}$ and parameter sequence ${\boldsymbol\lambda}$ is obtained from $\mathbb{P}_{\mathbf k}^1$ by attaching the weight $p_i$ to each point $\lambda_i$ for $1\leq i\leq t$.
We simply write $\bbX(\bf p; {\boldsymbol\lambda})=\bbX(\bf p)$ when $t\leq 3$. Denote by $S(\mathbf{p}; {\boldsymbol\lambda})$ (\emph{resp.} ${\rm coh}\mbox{-}\mathbb{X}(\mathbf{p}; {\boldsymbol\lambda})$) the homogeneous coordinate algebra (\emph{resp.} the category of coherent sheaves) on the weighted projective line $\mathbb{X}(\mathbf{p}; {\boldsymbol\lambda})$.
Let $H$ be a finite subgroup of $\bbL({\bf p})$. There is an $H$-action on ${\rm coh}\mbox{-}\mathbb{X}(\mathbf{p}; {\boldsymbol\lambda})$ given by degree-shifts, and the associated equivariant category will be denoted by $\big({\rm coh}\mbox{-}\mathbb{X}(\mathbf{p}; {\boldsymbol\lambda})\big)^H$.

We have the following main result of this paper, summarizing  Theorem \ref{from admissible to equivariant} and Theorem \ref{from equivariant to admissible}.

\begin{Thm-A}\label{theorem for equivariant introduction}
Let $H$ be a finite subgroup of the string group $\bbL(\bf p)$.
Then the following statements are equivalent:

\begin{itemize}
    \item [(1)] there exists an admissible homomorphism $\pi\colon \mathbb{L}(\mathbf{p})\rightarrow \mathbb{L}(\mathbf{q})$ with $\ker\pi=H$;
  \item [(2)] for any parameter sequence ${\boldsymbol\lambda}$,
    there exists a parameter sequence ${\boldsymbol\mu}$,
   s.t.  $$({\rm coh}\mbox{-}\mathbb{X}(\mathbf{p}; {\boldsymbol\lambda}))^{H}\stackrel{\sim}\longrightarrow {\rm coh}\mbox{-}\mathbb{X}(\mathbf{q}; {\boldsymbol\mu}).$$
\end{itemize}
\end{Thm-A}

Theorem A indicates that the equivariant relations between weighted projective lines induced by degree-shift actions are characterized by the admissible homomorphisms between the associated string groups. On the other hand, an admissible homomorphism $\pi\colon \bbL(\mathbf{p})\rightarrow \bbL(\mathbf{q})$ is essentially determined by its kernel in the sense of Proposition \ref{from subgroup to admissible homomorphism}, which is a finite subgroup of the string group $\bbL(\mathbf{p})$ of Cyclic type or Klein type (see Proposition \ref{ker pi}). Basing on this fact, we have the following classification result, see Theorem \ref{classification}.

\begin{Thm-B}\label{theorem on admissible in troduction} Assume $\bbL(\mathbf{p})$ and $\bbL(\mathbf{q})$ are both of domestic types or tubular types. Then all the admissible homomorphisms $\pi: \bbL(\bf p)\to \bbL(\bf q)$ are given in Table \ref{table for domestic admissible} (on page 14) and Table \ref{table for tubular admissible} (on page 15) respectively.
\end{Thm-B}

It is not sensible to state such a classification result for wild types. Instead, we produce a procedure to determine all the admissible homomorphisms $\pi: \bbL(\mathbf{p})\to \bbL(\mathbf{q})$'s for any given weight type $\mathbf{p}$ in Section \ref{subsection of classification of adm}, and provide a typical example in Example \ref{example for 46710}.

As an immediate consequence of Theorem A and Theorem B, we can obtain all the equivariant equivalences induced by degree-shift actions between weighted projective lines of domestic types, which can be expressed in the following Figure.

\begin{figure}[H]
\scriptsize
$$
\begin{array}{c}
\xymatrix{
(2,3,4)\ar[d]_{C_2}&&&&&\\
(2,3,3)\ar[d]_{C_3}&(2,2,2p_3)\ar[d]^{C_2}\ar@/^1pc/[ddr]^{C_2\times C_2}&&&(nq,n)\ar[ddd]^{C_n}&(nq_1,nq_2)\ar[ddd]^{C_n}\\
(2,2,2)\ar[d]_{C_2}\ar@/^1pc/[ddr]^{C_2\times C_2}&(2,2,p_3)\ar[rd]^{C_2}&&(np_3,np_3)\ar[dl]^{C_n}&&\\
(2,2)\ar[dr]_{C_2}&&(p_3,p_3)\ar[dl]^{C_{p_3}}&&&\\
&()&&&(q)&(q_1,q_2)
}\\
\end{array}
$$
\caption{Equivaraiant relations between domestic types}\label{fig:admissible for domestic introduction}
\end{figure}
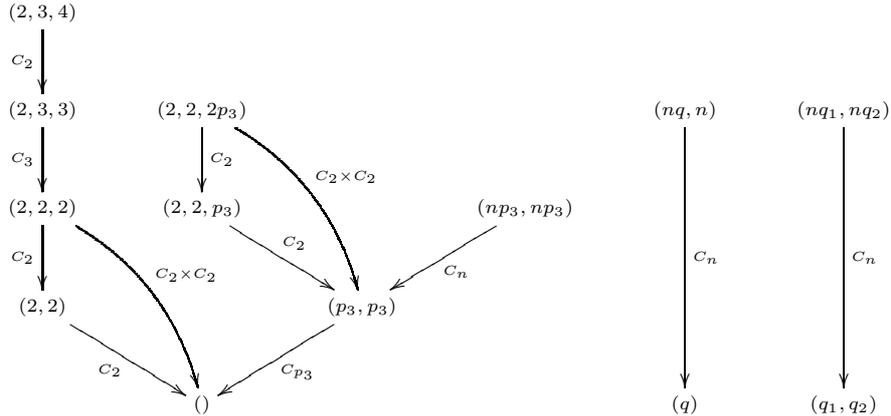
\noindent Here, a weight symbol $(a,b,c)$ stands for the (isoclass of the) weighted projective line $\mathbb{X}(a,b,c)$; an arrow $\xymatrix{\mathbb{X}({\bf p})\ar[r]^{H} &\mathbb{X}({\bf q})}$ stands for an equivalence $(\coh{\mathbb{X}({\bf p})})^{H}\stackrel{\sim}\longrightarrow\coh{\mathbb{X}({\bf q})}$; and the symbol $C_m$ (resp. $C_2\times C_2$) stands for a finite subgroup of $\bbL({\bf p})$ of Cyclic type with order $m$ (resp. of Klein type).

The main connected component of Figure \ref{fig:admissible for domestic introduction} has closed relation to \cite[Figure 1]{Len16}, more precisely, its dual graph coincides with the full subgraph of \cite[Figure 1]{Len16} consisting of those arrows marked by \textbf{abelian} groups (an arrow connecting $(2,2,2n)$ and $(n,n)$ is missing there). For an explanation of this `duality' we refer to \cite[Theorem 4.6]{CCR}, which states that for a finite \textbf{abelian} group action on a linear category, the dual action (given by the character group acting on the equivariant category) recovers the original category, compare \cite[Theorem 4.2]{El2014} and \cite[Theorem 4.4]{DGNO}.

Now we consider the equivariant relations between weighted projective lines of tubular types.
Recall that a weight sequence $\mathbf{p}$ is of tubular type if and only if $\mathbf{p}=(2,2,2,2)$, $(3,3,3)$, $(4,4,2)$ or $(6,3,2)$ up to permutation. By our normalization assumption, we can write a weighted projective line of type (2,2,2,2) as $\mathbb{X}(2,2,2,2;\lambda)$ with $\lambda\in\mathbf{k}\backslash\{0,1\}$. For the classifications of the equivariant relations between tubular types, the parameter $\lambda$ plays a key role.
For convenience we denote by $\Gamma(\lambda)$ the multiset $\{\lambda, \frac{1}{\lambda},1-\lambda,\frac{1}{1-\lambda},\frac{\lambda}{\lambda-1}, \frac{\lambda-1}{\lambda}\}$ and let $f(x)=(\frac{x+1}{x-1})^2$ for $x\neq 1$. Then all the equivariant equivalences induced by degree-shift actions between weighted projective lines of tubular types are listed below:

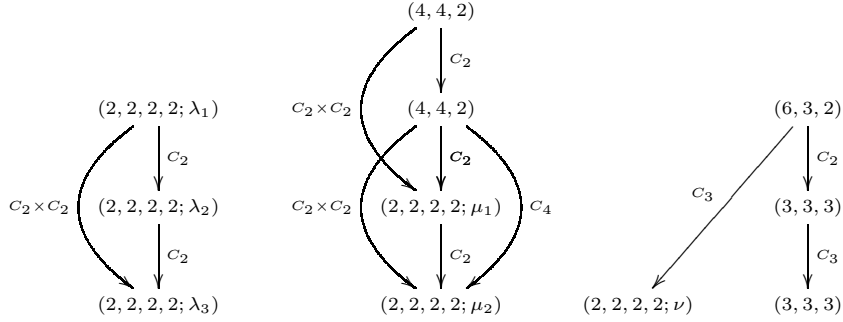
\begin{figure}[H]
\scriptsize
$$\xymatrix{
&&(4,4,2)\ar[d]^{C_2}\ar@/_2.5pc/[dd]_{C_2\times C_2}&&&&\\
(2,2,2,2;\lambda_1)\ar[d]^{C_2}\ar@/_2.5pc/[dd]_{C_2\times C_2}&&(4,4,2)\ar[d]^{C_2}\ar@/_2.5pc/[dd]_{C_2\times C_2}\ar[d]^{C_2}\ar@/^2.5pc/[dd]^{C_4}&&(6,3,2)\ar[ddl]_{C_3}\ar[d]^{C_2}&&\\
(2,2,2,2;\lambda_2)\ar[d]^{C_2}&&(2,2,2,2;\mu_1)\ar[d]^{C_2}&&(3,3,3)\ar[d]^{C_3}&\\
(2,2,2,2;\lambda_3)&&(2,2,2,2;\mu_2)&(2,2,2,2;\nu)&(3,3,3)
}$$
\caption{Equivaraiant relations between tubular types}
\end{figure}

\noindent where the parameters satisfy the following relations: $\lambda_1, \lambda_2, \lambda_3$ are arbitrary with $\Gamma(\lambda_{1})=\Gamma(\lambda_{3})$, while $\Gamma(\mu_1)=\Gamma(\mu_2)=\Gamma(-1)$ and $\Gamma(\nu)=\Gamma(\frac{1+ \sqrt{-3}}{2})$; moreover, for each arrow $(2,2,2,2;\lambda)\xrightarrow{C_2}(2,2,2,2;\mu)$, we require $\Gamma(\mu)=\Gamma(f(\sqrt{\lambda'}))$ for some $\lambda'\in\Gamma(\lambda)$.

According to Proposition \ref{ker pi} and Proposition \ref{from subgroup to admissible homomorphism}, the statement (1) or (2) in Theorem A holds if and only if $H=0$ or $H$ is of Cyclic type or Klein type. Then a natural question is:
what's the type of the equivariant category $({\rm coh}\mbox{-}\mathbb{X}(\mathbf{p}; {\boldsymbol\lambda}))^{H}$ for general subgroup $H$'s? For the answer we refer to the forthcoming paper, where the notion of weighted projective curves in the sense of Lenzing \cite{Len16} will be involved.

The paper is organized as follows.
In Section 2, we recall some basic facts on weighted projective lines and review the definition of a group action on a category.
Section 3 {focuses} on the study of the properties of admissible homomorphisms. We prove that admissible homomorphisms preserve domestic, tubular and wild types respectively due to an explicit description of their kernels. Moreover, we show that an admissible homomorphism is determined by its kernel in certain sense, which helps us to classify all the admissible homomorphisms for domestic and tubular types. The main result of this paper is stated in Section 4. We show that the equivariant relations induced by degree-shift actions between weighted projective lines are characterized by admissible homomorphisms between the associated string groups. As an application, we classify all the equivariant equivalences between the categories of coherent sheaves for domestic and tubular types in Section 5.

\section{Weighted projective lines and admissible homomorphisms}

In this section, we recall from \cite{GL87,GL90, CC17} some basic facts on weighted projective lines and admissible homomorphisms between string groups.

\subsection{String group}

Let $t\geq 0$ be an integer. Let ${\bf p}=(p_1, p_2, \cdots, p_t)$ be a sequence
of integers with each $p_i\geq 2$.
The \emph{string group} $\bbL({\bf p})$ of type ${\bf p}$ is an abelian group (written additively) on generators $\vec{x}_1, \vec{x}_2, \cdots, \vec{x}_t$, subject to the relations $p_1\vec{x}_1=p_2\vec{x}_2=\cdots = p_t\vec{x}_t$, where this common element is denoted by $\vec{c}$ and called the \emph{canonical element} of $\bbL(\mathbf{p})$.
The string group $\bbL(\mathbf{p})$ has rank one, where $\vec{c}$ has infinite order. There is an isomorphism of abelian groups
\begin{align}\label{isomorphism of L} \bbL(\mathbf{p})/\mathbb{Z}\vec{c}\stackrel{\sim}\longrightarrow \prod_{i=1}^t \mathbb{Z}/p_i\mathbb{Z},\end{align}
sending $\vec{x}_i+\mathbb{Z}\vec{c}$ to the vector $(0, \cdots,0,  \bar{1}, 0, \cdots, 0)$ with $\bar{1}$ on the $i$-th component. Using (\ref{isomorphism of L}), each element $\vec{x}$ in $\bbL(\mathbf{p})$ can be uniquely written in its \emph{normal form}
\begin{align}\label{equ:nor}
\vec{x}=\sum_{i=1}^t l_i\vec{x}_i+l\vec{c},
\end{align}
where $0\leq l_i\leq p_i-1$ for $1\leq i\leq t$ and $l\in \mathbb{Z}$. With this normal form, we define two functions $$\mu(\vec{x}):=|\{\,i\,|\,l_i\neq 0, \,1\leq i\leq t\}| \text{\quad
and\quad}  {\rm mult}(\vec{x}):={\rm max}\{l+1, 0\},$$
here and forward we use $|\cS|$ to denote the cardinality of a finite set $\cS$.

Denote by $p={\rm l.c.m.}(p_1,p_2,\cdots,p_t)$. There is a surjective group homomorphism $\delta\colon \bbL(\mathbf{p})\rightarrow \mathbb{Z}$ given by $\delta(\vec{x}_i)=\frac{p}{p_i}$ for $1\leq i\leq t$. The torsion group $t\bbL(\mathbf{p})$ of the string group $\bbL(\mathbf{p})$ coincides with $\ker\delta$, which has cardinality $\frac{\prod_{i=1}^tp_i}{p}$.
For each $1\leq i\leq t$, there is a surjective group homomorphism $\pi_i\colon \bbL(\mathbf{p})\rightarrow \mathbb{Z}/p_i\mathbb{Z}$ with $\pi_i(\vec{x}_j)=\delta_{i,j}\bar{1}$. Here, $\delta_{i,j}$ is the Kronecker symbol. By \cite[Definition 6.5]{CCZ}, an infinite subgroup $H\subseteq \bbL(\mathbf{p})$ is called \emph{effective} if $\pi_i(H)=\mathbb{Z}/p_i\mathbb{Z}$ for each $i$.

Recall that the \emph{dualizing element} $\vec{\omega}$ in $\bbL(\mathbf{p})$ is defined as $\vec{\omega}=(t-2)\vec{c}-\sum_{i=1}^t \vec{x}_i$. Hence we have $\delta(\vec{\omega})=p((t-2)-\sum_{i=1}^t\frac{1}{p_i})$.
The string group $\bbL(\mathbf{p})$ is called of \emph{domestic, tubular or wild type} provided that $\delta(\vec{\omega})<0$, $\delta(\vec{\omega})=0$ or $\delta(\vec{\omega})>0$ respectively. More precisely, we have the following trichotomy for the string groups $\bbL(\mathbf{p})$ according to the type $\mathbf{p}$ (up to permutation):
\begin{itemize}
  \item [(i)] domestic type: $(), (p), (p_{1},p_{2})$, $(2,2,p_3)$, $(2,3,3)$, $(2,3,4)$ and $(2,3,5)$;
  \item [(ii)] tubular type: $(2,2,2,2)$, $(3,3,3)$, $(4,4,2)$  and $(6,3,2)$;
  \item [(iii)] wild type: all the other cases.
\end{itemize}

\subsection{Weighted projective line}

Let $\mathbf{k}$ be an algebraically closed field. Let ${\boldsymbol\lambda}=(\lambda_1, \lambda_2, \cdots, \lambda_t)$ be a sequence of
pairwise distinct points on the projective line $\mathbb{P}_{\mathbf k}^1$. Such a sequence can be normalized such that $\lambda_1=\infty$, $\lambda_2=0$ and $\lambda_3=1$.
A \emph{weighted projective line} $\mathbb{X}(\mathbf{p}; {\boldsymbol\lambda})$ of weight type $\mathbf{p}$ and parameter sequence ${\boldsymbol\lambda}$ is obtained from $\mathbb{P}_{\mathbf k}^1$ by attaching the weight $p_i$ to each point $\lambda_i$ for $1\leq i\leq t$. We will always assume ${\boldsymbol\lambda}$ is normalized in this paper unless stated otherwise,
and simply write $\bbX(\bf p; {\boldsymbol\lambda})=\bbX(\bf p)$ for $t\leq 3$.

The \emph{homogeneous coordinate algebra} $S(\mathbf{p}; {\boldsymbol\lambda})$ of the weighted projective line $\mathbb{X}(\mathbf{p}; {\boldsymbol\lambda})$ is given by $\mathbf{k}[X_1, X_2, \cdots, X_t]/I$, where the ideal $I$ is generated by $X_i^{p_i}-(X_2^{p_2}-\lambda_iX_1^{p_1})$ for $3\leq i\leq t$. We write $x_i=X_i+I$ in $S(\mathbf{p}; {\boldsymbol\lambda})$.
The algebra $S(\mathbf{p}; {\boldsymbol\lambda})$ is $\bbL(\mathbf{p})$-graded by means of $\deg x_i=\vec{x}_i$. Then we have $S(\mathbf{p}; {\boldsymbol\lambda})=\bigoplus_{\vec{x}\in \bbL(\mathbf{p})} S(\mathbf{p}; {\boldsymbol\lambda})_{\vec{x}}$, where $S(\mathbf{p}; {\boldsymbol\lambda})_{\vec{x}}$ denotes the homogeneous component of degree $\vec{x}$. Write $\vec{x}$ in its normal form (\ref{equ:nor}), then $S(\mathbf{p}; {\boldsymbol\lambda})_{\vec{x}}\neq 0$ if and only if $l\geq 0$. Moreover, by \cite[Proposition 1.3]{GL87}, $\{x_1^{ap_1}x_2^{bp_2}x_1^{l_1}x_2^{l_2}\cdots x_t^{l_t}\; |\; a+b=l, a, b\geq 0\}$ form a ${\mathbf k}$-basis of $S(\mathbf{p}; {\boldsymbol\lambda})_{\vec{x}}$. We deduce that
$${\rm dim}_{\mathbf k}\; S(\mathbf{p}; {\boldsymbol\lambda})_{\vec{x}}={\rm mult}(\vec{x}), \mbox{ for all } \vec{x}\in \bbL(\mathbf{p}).$$
For an infinite subgroup $H\subseteq \bbL(\mathbf{p})$, we have a \emph{restriction subalgebra} $S(\mathbf{p}; {\boldsymbol\lambda})_H=\bigoplus_{\vec{x}\in H}S(\mathbf{p}; {\boldsymbol\lambda})_{\vec{x}}$ of $S(\mathbf{p}; {\boldsymbol\lambda})$. According to \cite[Lemma 6.2]{CCZ}, $S(\mathbf{p}; {\boldsymbol\lambda})_H$ is a finitely generated $H$-graded $\mathbf{k}$-algebra.

We recall the definition of the category ${\rm coh}\mbox{-}\mathbb{X}(\mathbf{p}; {\boldsymbol\lambda})$ of coherent sheaves over $\mathbb{X}(\mathbf{p}; {\boldsymbol\lambda})$ by a more convenient description via graded $S(\mathbf{p}; {\boldsymbol\lambda})$-modules. We denote by ${\rm mod}^{\bbL(\mathbf{p})}\mbox{-}S(\mathbf{p}; {\boldsymbol\lambda})$ the abelian category of finitely generated $\bbL(\mathbf{p})$-graded $S(\mathbf{p}; {\boldsymbol\lambda})$-modules, and by ${\rm mod}_0^{\bbL(\mathbf{p})}\mbox{-}S(\mathbf{p}; {\boldsymbol\lambda})$ its Serre subcategory formed by finite dimensional modules. We denote by ${\rm qmod}^{\bbL(\mathbf{p})}\mbox{-}S(\mathbf{p}; {\boldsymbol\lambda}):={\rm mod}^{\bbL(\mathbf{p})}\mbox{-}S(\mathbf{p}; {\boldsymbol\lambda})/{{\rm mod}_0^{\bbL(\mathbf{p})}\mbox{-}S(\mathbf{p}; {\boldsymbol\lambda})}$ the quotient abelian category. By \cite[Theorem 1.8]{GL87} the sheafification functor yields an equivalence
$$
{\rm qmod}^{\bbL(\mathbf{p})}\mbox{-}S(\mathbf{p}; {\boldsymbol\lambda})\stackrel{\sim}\longrightarrow {\rm coh}\mbox{-}\mathbb{X}(\mathbf{p}; {\boldsymbol\lambda}).
$$
From now on we will identify these two categories.

\subsection{Admissible homomorphism}
Throughout this paper we always assume that $\bbL(\mathbf{p})$ and $\bbL(\mathbf{q})$ are two string groups with ${\bf p}=(p_1, p_2, \cdots, p_t)$ and $\mathbf{q}=(q_1,q_2, \cdots, q_s)$. Denote by $p={\rm l.c.m.}(p_1, p_2, \cdots, p_t)$ and $q={\rm l.c.m.}(q_1, q_2, \cdots, q_s)$, and write
$$\bbL(\mathbf{p})=\bbZ\{\vec{x}_1, \vec{x}_2, \cdots, \vec{x}_t\}/(p_1\vec{x}_1=p_2\vec{x}_2=\cdots = p_t\vec{x}_t:=\vec{c})$$
and $$\bbL(\mathbf{q})=\bbZ\{\vec{z}_1, \vec{z}_2, \cdots, \vec{z}_s\}/(q_1\vec{z}_1=q_2\vec{z}_2=\cdots = q_s\vec{z}_s:=\vec{d}).$$

\begin{defn}\label{defn:AH}\cite{CC17}
A group homomorphism $\pi\colon \bbL(\mathbf{p})\rightarrow \bbL(\mathbf{q})$ is called \emph{admissible} if the following conditions are satisfied:
\begin{enumerate}
\item[(1)]  the subgroup $\im\pi\subseteq \bbL(\mathbf{q})$ is effective;
\item[(2)]  for each $\vec{z}\in \im\pi$, $\sum\limits_{\vec{x}\in \pi^{-1}(\vec{z})} {\rm mult}(\vec{x})={\rm mult}(\vec{z})$.
\end{enumerate}
\end{defn}

We point out that if $\pi$ is admissible, then $\ker\pi$ is a subgroup of the torsion group $t\bbL(\mathbf{p})$ of $\bbL(\mathbf{p})$, in particular, $|\ker\pi|$ is finite. Indeed, since $\bbL(\mathbf{p})$ has rank one and $\im\pi$ is infinite by effectiveness of $\pi$, we obtain that $\im\pi$ also has rank one and therefore $\ker\pi\subseteq t\bbL(\mathbf{p})$.

Let $\mathbb{X}(\mathbf{p}; {\boldsymbol\lambda})$ and $\mathbb{X}(\mathbf{q}; {\boldsymbol\mu})$ be weighted projective lines and $\pi\colon \bbL(\mathbf{p})\rightarrow \bbL(\mathbf{q})$ be a group homomorphism with $\im\pi$ infinite. An algebra homomorphism $\phi\colon S(\mathbf{p}; {\boldsymbol\lambda})\rightarrow S(\mathbf{q}; {\boldsymbol\mu})$ is called \emph{compatible} with $\pi$ if $\phi(S(\mathbf{p}; {\boldsymbol\lambda})_{\vec{x}})\subseteq S(\mathbf{q}; {\boldsymbol\mu})_{\pi(\vec{x})}$ for each $\vec{x}\in \bbL(\mathbf{p})$. Observe that $\phi$ induces a homomorphism between $\im\pi$-graded algebras
\begin{align}\label{equ:phi}
\bar{\phi}\colon \pi_*S(\mathbf{p}; {\boldsymbol\lambda})\longrightarrow S(\mathbf{q}; {\boldsymbol\mu})_{\im\pi};\quad x\mapsto \phi(x).
\end{align}
Here, $\pi_*S(\mathbf{p}; {\boldsymbol\lambda})=S(\mathbf{p}; {\boldsymbol\lambda})$ as an ungraded algebra, and the homogeneous component $(\pi_*S(\mathbf{p}; {\boldsymbol\lambda}))_{\vec{z}}=\bigoplus_{\vec{x}\in \pi^{-1}(\vec{z})} S(\mathbf{p}; {\boldsymbol\lambda})_{\vec{x}}$ for each $\vec{z}\in \im\pi$; $S(\mathbf{q}; {\boldsymbol\mu})_{\im\pi}$ is the restriction subalgebra of $S(\mathbf{q}; {\boldsymbol\mu})$ with respect to the subgroup $\im\pi\subseteq \bbL(\mathbf{q})$.

\begin{lem}\label{lem:phi}\cite{CC17}
 Assume that $\bar{\phi}$ in (\ref{equ:phi}) is surjective and $\pi\colon \bbL(\mathbf{p})\rightarrow \bbL(\mathbf{q})$ satisfies the condition (2) in Definition \ref{defn:AH}, then $\bar{\phi}$ is an isomorphism.
\end{lem}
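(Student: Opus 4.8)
The plan is to show that $\bar\phi$ is injective, since surjectivity is assumed; an injective and surjective homomorphism of $\im\pi$-graded algebras is then an isomorphism. Because $\bar\phi$ is a homomorphism of $\im\pi$-graded algebras, it respects the grading, so it suffices to prove that the restriction $\bar\phi_{\vec z}\colon (\pi_*S(\mathbf p;{\boldsymbol\lambda}))_{\vec z}\to S(\mathbf q;{\boldsymbol\mu})_{\vec z}$ is injective for each $\vec z\in\im\pi$. First I would record the two dimension counts that make this work: on the source side,
$$
\dim_{\mathbf k}(\pi_*S(\mathbf p;{\boldsymbol\lambda}))_{\vec z}=\sum_{\vec x\in\pi^{-1}(\vec z)}\dim_{\mathbf k}S(\mathbf p;{\boldsymbol\lambda})_{\vec x}=\sum_{\vec x\in\pi^{-1}(\vec z)}{\rm mult}(\vec x),
$$
using the formula $\dim_{\mathbf k}S(\mathbf p;{\boldsymbol\lambda})_{\vec x}={\rm mult}(\vec x)$ recalled in Section 2; on the target side, $\dim_{\mathbf k}S(\mathbf q;{\boldsymbol\mu})_{\vec z}={\rm mult}(\vec z)$ by the same formula applied to $\mathbf q$. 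Condition (2) in Definition \ref{defn:AH} says precisely that these two numbers are equal, and each is finite.

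Next I would argue that a surjective $\mathbf k$-linear map between finite-dimensional vector spaces of equal dimension is automatically injective, hence bijective. The only subtlety is to make sure $\bar\phi_{\vec z}$ really is surjective onto the full graded piece $S(\mathbf q;{\boldsymbol\mu})_{\vec z}$: this is exactly what the hypothesis ``$\bar\phi$ in (\ref{equ:phi}) is surjective'' delivers, because surjectivity of a graded map is equivalent to surjectivity in each degree. Assembling these degreewise bijections over all $\vec z\in\im\pi$ gives that $\bar\phi$ is a bijective homomorphism of $\im\pi$-graded algebras, and its inverse is automatically an algebra homomorphism (and graded), so $\bar\phi$ is an isomorphism.

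The main point to be careful about — really the only obstacle — is the finiteness and the bookkeeping of the fibre $\pi^{-1}(\vec z)$: one must check that only finitely many $\vec x\in\pi^{-1}(\vec z)$ contribute, i.e. have ${\rm mult}(\vec x)>0$, so that the sum $\sum_{\vec x\in\pi^{-1}(\vec z)}{\rm mult}(\vec x)$ is a genuine finite number and the equal-dimension argument applies. This follows from the remark just before the lemma: since $\pi$ satisfies condition (2) its image is infinite and $\ker\pi\subseteq t\bbL(\mathbf p)$ is finite, so $\pi^{-1}(\vec z)$ is a coset of the finite group $\ker\pi$; among its finitely many elements all but finitely many have $l<0$ in normal form and hence ${\rm mult}(\vec x)=0$ — in fact the whole fibre is finite here, so there is nothing further to worry about. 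Once this is noted, the proof is the short linear-algebra argument above.
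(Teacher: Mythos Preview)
Your proof is correct and is the natural dimension-counting argument. Note, however, that the paper itself does not supply a proof of this lemma: it is stated with a citation to \cite{CC17} and no proof environment follows. Your argument is presumably the one intended there.

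One small remark on your final paragraph: you write that ``since $\pi$ satisfies condition (2) its image is infinite and $\ker\pi\subseteq t\bbL(\mathbf p)$ is finite.'' Condition (2) alone does not obviously force $\im\pi$ to be infinite; rather, infiniteness of $\im\pi$ is a standing hypothesis in the paragraph preceding the lemma. More to the point, this detour is unnecessary: condition (2) directly asserts that $\sum_{\vec x\in\pi^{-1}(\vec z)}{\rm mult}(\vec x)={\rm mult}(\vec z)$, and the right-hand side is always a finite nonnegative integer, so the source graded piece $(\pi_*S(\mathbf p;\boldsymbol\lambda))_{\vec z}$ is finite-dimensional regardless of whether the fibre $\pi^{-1}(\vec z)$ is finite. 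The equal-dimension argument then applies without further justification.
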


\subsection{Equivariant category}

Now we are going to introduce how to relate two weighted projective lines via equivariantization. Firstly we recall from \cite{DGNO,De,CCZ} the equivariantization briefly.

Let $G$ be a group with unit $e$. Temporarily, we write $G$ mutliplicatively. A \emph{strict} $G$-action on a category $\mathcal{A}$ is a group homomorphism from $G$ to the automorphism group of $\mathcal{A}$, which assigns for each $g\in G$ an automorphism $F_g$ of $\mathcal{A}$. Hence, we have $F_e={\rm Id}_\mathcal{A}$ and $F_g F_h=F_{gh}$ for all $g,h\in G$.

A \emph{$G$-equivariant object} in $\mathcal{A}$ is a pair $(X, \alpha)$, where $X$ is an object in $\mathcal{A}$ and $\alpha$ assigns to each $g\in G$ an isomorphism $\alpha_g\colon X\rightarrow F_g(X)$ subject to the relations $\alpha_{gh}=F_g(\alpha_h)\circ \alpha_g$. A morphism $f\colon (X, \alpha)\rightarrow (Y, \beta)$ between equivariant objects is a morphism $f\colon X\rightarrow Y$ in $\mathcal{A}$ satisfying $\beta_g\circ f=F_g(f)\circ \alpha_g$. This gives rise to the \emph{equivariant category} $\mathcal{A}^G$ of equivariant objects, and the \emph{forgetful functor} $U: \mathcal{A}^G\to \mathcal{A}$ defined by $U(X, \alpha) =X$. The process forming the equivariant category $\mathcal{A}^G$ is known as the equivariantization with respect to the group action; see \cite{DGNO}.
Observe that if $\mathcal{A}$ is abelian, then so is $\mathcal{A}^G$. Indeed, a sequence of equivariant objects is exact in $\mathcal{A}^G$ if and only if so is the sequence of underlying objects in $\mathcal{A}$.

In what follows, we assume that the group G is finite and that $\mathcal{A}$ is an additive category. In this case, the forgetful functor $U$ admits a left adjoint $F: \mathcal{A}\to \mathcal{A}^G$, which is known as the \emph{induction functor}; see \cite[Lemma 4.6]{DGNO}. The functor $F$ is defined as follows: for an object $X$, set $F(X) = (\bigoplus_{h\in G} F_h(X), \varepsilon)$, where, for each $g\in G$, the isomorphism $\varepsilon_{g}: \bigoplus_{h\in G} F_h(X)\to  F_{g}(\bigoplus_{h\in G} F_h(X))$ is diagonally induced by the isomorphism $(\varepsilon_{g,g^{-1}h})^{-1}_X: F_h(X)\to F_{g}(F_{g^{-1}h}(X))$.

Now we consider a group action on the category ${\rm coh}\mbox{-}\mathbb{X}(\mathbf{p}; {\boldsymbol\lambda})$. For each subgroup $G\subseteq \bbL(\mathbf{p})$, we have a strict $G$-action on ${\rm mod}^{\bbL(\mathbf{p})}\mbox{-}S(\mathbf{p}; {\boldsymbol\lambda})$ by setting $F_{\vec{x}}=(\vec{x})$ for each $\vec{x}\in G$. Here, $(\vec{x})$ is the degree-shift action by the element $\vec{x}$. This $G$-action induces a strict $G$-action on ${\rm coh}\mbox{-}\mathbb{X}(\mathbf{p}; {\boldsymbol\lambda})$, yielding an equivariant category $({\rm coh}\mbox{-}\mathbb{X}(\mathbf{p}; {\boldsymbol\lambda}))^G$.

\begin{prop}\label{Prop:2.3}\cite{CC17}
Let $\pi\colon \bbL(\mathbf{p})\rightarrow \bbL(\mathbf{q})$ be an admissible homomorphism. Assume that the algebra homomorphism $\phi\colon S(\mathbf{p}; {\boldsymbol\lambda})\rightarrow S(\mathbf{q}; {\boldsymbol\mu})$ induces a surjective homomorphism $\bar{\phi}$ in (\ref{equ:phi}). Then $\bar{\phi}\colon \pi_*S(\mathbf{p}; {\boldsymbol\lambda})\rightarrow S(\mathbf{q}; {\boldsymbol\mu})_{\im\pi}$ is an isomorphism of $\im\pi$-graded algebras, which induces an equivalence of categories
\begin{align*}
({\rm coh}\mbox{-}\mathbb{X}(\mathbf{p}; {\boldsymbol\lambda}))^{\ker\pi}\stackrel{\sim}\longrightarrow {\rm coh}\mbox{-}\mathbb{X}(\mathbf{q}; {\boldsymbol\mu}).
\end{align*}
\end{prop}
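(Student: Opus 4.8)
The plan is to prove the statement in two stages, first upgrading $\bar\phi$ from a surjection to an isomorphism of $\im\pi$-graded algebras, and then translating this ring isomorphism into the claimed equivalence of sheaf categories via the equivariantization machinery recalled above. The first stage is immediate: since $\pi$ is admissible it satisfies condition (2) of Definition \ref{defn:AH}, so Lemma \ref{lem:phi} applies directly to the hypothesis that $\bar\phi$ is surjective and yields that $\bar\phi\colon \pi_*S(\mathbf{p};{\boldsymbol\lambda})\to S(\mathbf{q};{\boldsymbol\mu})_{\im\pi}$ is an isomorphism of $\im\pi$-graded algebras.

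For the second stage, the key observation is that the degree-shift $\ker\pi$-action on $\mathrm{mod}^{\bbL(\mathbf{p})}\text{-}S(\mathbf{p};{\boldsymbol\lambda})$ has an equivariant category which can be identified with $\mathrm{mod}^{\im\pi}\text{-}\pi_*S(\mathbf{p};{\boldsymbol\lambda})$. Indeed, for a subgroup $H\subseteq \bbL(\mathbf{p})$ acting by degree shifts on $\bbL(\mathbf{p})$-graded modules, an $H$-equivariant object is a graded module $M$ together with isomorphisms $\alpha_{\vec h}\colon M\to M(\vec h)$ satisfying the cocycle condition, and unwinding this (the cocycle condition forces the $\alpha_{\vec h}$ to glue into a single compatible system) shows that the datum is equivalent to a $\bbL(\mathbf{p})/H$-grading refining the underlying module structure; applying this with $H=\ker\pi$ and $\bbL(\mathbf{p})/\ker\pi\cong\im\pi$ gives an equivalence $\big(\mathrm{mod}^{\bbL(\mathbf{p})}\text{-}S(\mathbf{p};{\boldsymbol\lambda})\big)^{\ker\pi}\xrightarrow{\sim}\mathrm{mod}^{\im\pi}\text{-}\pi_*S(\mathbf{p};{\boldsymbol\lambda})$. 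One then checks this equivalence sends the Serre subcategory of finite-dimensional objects to finite-dimensional objects on both sides (the forgetful and induction functors are exact and preserve finite length, by the last remarks of the Equivariant category subsection), so it descends to the quotient categories, i.e.\ to an equivalence of the equivariant category $\big(\coh\mathbb{X}(\mathbf{p};{\boldsymbol\lambda})\big)^{\ker\pi}$ with $\mathrm{qmod}^{\im\pi}\text{-}\pi_*S(\mathbf{p};{\boldsymbol\lambda})$.

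Next I would transport along $\bar\phi$. Since $\bar\phi\colon \pi_*S(\mathbf{p};{\boldsymbol\lambda})\to S(\mathbf{q};{\boldsymbol\mu})_{\im\pi}$ is an isomorphism of $\im\pi$-graded algebras, restriction of scalars gives an isomorphism of the corresponding graded module categories, compatible with the finite-dimensional Serre subcategories, hence $\mathrm{qmod}^{\im\pi}\text{-}\pi_*S(\mathbf{p};{\boldsymbol\lambda})\cong\mathrm{qmod}^{\im\pi}\text{-}S(\mathbf{q};{\boldsymbol\mu})_{\im\pi}$. Finally, because $\im\pi$ is effective in $\bbL(\mathbf{q})$ (condition (1) of admissibility), the restriction subalgebra $S(\mathbf{q};{\boldsymbol\mu})_{\im\pi}$, regraded by $\im\pi$ which is a rank-one group with $\vec d$ of infinite order, has the same qmod category as the full homogeneous coordinate algebra: this is the standard fact (going back to \cite{GL87}, used e.g.\ in \cite{CCZ}) that passing to an effective subgroup of finite index does not change the associated noncommutative projective scheme, so $\mathrm{qmod}^{\im\pi}\text{-}S(\mathbf{q};{\boldsymbol\mu})_{\im\pi}\xrightarrow{\sim}\mathrm{qmod}^{\bbL(\mathbf{q})}\text{-}S(\mathbf{q};{\boldsymbol\mu})\cong\coh\mathbb{X}(\mathbf{q};{\boldsymbol\mu})$. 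Composing the three equivalences yields the desired $\big(\coh\mathbb{X}(\mathbf{p};{\boldsymbol\lambda})\big)^{\ker\pi}\xrightarrow{\sim}\coh\mathbb{X}(\mathbf{q};{\boldsymbol\mu})$.

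The main obstacle is the first identification of the second stage: verifying carefully that the $\ker\pi$-equivariant category of $\bbL(\mathbf{p})$-graded modules is precisely $\im\pi$-graded modules. One has to check that the cocycle condition $\alpha_{\vec h+\vec h'}=F_{\vec h}(\alpha_{\vec h'})\circ\alpha_{\vec h}$ on the equivariant structure corresponds exactly to the axioms of a refined $\im\pi$-grading, that morphisms match up, and crucially that under this equivalence the $\ker\pi$-action by degree shifts and its equivariantization interact correctly with the sheafification quotient — in particular that the finite-dimensional subcategories correspond. This is where the bookkeeping is heaviest; the effectiveness-of-$\im\pi$ step and the transport along $\bar\phi$ are comparatively formal once the relevant results from \cite{GL87} and \cite{CCZ} are invoked.
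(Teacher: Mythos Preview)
The paper does not prove this proposition: it is quoted verbatim from \cite{CC17} and no argument is given here. So there is no ``paper's own proof'' to compare against; your outline is in fact a reconstruction of the argument from \cite{CC17} (combined with results of \cite{CCZ}), and it is correct in strategy. Stage one is exactly Lemma~\ref{lem:phi}. For stage two, the identification $\big(\mathrm{mod}^{\bbL(\mathbf{p})}\text{-}S(\mathbf{p};{\boldsymbol\lambda})\big)^{\ker\pi}\simeq \mathrm{mod}^{\im\pi}\text{-}\pi_*S(\mathbf{p};{\boldsymbol\lambda})$ and its descent to qmod is precisely the mechanism used in \cite{CCZ}; the final step, that an effective finite-index subgroup $\im\pi\subseteq\bbL(\mathbf{q})$ gives $\mathrm{qmod}^{\im\pi}\text{-}S(\mathbf{q};{\boldsymbol\mu})_{\im\pi}\simeq\coh\mathbb{X}(\mathbf{q};{\boldsymbol\mu})$, is the content of \cite[Theorem~6.8]{CCZ} (this is where effectiveness is genuinely used). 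Your identification of the ``main obstacle'' is accurate but not serious: the cocycle bookkeeping is routine once one notes that for a strict degree-shift action the isomorphisms $\alpha_{\vec h}$ are just degree-$\vec h$ automorphisms of the underlying module, so the equivariant datum literally collapses to an $\bbL(\mathbf{p})/\ker\pi$-grading.
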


\section{Admissible homomorphism between string groups}

In this section, we study the properties of admissible homomorphisms between string groups.
We always fix a group homomorphism
\begin{align}\label{pi}\pi\colon \bbL(\mathbf{p})\rightarrow \bbL(\mathbf{q});\quad \vec{x}_i\mapsto \sum_{j=1}^s a_{ij}\vec{z}_j+a_{i,s+1}\vec{d}\;\; \;\;(1\leq i\leq t);
\end{align}
where $0\leq a_{ij}\leq q_j-1$ and $a_{i,s+1}\in\bbZ$ for each $1\leq i\leq t, \; 1\leq j\leq s$.

\begin{prop} \label{coprime}
Keep the notation as in (\ref{pi}). Then $\im\pi$ is effective if and only if ${\rm g.c.d.}(a_{1j}, a_{2j}, \cdots, a_{tj}, q_j)=1$ for each $1\leq j\leq s$.
\end{prop}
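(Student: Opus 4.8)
The plan is to unwind the definition of effectiveness in terms of the maps $\pi_j\colon \bbL(\mathbf{q})\to\bbZ/q_j\bbZ$ (the $j$-th coordinate projection modulo the canonical element) and translate it into a statement about the integer matrix $(a_{ij})$. By definition, $\im\pi$ is effective precisely when $\pi_j(\im\pi)=\bbZ/q_j\bbZ$ for every $1\le j\le s$. Since $\im\pi$ is generated as a group by $\pi(\vec{x}_1),\dots,\pi(\vec{x}_t)$ together with $\pi(\vec c)$, and since $\pi(\vec c)=p_1\pi(\vec x_1)$ lies in the subgroup generated by the $\pi(\vec x_i)$, the image $\pi_j(\im\pi)$ is exactly the subgroup of $\bbZ/q_j\bbZ$ generated by $\pi_j(\pi(\vec x_1)),\dots,\pi_j(\pi(\vec x_t))$.

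First I would compute $\pi_j(\pi(\vec x_i))$ explicitly: applying $\pi_j$ to $\pi(\vec x_i)=\sum_{k=1}^s a_{ik}\vec z_k+a_{i,s+1}\vec d$ and using $\pi_j(\vec z_k)=\delta_{j,k}\bar 1$ and $\pi_j(\vec d)=\pi_j(q_j\vec z_j)=\bar 0$ in $\bbZ/q_j\bbZ$, one gets $\pi_j(\pi(\vec x_i))=\overline{a_{ij}}$. Hence $\pi_j(\im\pi)$ is the subgroup of $\bbZ/q_j\bbZ$ generated by $\overline{a_{1j}},\dots,\overline{a_{tj}}$, which is $(\gcd(a_{1j},\dots,a_{tj})\cdot\bbZ + q_j\bbZ)/q_j\bbZ = (d_j\bbZ)/q_j\bbZ$ where $d_j=\gcd(a_{1j},\dots,a_{tj},q_j)$. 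This cyclic subgroup is all of $\bbZ/q_j\bbZ$ if and only if $d_j=1$. Running this equivalence over all $j$ gives the claim.

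The only genuine point that needs care is the passage from ``$\pi_j$ of the generators of $\bbL(\mathbf q)$'' to ``$\pi_j$ of the generators of $\im\pi$'' — that is, verifying that the canonical element $\vec c$ of $\bbL(\mathbf p)$ contributes nothing new, so that $\im\pi$ really is generated by the $\pi(\vec x_i)$ alone. This is immediate from the relation $\vec c=p_i\vec x_i$ in $\bbL(\mathbf p)$ (for any $i$, say $i=1$), so $\pi(\vec c)=p_1\pi(\vec x_1)$ already lies in the group generated by the $\pi(\vec x_i)$; thus no subtlety actually arises. The rest is the elementary fact that a cyclic group $\bbZ/q_j\bbZ$ is generated by a family of residues iff the gcd of those residues together with $q_j$ is $1$, which I would state without belaboring the arithmetic.
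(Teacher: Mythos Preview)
Your proposal is correct and follows essentially the same route as the paper: unwind effectiveness as $\pi_j(\im\pi)=\bbZ/q_j\bbZ$ for each $j$, observe that $\pi_j(\pi(\vec x_i))=\overline{a_{ij}}$, and reduce to the elementary fact that residues $\overline{a_{1j}},\dots,\overline{a_{tj}}$ generate $\bbZ/q_j\bbZ$ iff $\gcd(a_{1j},\dots,a_{tj},q_j)=1$. The paper's proof is slightly terser (it does not pause to note that $\pi(\vec c)$ is redundant among the generators), but the argument is the same.
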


\begin{proof} By definition, $\im\pi$ is effective if and only if $\pi_j(\im\pi )=\mathbb{Z}/q_j\mathbb{Z}$ for any $1\leq j\leq s$, if and only if there exist integers $k_{ij}$ for $1\leq i\leq t$ and $1\leq j\leq s$, such that $\sum_{i=1}^t k_{ij}a_{ij}=1\, ({\rm mod}\, q_j)$, or equivalently, $\sum_{i=1}^t k_{ij}a_{ij}+k_jq_j=1$ for some integer $k_j$, that is, ${\rm g.c.d.}(a_{1j}, a_{2j}, \cdots, a_{tj}, q_j)=1$.
\end{proof}

From now on, we always assume that $\pi$ in (\ref{pi}) is admissible.

\subsection{The kernel of $\pi$}

In this subsection, we first study the kernel of $\pi$.
Since $\ker\pi$ is a finite group, we assume
$\ker\pi=\{\vec{y}_1=0, \vec{y}_2, \cdots, \vec{y}_n\}$ with normal forms
\begin{align}\label{yi}\vec{y}_i=\sum_{j=1}^t b_{ij}\vec{x}_j+b_{i}\vec{c},\quad 2\leq i\leq n.
\end{align}
Recall that $\mu(\vec{y}_i)=|\{\,j\,|\,b_{ij}\neq 0, \,1\leq j\leq t\}|$. Obviously, $\delta(\vec{y}_i)=0$ implies $b_{i}<0$ and $\mu(\vec{y}_i)\geq 2$ for $2\leq i\leq n$. Moreover, we have

\begin{lem}\label{pi c}
Keep the notation as in (\ref{yi}). Then for $2\leq i\leq n$, we have
\begin{itemize}
  \item [(1)] $b_i=-1$;
  \item [(2)]  $\mu(\vec{y}_i)=2$.
\end{itemize}
Consequently, $\pi(\vec{c})=n\vec{d}$.
\end{lem}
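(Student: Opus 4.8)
The plan is to analyze the constraint that condition (2) of Definition~\ref{defn:AH} places on the elements $\vec y_i = \sum_{j=1}^t b_{ij}\vec x_j + b_i\vec c$ of $\ker\pi$, by exploiting the multiplicative structure of the functions ${\rm mult}$ and $\mu$ together with the fact that $\ker\pi$ is a group. Fix $i$ with $2\le i\le n$. Since $\pi(\vec y_i)=0$ and $0\in\im\pi$, applying condition (2) to $\vec z=0$ gives $\sum_{\vec x\in\ker\pi}{\rm mult}(\vec x)={\rm mult}(0)=1$; as ${\rm mult}(\vec y_1)={\rm mult}(0)=1$ already, every other $\vec y_i$ must satisfy ${\rm mult}(\vec y_i)=0$, i.e.\ $b_i\le -1$. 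This reproves the preliminary observation $b_i<0$, but it is the finer bookkeeping with a nonzero value of $\vec z$ that will pin down $b_i=-1$.

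The key step is to feed condition (2) a well-chosen $\vec z\in\im\pi$. First I would record the general fact that for any $\vec x$ in normal form $\sum l_j\vec x_j + l\vec c$ and any $k\ge 0$, the element $\vec x + k\vec c$ has normal form $\sum l_j\vec x_j + (l+k)\vec c$, so ${\rm mult}(\vec x + k\vec c) = \max\{l+k+1,0\}$; and that the fibre $\pi^{-1}(\vec z + k\vec d)$ is obtained from $\pi^{-1}(\vec z)$ by translating each element by something mapping to $k\vec d$ — in particular, since $\ker\pi$ is a group, the fibre over $0$ is $\ker\pi$ itself and the fibre over $\pi(\vec c)$ contains $\vec c + \ker\pi = \{\vec c + \vec y_i\}$. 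Now choose $\vec z = \pi(\vec x_1 \cdot$ (something making all fibre members have large $\vec c$-part)$)$; more concretely, pick $N$ large and apply (2) to $\vec z = \pi(N\vec c) = $ (an element of $\im\pi$ whose fibre is $N\vec c + \ker\pi$). For $N$ sufficiently large, every $N\vec c + \vec y_i$ has ${\rm mult}(N\vec c + \vec y_i) = N + b_i + 1$, and summing over $i=1,\dots,n$ the left side of (2) becomes $\sum_{i=1}^n (N + b_i + 1) = nN + n + \sum_{i=1}^n b_i$ (with $b_1=0$). The right side is ${\rm mult}(\vec z)$; since $\vec z = \pi(N\vec c)$ and $\pi(\vec c) = m\vec d$ for some positive integer $m$ (because $\im\pi$ has rank one and $\pi$ is injective on $\bbZ\vec c$ up to the torsion, forcing $\pi(\vec c)$ to be a positive multiple of $\vec d$ — here I need that $\vec d$ generates, modulo torsion, a finite-index subgroup and that $\delta$-values match), we get ${\rm mult}(\vec z) = {\rm mult}(Nm\vec d) = Nm + 1$ for large $N$. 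Comparing the coefficient of $N$ forces $n = m$, i.e.\ $\pi(\vec c) = n\vec d$, which is the "consequently" clause; and comparing the constant terms forces $n + \sum_{i=2}^n b_i = 1$, i.e.\ $\sum_{i=2}^n b_i = 1-n = -(n-1)$. Combined with $b_i\le -1$ for each of the $n-1$ indices $i\ge 2$, this is only possible if $b_i = -1$ for all $i$, giving (1).

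For part (2), that $\mu(\vec y_i)=2$: we already know $\mu(\vec y_i)\ge 2$. For the reverse inequality I would use (1) together with the condition (2) applied to $\vec z = \pi(\vec x_1 + \vec x_2)$ or, more uniformly, argue as follows. Suppose some $\vec y_i$ has $\mu(\vec y_i)\ge 3$; write $\vec y_i = \vec x_{j_1}\cdots$ with at least three nonzero coordinates and $b_i = -1$. Consider the element $\vec c + \vec y_i$: it has normal form with the same $b_{ij}$'s and $\vec c$-coefficient $0$, hence ${\rm mult}(\vec c+\vec y_i)=1$, and it lies in the fibre over $\pi(\vec c)=n\vec d$ together with all of $\vec c + \ker\pi$. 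But now I would look instead at fibres over elements $\vec z$ of the form $\pi(\vec x_{k})$ for $k$ among the "missing" coordinates of $\vec y_i$, exploiting that $\vec x_k + \ker\pi$ and, say, $\vec x_k + \vec y_i$ sit in the same fibre; the normal-form coefficient comparisons, using that subtracting $\vec y_i$ from $\vec x_k$ requires borrowing a $\vec c$ (this is where $\mu(\vec y_i)\ge 3$ versus $=2$ matters, since with three nonzero coordinates the "carry" behaviour in the normal form is forced), will produce a fibre whose ${\rm mult}$-sum exceeds ${\rm mult}(\vec z)$, contradicting (2). The cleanest route is probably: since $\ker\pi$ is a subgroup of $t\bbL(\mathbf p)$ and every nonzero element has $b_i=-1$, the image of $\ker\pi$ in $\prod\bbZ/p_i\bbZ$ under \eqref{isomorphism of L} consists of $0$ together with elements of "weight" $\mu(\vec y_i)$ in at most... — and a group whose nonzero elements all have $\vec c$-coefficient exactly $-1$ must, by closure under addition (adding two such gives $\vec c$-coefficient $-2$ unless there is a carry, which happens precisely when the supports overlap suitably), be very restricted; pushing this through shows the support size is exactly $2$. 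The main obstacle I anticipate is exactly this last point — controlling the normal-form "carries" when adding elements of $\ker\pi$ — so I would isolate a small combinatorial lemma: if $H\le\bbL(\mathbf p)$ is finite with every nonzero element having $\vec c$-coefficient $-1$, then every nonzero element has $\mu = 2$. That lemma, proved by taking two nonzero elements and computing the $\vec c$-coefficient of their sum in normal form, is the crux; everything else is the bookkeeping above.
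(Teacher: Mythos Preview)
Your approach to (1) is the paper's --- apply admissibility to the fibre over $\pi(N\vec c)$ for large $N$ --- but there is a real gap: you assume $\pi(\vec c)=m\vec d$ for some integer $m$ in order to get ${\rm mult}(\pi(N\vec c))=Nm+1$, and this is exactly part of what the lemma is asserting. A priori $\pi(\vec c)=\sum_j l_j\vec z_j + l\vec d$ with some $l_j\neq 0$, so ${\rm mult}(\pi(N\vec c))$ involves floor terms and is not linear in $N$. The paper sidesteps this by replacing $N$ with $mq$ where $q={\rm l.c.m.}(q_1,\dots,q_s)$: then $\pi(q\vec c)=q\,\pi(\vec c)$ is automatically a multiple $b\vec d$ of $\vec d$ (since $q\vec z_j\in\bbZ\vec d$), and the comparison of ${\rm mult}$'s over $m$ yields $b=nq$ and $\sum_{i\ge 2}(b_i+1)=0$, hence $b_i=-1$. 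The conclusion $\pi(\vec c)=n\vec d$ then still needs a short extra step --- the paper applies admissibility to the fibre over $\pi(\vec c)$ itself, using that now each ${\rm mult}(\vec c+\vec y_i)=1$ --- because $q\,\pi(\vec c)=nq\vec d$ alone does not pin down $\pi(\vec c)$ in the presence of torsion.

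For (2), your combinatorial lemma is correct and gives a route genuinely different from, and arguably cleaner than, the paper's. The paper applies admissibility to $\pi(\vec\omega)$ and to $\pi(\vec\omega+m\vec c)$ for $m\gg 0$, comparing the two resulting expressions to force $\mu(\vec y_i)=2$. Your idea avoids $\vec\omega$ and uses only part (1) and closure of $\ker\pi$ under negation: if $\vec y_i=\sum_{j\in S}b_{ij}\vec x_j-\vec c$ is in normal form with $|S|=k$ and $1\le b_{ij}\le p_j-1$, then $-\vec y_i=\sum_{j\in S}(p_j-b_{ij})\vec x_j+(1-k)\vec c$ is already in normal form, so its $\vec c$-coefficient is $1-k$; since $-\vec y_i\in\ker\pi\setminus\{0\}$, part (1) forces $1-k=-1$, i.e.\ $k=2$. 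This is the sharp version of your ``carries'' heuristic --- the point is to pair $\vec y_i$ with $-\vec y_i$ rather than with an arbitrary second element.
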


\begin{proof} (1) Assume $\pi(\vec{c})=\sum_{i=1}^s l_{i}\vec{z}_i+l\vec{d}$ is in normal form. Then $\pi$ is admissible implies $l\geq 1$. Recall that $q={\rm l.c.m.} (q_1, q_2,\cdots, q_s)$. Then $\pi(q\vec{c})=q\cdot\pi(\vec{c})=b\vec{d}$ for some integer $b\geq 0$. Now for $m\gg0$, we have ${\rm mult}(mq\vec{c}+\vec{y}_1)=mq+1$ and ${\rm mult}(mq\vec{c}+\vec{y}_i)=mq+b_i+1$ for $2\leq i\leq n$. Hence
$$\sum_{\vec{x}\in \pi^{-1}(mb\vec{d})} {\rm mult}(\vec{x})=(mq+1)+\sum_{i=2}^{n}(mq+b_{i}+1)=mnq+1+\sum_{i=2}^{n}(b_{i}+1).$$
On the other hand, we have $\pi(mq\vec{c})=mb\vec{d}$ and ${\rm mult}(\pi(mq\vec{c}))=mb+1$.
Then $\pi$ is admissible implies $mb+1=mnq+1+\sum_{i=2}^{n}(b_{i}+1)$ for $m\gg0$. Therefore, $b=nq$ and $\sum_{i=2}^{n}(b_{i}+1)=0$, which forces $b_{i}=-1$ for $2\leq i\leq n$ since each $b_i<0$.

Consequently, we have ${\rm mult}(\vec{c}+\vec{y}_i)=1$ for $2\leq i\leq n$, hence
$$l+1={\rm mult}(\pi(\vec{c}))={\rm mult}(\vec{c})+\sum_{i=2}^{n}{\rm mult}(\vec{c}+\vec{y}_i)=n+1.$$ Thus $l=n$. Then $\pi(q\vec{c})=\sum_{i=1}^s ql_{i}\vec{z}_i+qn\vec{d}=b\vec{d}=nq\vec{d}$ implies that $l_{i}=0$ for $1\leq i\leq s$. Hence $\pi(\vec{c})=n\vec{d}$.

(2) Assume $\pi(\vec{\omega})=\sum_{i=1}^s a_{i}\vec{z}_i+a\vec{d}$ is in normal form.
By (1) we obtain that
${\rm mult}(\pi(\vec{\omega}+m\vec{c}))={\rm mult}\big(\pi(\vec{\omega})+mn\vec{d} \big)=mn+a+1$ for $m\gg 0$. On the other hand, ${\rm mult}(\vec{\omega}+m\vec{c})=m-1$ and ${\rm mult}(\vec{\omega}+m\vec{c}+\vec{y}_i)=m+\mu(\vec{y}_i)-2$ for $2\leq i\leq n$.
Then $\pi$ is admissible implies $$mn+a+1=m-1+\sum_{i=2}^{n}(m+\mu(\vec{y}_i)-2),\quad m\gg 0.$$
Thus $\sum_{i=2}^{n}(\mu(\vec{y}_i)-2)=a+2$.

Observe that ${\rm mult}(\pi(\vec{\omega}))={\rm max}\{a+1,0\}$,
${\rm mult}(\vec{\omega})=0$ and ${\rm mult}(\vec{\omega}+\vec{y}_i)=\mu(\vec{y}_i)-2$ for $2\leq i\leq n$. Then $\pi$ is admissible yields
$${\rm max}\{a+1,0\}={\rm mult}(\pi(\vec{\omega}))=\sum_{\vec{x}\in \pi^{-1}(\pi(\vec{\omega}))} {\rm mult}(\vec{x})=0+\sum_{i=2}^{n}(\mu(\vec{y}_i)-2)=a+2,$$ which implies $a=-2$. It follows that $\mu(\vec{y}_i)$=2 for $2\leq i\leq n$.
\end{proof}

Recall that ${\bf p}=(p_1, p_2, \cdots, p_t)$.
Let $\sigma$ be a permutation on $\{1,2,\cdots,t\}$. Let $\pi_{\sigma}$ be the automorphism on the group $\bbL({\bf p})$ defined by $\pi_{\sigma}(\vec{x}_i)=\vec{x}_{\sigma(i)}$ for $1\leq i\leq t$. It is easy to check that $\pi_{\sigma}$ is admissible with $\ker\pi_{\sigma}=0$. On the other hand, we have

\begin{lem} \label{cor:3.5}
If $\ker\pi=0$, then $\pi=\pi_{\sigma}$ for some permutation $\sigma$ on $\{1,2,\cdots,t\}$. That is, up to permutation we have $\bf p=\bf q$ and $\pi={\rm id}$.
\end{lem}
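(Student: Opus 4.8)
The plan is to show that if $\ker\pi=0$ then $\pi$ must send each generator $\vec{x}_i$ to some generator $\vec{z}_{\sigma(i)}$, and that $\sigma$ is a permutation; the key leverage is Lemma \ref{pi c}, which already forces $\pi(\vec{c})=n\vec{d}$, hence $\pi(\vec{c})=\vec{d}$ in the case $n=|\ker\pi|=1$. First I would record the immediate consequences of $n=1$: $\pi(\vec{c})=\vec{d}$, and since $\im\pi$ is effective we have $\prod p_i = |t\bbL(\mathbf p)|\cdot p$ and similarly for $\mathbf q$; more usefully, $\delta_{\mathbf q}(\pi(\vec x_i))\cdot$(something) must be compatible with $\delta_{\mathbf p}(\vec x_i)=p/p_i$ under the relation $\delta_{\mathbf q}\circ\pi = \frac{q}{p}\,\delta_{\mathbf p}$ forced by $\pi(\vec c)=\vec d$ (so $q=p$ automatically, since $\delta_{\mathbf q}(\vec d)=q$ and $\delta_{\mathbf q}(\pi(\vec c))=\delta_{\mathbf q}(\vec d)=q$ while $\delta_{\mathbf p}(\vec c)=p$).

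Next I would apply condition (2) of admissibility to the elements $\vec x_i$ themselves. Since $\ker\pi=0$, the fiber $\pi^{-1}(\pi(\vec x_i))$ is the single element $\vec x_i$, so condition (2) gives $\mathrm{mult}(\pi(\vec x_i))=\mathrm{mult}(\vec x_i)=1$; writing $\pi(\vec x_i)=\sum_j a_{ij}\vec z_j + a_{i,s+1}\vec d$ in normal form, $\mathrm{mult}=1$ forces $a_{i,s+1}=0$. So each $\vec x_i$ maps into the "finite part": $\pi(\vec x_i)=\sum_j a_{ij}\vec z_j$ with $0\le a_{ij}\le q_j-1$. Then I would use $p_i\vec x_i=\vec c$, so $p_i\,\pi(\vec x_i)=\pi(\vec c)=\vec d$; comparing with the relation $q_j\vec z_j=\vec d$ and the basis description of $S(\mathbf q;{\boldsymbol\mu})$ (or just the structure of $\bbL(\mathbf q)$ via \eqref{isomorphism of L}), the equation $\sum_j p_i a_{ij}\vec z_j=\vec d$ pins down, modulo $\bbZ\vec d$, that $p_i a_{ij}\equiv 0 \pmod{q_j}$ for all $j$, and reading off the $\vec d$-coefficient gives a single index $j$ where the "wrap-around" happens. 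I would argue that effectiveness of $\im\pi$ (Proposition \ref{coprime}: $\gcd(a_{1j},\dots,a_{tj},q_j)=1$ for each $j$) together with these divisibility constraints forces, for each $j$, exactly one $i$ with $a_{ij}\ne 0$ and in fact $a_{ij}$ a unit mod $q_j$ with $q_j\mid p_i$; symmetrically one shows $t=s$ and the correspondence $i\leftrightarrow j$ is a bijection $\sigma$ with $p_i=q_{\sigma(i)}$ and $\pi(\vec x_i)=\vec z_{\sigma(i)}$ (after absorbing the unit: here one uses that $\mathrm{mult}(m\vec x_i)$ type tests, or rather the $\mu$ and $\mathrm{mult}$ bookkeeping from Lemma \ref{pi c} applied now with $n=1$, rule out a nontrivial unit coefficient — $\pi(\vec x_i)=u\vec z_{\sigma(i)}$ with $2\le u\le q_{\sigma(i)}-1$ would violate condition (2) on a suitable fiber).

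The cleanest way to finish the last point is a counting/rank argument: the composite $\bbL(\mathbf p)\xrightarrow{\pi}\bbL(\mathbf q)$ is injective ($\ker\pi=0$) and, by what precedes, restricts to an injection $t\bbL(\mathbf p)\hookrightarrow t\bbL(\mathbf q)$ (both torsion groups, since $\pi(\vec c)=\vec d$ matches the $\delta$-gradings), so $\prod p_i/p \le \prod q_j/q = \prod q_j/p$. Combined with the reverse inequality obtained by summing condition (2) over a fundamental domain — i.e. $\sum_{\vec z\in\im\pi}\mathrm{mult}(\vec z)$ counted two ways forces $\im\pi=\bbL(\mathbf q)$, hence $\pi$ is an isomorphism of groups of rank one with $\pi(\vec c)=\vec d$ — we get that $\pi$ carries the defining presentation of $\bbL(\mathbf p)$ to that of $\bbL(\mathbf q)$, and an isomorphism of string groups matching canonical elements must permute the (primitive, "corner") generators, giving $\mathbf p=\mathbf q$ up to order and $\pi=\pi_\sigma$. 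The main obstacle I anticipate is the bookkeeping in the middle step: ruling out both a nonzero $\vec d$-component and a non-unit coefficient in $\pi(\vec x_i)$ simultaneously, and showing the index map $\sigma$ is genuinely a bijection rather than merely a function — this is where one must be careful to invoke effectiveness (Proposition \ref{coprime}) at the right moment and to check that no two distinct $\vec x_i$ can map to (multiples of) the same $\vec z_j$, which again follows from injectivity of $\pi$ on the torsion part together with the $\mathrm{mult}$-additivity in Definition \ref{defn:AH}(2).
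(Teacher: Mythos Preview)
Your proposal is essentially correct and follows the same route as the paper. Your middle paragraph already contains the whole argument: from $n=|\ker\pi|=1$ and Lemma~\ref{pi c} you get $\pi(\vec c)=\vec d$; condition~(2) on the singleton fibre $\{\vec x_i\}$ gives $\mathrm{mult}(\pi(\vec x_i))=1$, so $a_{i,s+1}=0$; and then $p_i\,\pi(\vec x_i)=\vec d$ forces exactly one nonzero entry $a_{i,\sigma(i)}$ with $p_i a_{i,\sigma(i)}=q_{\sigma(i)}$ (your ``single wrap-around index''). This is precisely the paper's line of reasoning.

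The final paragraph (torsion-group counting, ``summing condition~(2) over a fundamental domain'') is an unnecessary detour, and the step ``forces $\im\pi=\bbL(\mathbf q)$'' is not clearly justified as written. You should instead finish directly, as the paper does: surjectivity of $\sigma$ is immediate from effectiveness (if some $j$ is missed by $\sigma$ then column $j$ of the associated matrix is zero, contradicting Proposition~\ref{coprime}); injectivity of $\sigma$ follows from admissibility and $\ker\pi=0$ (if $\sigma(i)=\sigma(i')=j$ with $i\neq i'$, test condition~(2) on, say, $\vec x=(p_i-1)\vec x_i+(p_{i'}-1)\vec x_{i'}$, whose image has $\mathrm{mult}=2$ while the singleton fibre has $\mathrm{mult}(\vec x)=1$). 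Once $\sigma$ is a bijection, your worry about a non-unit coefficient evaporates: column $\sigma(i)$ has the single nonzero entry $a_i:=a_{i,\sigma(i)}$, so Proposition~\ref{coprime} gives $\gcd(a_i,q_{\sigma(i)})=1$, and combined with $a_i\mid q_{\sigma(i)}$ (from $p_ia_i=q_{\sigma(i)}$) this yields $a_i=1$, hence $p_i=q_{\sigma(i)}$ and $\pi=\pi_\sigma$.
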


\begin{proof}
By Lemma \ref{pi c}, we get $\pi(\vec{c})=\vec{d}$. Then $\pi$ is admissible implies $\pi(\vec{x}_i)=a_{i}\vec{z}_{\sigma(i)}$ for $1\leq i\leq t$, where $1\leq \sigma(i)\leq s$ and $1\leq a_{i}<q_{\sigma(i)}$. This defines a map $\sigma: \{1,2,\cdots, t\}\to\{1,2,\cdots, s\}$. Now $\ker\pi=0$ implies $\sigma$ is injective, while $\im\pi$ is effective implies $\sigma$ is surjective.
Hence $\sigma$ is bijective and hence $s=t$.

Up to permutation, we can assume $\pi(\vec{x}_i)=a_{i}\vec{z}_i$ with $1\leq a_i<q_i$ for $1\leq i\leq t$. Then $\pi(\vec{c})=\vec{d}$ implies $p_ia_i=q_i$ for each $i$. According to Proposition \ref{coprime}, we have $a_i=(a_i, q_i)=1$ and then $p_i=q_i$ for each $i$. It follows that $\bf p=\bf q$ and $\pi={\rm id}$.
\end{proof}

In the following we assume $\pi$ is admissible with $\ker\pi\neq 0$ unless stated otherwise. We denote by $C_n$ the cyclic group of order $n$.
The following result plays a key role throughout this paper.

\begin{prop} \label{ker pi} Up to permutation, $\ker\pi$ has one of the following types:
 \begin{itemize}
   \item[(1)] Cyclic type: $\langle \frac{p_1}{n}\vec{x}_1-\frac{p_2}{n}\vec{x}_2\rangle$ with $n\, |\, {\rm g.c.d.}(p_1, p_2)$;
   \item[(2)] Klein type: $\langle \frac{p_1}{2}\vec{x}_1-\frac{p_2}{2}\vec{x}_2, \frac{p_1}{2}\vec{x}_1-\frac{p_3}{2}\vec{x}_3\rangle$ with $p_1, p_2, p_3$ even.
 \end{itemize}
Moreover, for Cyclic type we have $\ker\pi\cong C_{n}$ and $\pi(\frac{p_1}{n}\vec{x}_1)=\pi(\frac{p_2}{n}\vec{x}_2)=\vec{d}$; while for Klein type we have $\ker\pi\cong C_2\times C_2$ and $\pi(\frac{p_1}{2}\vec{x}_1)=\pi(\frac{p_2}{2}\vec{x}_2)=\pi(\frac{p_3}{2}\vec{x}_3)
=2\vec{d}$.
\end{prop}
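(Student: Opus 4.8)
The plan is to analyze the normal forms of the nonzero elements of $\ker\pi$ using the two structural facts already obtained in Lemma \ref{pi c}: every nonzero $\vec{y}_i$ has $b_i=-1$ and $\mu(\vec{y}_i)=2$, so $\vec{y}_i=\frac{p_j}{?}$-type expressions supported on exactly two of the generators $\vec{x}_1,\dots,\vec{x}_t$. First I would fix a nonzero element, say $\vec{y}_2$, supported (after permuting the $\vec{x}$'s) on indices $1$ and $2$, so $\vec{y}_2=b_{21}\vec{x}_1+b_{22}\vec{x}_2-\vec{c}$ with $1\le b_{2j}\le p_j-1$. Since $\vec{y}_2$ has finite order, some multiple $m\vec{y}_2$ must be $0$; combined with $\delta(\vec{y}_2)=0$, i.e.\ $\frac{p}{p_1}b_{21}+\frac{p}{p_2}b_{22}=p$, one forces $b_{21}=\frac{p_1}{n}$ and $b_{22}=\frac{p_2}{n}$ where $n$ is the order of $\vec{y}_2$ in $\bbL(\mathbf p)$, with $n\mid \gcd(p_1,p_2)$; this is the routine normal-form bookkeeping, using that $p_j\vec{x}_j=\vec{c}$ is the only relation. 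This establishes that any cyclic subgroup of the stated kind is the only possibility for a $\langle\vec{y}_2\rangle$, and the relations $\pi(\frac{p_1}{n}\vec{x}_1)=\pi(\frac{p_2}{n}\vec{x}_2)$ follow from $\vec{y}_2\in\ker\pi$ together with $\pi(\vec{c})=n\vec{d}$ from Lemma \ref{pi c}: adding $\vec{c}$ to $\vec{y}_2$ and applying $\pi$ gives $\pi(\frac{p_1}{n}\vec{x}_1)+\pi(\frac{p_2}{n}\vec{x}_2)=n\vec{d}$, while an independent count of multiplicities (or directly comparing $\delta$ of both sides) pins each summand to $\vec{d}$.

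Next I would treat the case $|\ker\pi|>n$, i.e.\ there is a second nonzero element $\vec{y}_3\notin\langle\vec{y}_2\rangle$. Again $\vec{y}_3$ is supported on exactly two generator indices. The key combinatorial step is to show these indices must overlap the support $\{1,2\}$ of $\vec{y}_2$ in exactly one point — say the support of $\vec{y}_3$ is $\{1,3\}$ — because if the supports were disjoint, or equal, one can produce (by adding $\vec{y}_2$ and $\vec{y}_3$, or a suitable combination) an element of $\ker\pi$ with $\mu>2$ or with $b\ne -1$, contradicting Lemma \ref{pi c}. Working this out: $\vec{y}_2+\vec{y}_3$ has normal form whose $\vec{c}$-coefficient is $-2$ plus carry-overs, and for $\mu(\vec{y}_2+\vec{y}_3)=2$ and its $\vec{c}$-coefficient to be $-1$ one needs precisely that the $\vec{x}_1$-coefficients $b_{21}$ and $b_{31}$ combine to a multiple of $p_1$ (forcing $b_{21}=b_{31}=\frac{p_1}{2}$, so $p_1$ even) and similarly one deduces $n=2$ for $\vec{y}_2$, hence $p_2$ even and $b_{22}=\frac{p_2}{2}$, and likewise $p_3$ even, $b_{33}=\frac{p_3}{2}$. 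This yields $\vec{y}_3=\frac{p_1}{2}\vec{x}_1-\frac{p_3}{2}\vec{x}_3$ and $\ker\pi\supseteq\langle\vec{y}_2,\vec{y}_3\rangle\cong C_2\times C_2$. Then I would rule out a further generator: a third independent involution would be supported on two indices and, by the same overlap analysis against the existing ones, force a fourth even weight and an element violating $\mu=2$; counting $|\ker\pi|$ via the multiplicity condition $\sum_{\vec x\in\pi^{-1}(\vec d)}\mathrm{mult}(\vec x)=\mathrm{mult}(\vec d)=2$ (or the analogue at $2\vec d$) caps the order at $4$. Finally, $\pi(\frac{p_1}{2}\vec{x}_1)=\pi(\frac{p_2}{2}\vec{x}_2)=\pi(\frac{p_3}{2}\vec{x}_3)=2\vec{d}$ follows as before: these three elements are equal modulo $\ker\pi$ (their pairwise differences are $\vec{y}_2,\vec{y}_3$), each maps to an element $\vec{z}$ with $2\vec z=\pi(\vec c)=2\vec d$ and $\mathrm{mult}(\vec z)$ forced to be $3$, giving $\vec z=2\vec d$.

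I expect the main obstacle to be the second paragraph: ruling out disjoint or otherwise "bad" configurations of supports and nailing down that $n=2$ and all three relevant $p_j$ are even. The argument is entirely via the normal-form addition in $\bbL(\mathbf p)$ — controlling the carry into the $\vec{c}$-coefficient when two normal forms are added — but one has to be careful that the relations $p_1\vec{x}_1=\dots=p_t\vec{x}_t$ are the \emph{only} identifications, so the normal form of $\vec{y}_i+\vec{y}_j$ is computed componentwise with carries, and then systematically check that each deviation from the claimed shape produces a kernel element contradicting Lemma \ref{pi c}(2) or a multiplicity count contradicting admissibility. Once the shape of the generators is fixed, everything else (the isomorphism types $C_n$ and $C_2\times C_2$, and the three $\pi$-values) is a short deduction from $\pi(\vec c)=n\vec d$ and the multiplicity condition in Definition \ref{defn:AH}(2) applied at $\vec d$ and $2\vec d$.
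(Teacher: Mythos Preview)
Your approach is essentially the paper's, but two steps need tightening.

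First, the ``equal supports'' sub-case does not yield a contradiction as you claim. If $\vec{y}_3\notin\langle\vec{y}_2\rangle$ is also supported on $\{1,2\}$, nothing in Lemma~\ref{pi c} is violated; rather, this just means the cyclic subgroup you started with was not maximal. The paper handles this by a Euclidean argument: among all nonzero kernel elements supported on $\{1,2\}$, the one with smallest $\vec{x}_1$-coefficient divides the rest (subtract a suitable multiple and note that a nonzero remainder would have smaller $\vec{x}_1$-coefficient, contradicting minimality). This shows the entire ``support-$\{1,2\}$'' part of $\ker\pi$ is already cyclic, and one should choose $\vec{y}_2$ to be its generator from the outset. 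Only then does $\vec{y}_3\notin\langle\vec{y}_2\rangle$ force $\vec{y}_3$ to have a different support.

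Second, in the Klein case your carry analysis of $\vec{y}_2+\vec{y}_3$ gives only $b_{21}+b_{31}=p_1$ (the unique multiple of $p_1$ reachable by two coefficients in $[1,p_1-1]$), not $b_{21}=b_{31}=\frac{p_1}{2}$. The paper obtains the missing equation by also examining $\vec{y}_2-\vec{y}_3$: after borrowing $\vec{c}$ to normalize the negative $\vec{x}_3$-coefficient, its $\vec{c}$-coefficient is $-1$ and one has three potentially nonzero coordinates; forcing $\mu=2$ requires $b_{21}-b_{31}=0$. The two equations together give $b_{21}=b_{31}=\frac{p_1}{2}$, whence $2\vec{y}_2=0$ and $2\vec{y}_3=0$ yield $b_{22}=\frac{p_2}{2}$ and $b_{33}=\frac{p_3}{2}$. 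With these two fixes your outline matches the paper's proof; the computations of $\pi(\frac{p_1}{n}\vec{x}_1)$ and $\pi(\frac{p_1}{2}\vec{x}_1)$ via multiplicity counts are exactly as you describe.
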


\begin{proof} By Lemma \ref{pi c}, up to permutation we can write $\vec{y}_2=b_{21}\vec{x}_1+b_{22}\vec{x}_2-\vec{c}$ in normal form.
For any $0\neq \vec{y}_i\in\ker\pi$, say, $\vec{y}_i=b_{ij}\vec{x}_j+b_{ik}\vec{x}_k-\vec{c}$, we claim that $\{1,2\}\cap \{j,k\}\neq \emptyset$. Otherwise, we have $\vec{y}_2+\vec{y}_i\in \ker\pi$ with $\mu(\vec{y}_2+\vec{y}_i)=4$, contradicting to Lemma \ref{pi c}. Therefore, we have the following two cases.
\begin{itemize}
  \item [(1)] Each nonzero element in $\ker\pi$ has the form $\vec{y}_i=b_{i1}\vec{x}_1+b_{i2}\vec{x}_2-\vec{c}$ for $2\leq i\leq n$. In this case, all the $b_{i1}$'s are pairwise distinct, hence we can assume $b_{21}<b_{31}<\cdots<b_{n1}$. We claim that $b_{21}\,|\,b_{i1}$ for $2\leq i\leq n$. For contradiction we assume $b_{i1}=b_{21}q_{i1}+r_{i1}$ with $0<r_{i1}<b_{21}$ for some $i$. Then $0\neq \vec{y}_i-q_{i1}\vec{y}_2\in\ker\pi$ with smaller coefficient of $\vec{x}_1$ in its normal form than $\vec{y}_2$, a contradiction. Hence $b_{21}\,|\,b_{i1}$ and it follows that $\vec{y}_i=\frac{b_{i1}}{b_{21}}\vec{y}_2$ for any $2\leq i\leq n$. Therefore, $\ker\pi$ is generated by $\vec{y}_2$ with order $n$. Then it follows that $\ker\pi=\langle \frac{p_1}{n}\vec{x}_1-\frac{p_2}{n}\vec{x}_2\rangle\cong C_{n}$.
  \item [(2)] There exists some $\vec{y}_i=b_{ij}\vec{x}_j+b_{ik}\vec{x}_k-\vec{c}\in\ker\pi$ with $\{j,k\}\neq\{1,2\}$. Up to permutation we can assume $(j,k)=(1,3)$ and then $\vec{y}_i=b_{i1}\vec{x}_1+b_{i3}\vec{x}_3-\vec{c}$. Hence, $0\neq \vec{y}_2\pm\vec{y}_i\in \ker\pi$. Then $\mu(\vec{y}_2\pm\vec{y}_i)=2$ implies $b_{21}+b_{i1}=p_1$ and  $b_{21}-b_{i1}=0$. It follows that $b_{21}=b_{i1}=\frac{p_1}{2}$. Now $2\vec{y}_2, 2\vec{y}_i\in\ker\pi$ implies that $b_{22}=\frac{p_2}{2}$ and $b_{i3}=\frac{p_3}{2}$ respectively.
      It follows that
      $\ker\pi=\langle \frac{p_1}{2}\vec{x}_1-\frac{p_2}{2}\vec{x}_2, \frac{p_1}{2}\vec{x}_1-\frac{p_3}{2}\vec{x}_3\rangle\cong C_2\times C_2.$
\end{itemize}

For case (1), assume $\pi(\frac{p_1}{n}\vec{x}_1)=\sum_{i=1}^sl_{i}\vec{z}_i+l\vec{d}$ in normal form.
Observe that
${\rm mult}(\frac{p_1}{n}\vec{x}_1)=1={\rm mult}(\frac{p_1}{n}\vec{x}_1+\vec{y}_n)$ and ${\rm mult}(\frac{p_1}{n}\vec{x}_1+\vec{y}_i)=0$ for $2\leq i\leq n-1$.
Then $\pi$ is admissible implies that $$2=\sum_{k=1}^n{\rm mult}(\frac{p_1}{n}\vec{x}_1+\vec{y}_i)
={\rm mult}(\pi(\frac{p_1}{n}\vec{x}_1))={\rm max}\{l+1, 0\}.$$ Thus $l=1$. Moreover, by Lemma \ref{pi c} we have $n\vec{d}=\pi(\vec{c})=n\cdot\pi(\frac{p_1}{n}\vec{x}_1)
=n(\sum_{i=1}^sl_{i}\vec{z}_i+\vec{d})$, which implies $l_i=0$ for $1\leq i\leq s$. Therefore, $\pi(\frac{p_1}{n}\vec{x}_1)=\pi(\frac{p_2}{n}\vec{x}_2)=\vec{d}$.

For case (2), we write $\pi(\frac{p_1}{2}\vec{x}_1)=\sum_{i=1}^sl_{i}\vec{z}_i+l\vec{d}$ in normal form. Then we have $3=\sum_{\vec{x}\in \pi^{-1}(\pi(\frac{p_1}{2}\vec{x}_1))} {\rm mult}(\vec{x})={\rm mult}(\pi(\frac{p_1}{2}\vec{x}_1))={\rm max}\{l+1, 0\}$. Thus $l=2$. Moreover, by Lemma \ref{pi c} we have $4\vec{d}=\pi(\vec{c})=2\cdot\pi(\frac{p_1}{2}\vec{x}_1)=
2(\sum_{i=1}^sl_{i}\vec{z}_i+2\vec{d})$, which implies $l_i=0$ for $1\leq i\leq s$. Hence $\pi(\frac{p_1}{2}\vec{x}_1)=\pi(\frac{p_2}{2}\vec{x}_2)=
\pi(\frac{p_3}{2}\vec{x}_3)=2\vec{d}$.
\end{proof}
\subsection{The associated matrix of $\pi$}

Recall that $\pi\colon \bbL(\mathbf{p})\rightarrow \bbL(\mathbf{q})$ is an admissible homomorphism, and $\pi(\vec{x}_i)=\sum_{j=1}^s a_{ij}\vec{z}_j+a_{i,s+1}\vec{d}$ is in normal form for each $1\leq i\leq t$.
Let
$$A=\begin{bmatrix}
a_{11}&a_{12} & \cdots&a_{1s}\\
a_{21}&a_{22} & \cdots&a_{2s}\\
\cdots&\cdots&\cdots&\cdots\\
a_{t1}&a_{t2}&\cdots&a_{ts}
\end{bmatrix},\;
\beta=\begin{bmatrix}
a_{1,s+1}\\
a_{2,s+1}\\
\cdots\\
a_{t,s+1}
\end{bmatrix},$$
and call
$\widehat{A}=\big[A \;\,\beta\big]$
the \emph{associated matrix} of $\pi$.

In this subsection, we study the properties of the associated matrix $\widehat{A}$. Since $\pi$ is admissible, we have $a_{i,s+1}\geq 0$ for $1\leq i\leq t$, i.e. $\beta\geq 0$. Moreover,

\begin{lem} \label{one part is zero}
For $1\leq i\leq t$, $a_{i,s+1}\neq 0$ if and only if $(a_{i1}, a_{i2}, \cdots, a_{is})=0$. In this case, $a_{i,s+1}=1$ or 2.
\end{lem}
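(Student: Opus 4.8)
The plan is to express $a_{i,s+1}$ through a counting formula involving only the normal forms of the elements of $\ker\pi$, and then to deduce both implications from the explicit description of $\ker\pi$ in Proposition \ref{ker pi} (which applies under the standing hypothesis $\ker\pi\neq 0$).

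First I would note that the $\vec{d}$-coefficient of $\pi(\vec{x}_i)$ in its normal form is $a_{i,s+1}\geq 0$, hence ${\rm mult}(\pi(\vec{x}_i))=a_{i,s+1}+1$. Applying condition $(2)$ of Definition \ref{defn:AH} to $\vec{z}=\pi(\vec{x}_i)$, and using $\pi^{-1}(\pi(\vec{x}_i))=\vec{x}_i+\ker\pi=\{\vec{x}_i+\vec{y}_k\mid 1\leq k\leq n\}$, this reads
\[
a_{i,s+1}+1=\sum_{k=1}^{n}{\rm mult}(\vec{x}_i+\vec{y}_k).
\]
By Lemma \ref{pi c}, for $2\leq k\leq n$ the element $\vec{y}_k$ has normal form $\sum_{j}b_{kj}\vec{x}_j-\vec{c}$, so adding $\vec{x}_i$ keeps the $\vec{c}$-coefficient at $-1$ unless $b_{ki}=p_i-1$, in which case $b_{ki}\vec{x}_i+\vec{x}_i=\vec{c}$ raises it to $0$; therefore ${\rm mult}(\vec{x}_i+\vec{y}_k)=1$ exactly when $b_{ki}=p_i-1$ and is $0$ otherwise, while ${\rm mult}(\vec{x}_i+\vec{y}_1)={\rm mult}(\vec{x}_i)=1$. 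This gives the key identity
\[
a_{i,s+1}=|\{\,k\mid 2\leq k\leq n,\ b_{ki}=p_i-1\,\}|.
\]

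Now I would settle the two implications. Assume first $(a_{i1},\dots,a_{is})=0$, i.e.\ $\pi(\vec{x}_i)=a_{i,s+1}\vec{d}$. Since $\mu(\vec{x}_i)=1$ while every nonzero element of $\ker\pi$ has $\mu=2$ by Lemma \ref{pi c}, we get $\vec{x}_i\notin\ker\pi$ and hence $a_{i,s+1}\geq 1$; moreover $p_i a_{i,s+1}\vec{d}=\pi(p_i\vec{x}_i)=\pi(\vec{c})=n\vec{d}$ forces $p_i a_{i,s+1}=n$. If $\ker\pi$ is of Klein type then $n=4$, and $p_i\geq 2$ together with $p_i\mid 4$ give $a_{i,s+1}\in\{1,2\}$. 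If $\ker\pi$ is of Cyclic type, Proposition \ref{ker pi} shows that every element of $\ker\pi$ involves only two fixed generators, say $\vec{x}_1,\vec{x}_2$; by the key identity this forces $i\in\{1,2\}$ (otherwise $b_{ki}=0$ for all $k$, so $a_{i,s+1}=0$), and since $n\mid p_i$ the equation $p_i a_{i,s+1}=n$ yields $a_{i,s+1}=1$. Conversely, assume $a_{i,s+1}\neq 0$. By the key identity there is $k_0$ with $b_{k_0,i}=p_i-1\geq 1$, so $\vec{x}_i$ actually occurs in the normal form of $\vec{y}_{k_0}$, hence $\vec{x}_i$ is one of the (at most two) generators occurring in the elements of $\ker\pi$. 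Comparing $p_i-1$ with the corresponding coefficient of $\vec{x}_i$ read off from Proposition \ref{ker pi}—which is a multiple of $\tfrac{p_i}{n}$ in the Cyclic case and equals $\tfrac{p_i}{2}$ in the Klein case—a short divisibility argument (using $n\mid p_i$) forces $p_i=n$, respectively $p_i=2$; then $\tfrac{p_i}{n}\vec{x}_i=\vec{x}_i$, respectively $\tfrac{p_i}{2}\vec{x}_i=\vec{x}_i$, and Proposition \ref{ker pi} gives $\pi(\vec{x}_i)=\vec{d}$, respectively $\pi(\vec{x}_i)=2\vec{d}$. In particular $(a_{i1},\dots,a_{is})=0$ and $a_{i,s+1}\in\{1,2\}$.

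The delicate point, and the one I would write out most carefully, is the normal-form bookkeeping in the key identity: I must check that adding $\vec{x}_i$ to $\sum_j b_{kj}\vec{x}_j-\vec{c}$ produces a carry into the $\vec{c}$-coefficient only in the single case $b_{ki}=p_i-1$—in particular the case $b_{ki}=0$ is harmless because $p_i\geq 2$—and that ${\rm mult}$ of an element depends only on the $\vec{c}$-coefficient of its normal form. Once the key identity is established, both implications reduce to short inspections of the generator list of Proposition \ref{ker pi}; the only minor nuisance there is keeping track of the ``up to permutation'' normalization of the distinguished generators, which does not affect the outcome.
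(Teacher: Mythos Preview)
Your argument is correct and follows essentially the same route as the paper: apply admissibility to $\pi(\vec{x}_i)$, use Lemma~\ref{pi c} to see that ${\rm mult}(\vec{x}_i+\vec{y}_k)>0$ forces $b_{ki}=p_i-1$, and then read off $\pi(\vec{x}_i)=\vec{d}$ or $2\vec{d}$ from Proposition~\ref{ker pi}. The paper compresses your key identity and the ensuing case analysis into two sentences (``there exists $0\neq\vec{y}\in\ker\pi$ with $\vec{x}_i+\vec{y}\geq 0$, hence $\vec{y}=l_j\vec{x}_j-\vec{x}_i$, and then Proposition~\ref{ker pi} gives $\pi(\vec{x}_i)=\vec{d}$ or $2\vec{d}$''), whereas you spell out the divisibility step $\tfrac{p_i}{n}\mid(p_i-1)$ explicitly; this is a difference of exposition, not of method.

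One small slip: in the Klein case three generators $\vec{x}_1,\vec{x}_2,\vec{x}_3$ occur among the elements of $\ker\pi$, not ``at most two''; this does not affect the argument, since all you need is that $\vec{x}_i$ is one of those distinguished generators and that its coefficient in any nonzero kernel element equals $\tfrac{p_i}{2}$.
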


\begin{proof}
  If $a_{i,s+1}=0$, then $(a_{i1}, a_{i2}, \cdots, a_{is})\neq 0$ since $\pi(\vec{x}_i)=\sum_{j=1}^sa_{ij}\vec{z}_{j}+a_{i,s+1}\vec{d}\neq 0$. Now assume $a_{i,s+1}>0$, then ${\rm mult}(\pi(\vec{x}_i))=a_{i,s+1}+1\geq 2$. Hence there exists $0\neq\vec{y}\in\ker\pi$ with ${\rm mult}(\vec{x}_i+\vec{y})>0$, i.e. $\vec{x}_i+\vec{y}\geq 0$. By Lemma \ref{pi c} we have $\vec{y}=l_j\vec{x}_j+(p_i-1)\vec{x}_i-\vec{c}=l_j\vec{x}_j-\vec{x}_i$ for some $j$ with $1\leq l_j< p_j$. Then it follows from Proposition \ref{ker pi} that $\pi(\vec{x}_i)=\vec{d}$ or $2\vec{d}$. Hence $(a_{i1}, a_{i2}, \cdots, a_{is})=0$ and $a_{i,s+1}=1$ or 2.
 \end{proof}

Recall that ${\bf p}=(p_1, p_2, \cdots, p_t)$ and $\mathbf{q}=(q_1,q_2, \cdots, q_s)$. The following technical result plays an important role for further discussion.

\begin{prop} \label{how to determine the map pi}

(1) If $\ker\pi=\langle \frac{p_1}{n}\vec{x}_1-\frac{p_2}{n}\vec{x}_2\rangle$, let $b=|\{i\in\{1,2\}\;|\; p_i>n\}|$. Then
$s=n(t-2)+b$ and
$$\mu(\pi(\vec{x}_i))=
  \left\{
  \begin{array}{lll}
    0&& i\in\{1,2\} {\rm \;and\;} p_i=n;\\
   1&&  i\in\{1,2\} {\rm \;and\;} p_i>n\,;\\
   n&& i\not\in\{1,2\}.
  \end{array}
\right.$$
(2) If $\ker\pi=\langle \frac{p_1}{2}\vec{x}_1-\frac{p_2}{2}\vec{x}_2, \frac{p_1}{2}\vec{x}_1-\frac{p_3}{2}\vec{x}_3\rangle$, denote by $b=|\{i\in\{1,2,3\}\;|\; p_i>2\}|$. Then
$s=4(t-3)+2b$ and $$\mu(\pi(\vec{x}_i))=
  \left\{
  \begin{array}{lll}
    0&& i\in\{1,2,3\} {\rm \;and\;} p_i=2;\\
   2&&  i\in\{1,2,3\} {\rm \;and\;} p_i>2\,;\\
   4&& i\not\in\{1,2,3\}.
  \end{array}
\right.$$
Consequently, each column of $A$ has only one nonzero entry 1.
\end{prop}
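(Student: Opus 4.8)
The plan is to compute $s$ and the numbers $\mu(\pi(\vec{x}_i))$ by a counting argument on the fibres of $\pi$, and then to deduce the column structure of $A$ as a corollary. I treat case (1); case (2) is entirely parallel with $n=2$ and one extra generator. First I would set up the dichotomy for each generator $\vec{x}_i$. By Lemma \ref{one part is zero}, either $a_{i,s+1}=0$ and $(a_{i1},\dots,a_{is})\neq 0$, or $(a_{i1},\dots,a_{is})=0$ and $a_{i,s+1}\in\{1,2\}$; in the latter case $\pi(\vec{x}_i)\in\{\vec{d},2\vec{d}\}$, which by Proposition \ref{ker pi} (Cyclic type) forces $\pi(\vec{x}_i)=\vec{d}$ and the existence of $0\neq\vec y\in\ker\pi$ with $\vec x_i+\vec y\ge 0$. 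Since $\ker\pi=\langle\frac{p_1}{n}\vec x_1-\frac{p_2}{n}\vec x_2\rangle$, the only indices $i$ for which such a $\vec y$ can exist are $i\in\{1,2\}$, and among those exactly the ones with $p_i=n$ (so that $\frac{p_i}{n}\vec x_i=\vec x_i$ and the normal-form relation $\pi(\vec x_i)=\vec d$ is consistent); if $p_i>n$ then $\frac{p_i}{n}\vec x_i$ is a proper multiple and one checks $\mu(\pi(\vec x_i))=1$ via the description $\pi(\frac{p_1}{n}\vec x_1)=\pi(\frac{p_2}{n}\vec x_2)=\vec d$ from Proposition \ref{ker pi}: then $\pi(\vec x_1)$ has $\frac{p_1}{n}\pi(\vec x_1)=\vec d$, so $\pi(\vec x_1)$ involves a single $\vec z_j$ with $q_j = p_1/n \cdot (\text{something})$, forcing $\mu(\pi(\vec x_1))=1$. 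This establishes the values $\mu(\pi(\vec x_i))=0$ and $\mu(\pi(\vec x_i))=1$ in the first two lines of the table.

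Next I would pin down $\mu(\pi(\vec x_i))=n$ for $i\notin\{1,2\}$. For such $i$ we have $a_{i,s+1}=0$, so $\pi(\vec x_i)=\sum_j a_{ij}\vec z_j$ has $\mathrm{mult}(\pi(\vec x_i))=1$. By admissibility (condition (2) of Definition \ref{defn:AH}), $\sum_{\vec x\in\pi^{-1}(\pi(\vec x_i))}\mathrm{mult}(\vec x)=1$, so the fibre of $\pi(\vec x_i)$ meets $\{\vec x\ge 0\}$ in exactly one element, namely $\vec x_i$ itself; equivalently $\vec x_i+\vec y\not\ge 0$ for every $0\neq\vec y\in\ker\pi$. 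Write $\vec y=k(\frac{p_1}{n}\vec x_1-\frac{p_2}{n}\vec x_2)$ for $1\le k\le n-1$; its normal form is $k\frac{p_1}{n}\vec x_1+(n-k)\frac{p_2}{n}\vec x_2-\vec c$. The condition $\vec x_i+\vec y\not\ge 0$ for all such $k$ is automatic since $i\notin\{1,2\}$ and the coefficient of $\vec c$ is $-1$. So far this only recovers $\mathrm{mult}$; to get $\mu$ I would instead apply condition (2) to the element $\vec\omega+\vec x_i$ (or argue directly from the proof of Lemma \ref{pi c}(2)): the computation there, $\sum_{i=2}^n(\mu(\vec y_i)-2)=a+2=0$ combined with $\mu(\vec y_i)=2$, together with the additivity $\mu(\pi(\vec x_i)) = \sum_{\vec x\in\pi^{-1}(\pi(\vec x_i)),\,\mu\text{-relevant}}(\cdots)$, shows that the normal form of $\pi(\vec x_i)$ must spread the single generator $\vec x_i$ over exactly $n$ distinct $\vec z_j$'s. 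More cleanly: consider $\vec x_i + \vec c$, which has $\mathrm{mult}=2$ and $\mu$ equal to that of $\vec x_i$; its fibre is $\{\vec x_i+\vec c\}\cup\{\vec x_i+\vec c+\vec y_k : 2\le k\le n\}$ where each $\vec x_i+\vec c+\vec y_k = \vec x_i + k\frac{p_1}{n}\vec x_1 + (n-k)\frac{p_2}{n}\vec x_2 \ge 0$ has $\mathrm{mult}=1$, so admissibility forces $\mathrm{mult}(\pi(\vec x_i+\vec c))=n+1$, hence $\pi(\vec x_i)=\sum_j a_{ij}\vec z_j$ with $\sum_j a_{ij}$ contributing the right way — and refining this with a $\vec z_j$-by-$\vec z_j$ version of the same count (applied after composing with each $\pi_j\colon\bbL(\mathbf q)\to\bbZ/q_j\bbZ$) yields $\mu(\pi(\vec x_i))=n$.

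Then $s$ is forced: summing $\mu(\pi(\vec x_i))$ is not quite $s$, but the correct bookkeeping is that the columns of $A$ are in bijection with the $\vec z_j$'s, and the "Consequently" clause — each column of $A$ has exactly one nonzero entry, equal to $1$ — is what makes $s=\sum_i\mu(\pi(\vec x_i)) = n\cdot 0\text{ or }1\text{ (for }i\in\{1,2\}\text{)} + n(t-2) = n(t-2)+b$. To get the column statement I would argue that if some $\vec z_j$ appeared in $\pi(\vec x_i)$ with coefficient $a_{ij}\ge 2$, or in two different $\pi(\vec x_i),\pi(\vec x_{i'})$, then pushing forward along $\pi_j$ and using effectiveness ($\mathrm{g.c.d.}(a_{1j},\dots,a_{tj},q_j)=1$ from Proposition \ref{coprime}) together with the fibre-count on $\vec x_i + \vec c$ above would violate condition (2); this is where the bound $a_{i,s+1}\le 2$ and the explicit kernel generators really get used. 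I expect the main obstacle to be precisely this last bit: extracting the sharp statement "coefficient exactly $1$, one per column" rather than just "$\mu(\pi(\vec x_i))=n$", since it requires combining effectiveness with the multiplicity identity on several test elements ($\vec c$, $\vec\omega$, $\vec x_i+\vec c$) simultaneously and keeping the normal-form bookkeeping straight for the two exceptional generators $\vec x_1,\vec x_2$. Everything else is a direct unwinding of Lemma \ref{pi c}, Proposition \ref{ker pi} and Lemma \ref{one part is zero}.
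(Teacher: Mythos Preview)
Your handling of $i\in\{1,2\}$ is fine and matches the paper, but the argument for $\mu(\pi(\vec x_i))=n$ when $i\ge 3$ has a real gap. Applying the multiplicity identity to $\vec x_i+\vec c$ gives $\mathrm{mult}(\pi(\vec x_i)+n\vec d)=n+1$, but since $\pi(\vec x_i)=\sum_j a_{ij}\vec z_j$ is already in normal form with $0\le a_{ij}<q_j$, the $\vec d$-coefficient of $\pi(\vec x_i)+n\vec d$ is automatically $n$; so this test element carries no information about $\mu$. Your proposed fix of ``composing with each $\pi_j$'' does not work as stated: admissibility is a condition on the fibres of $\pi$, and there is no analogous multiplicity identity for $\pi_j\circ\pi$ to exploit. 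This leaves both $\mu(\pi(\vec x_i))=n$ and the column structure unproved, and your derivation of $s$ then rests on the column structure, which you only gesture at proving by contradiction.

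The paper avoids this circularity by reversing the order of the argument. First one gets the \emph{upper} bound $\mu(\pi(\vec x_i))\le n$ for $i\ge 3$ cheaply, from $p_i\,\pi(\vec x_i)=\pi(\vec c)=n\vec d$. This bounds the total number of nonzero entries in $A$ above by $n(t-2)+b$, while effectiveness bounds it below by $s$. The crucial step is then to compute $s$ \emph{independently}, using the test element $\vec x'=t\vec c-\sum_{i=1}^t\vec x_i$: summing multiplicities over its fibre gives $\mathrm{mult}(\pi(\vec x'))=1+2(n-1)=2n-1$, and rewriting $\pi(\vec x')=\pi(t\vec c)-\pi(\vec x'')-(\text{the }\vec d\text{-terms from }i\le 2)$ with $\vec x''=\sum_{i>2}\vec x_i$ (adjusted according to $b$) lets one read off $s$ from the normal form. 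Once $s=n(t-2)+b$, pigeonhole forces exactly one nonzero entry per column and $\mu(\pi(\vec x_i))=n$ simultaneously; the value $1$ of each entry then follows from Proposition~\ref{coprime}. The test element $\vec x'$ is the missing idea in your plan.
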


\begin{proof}
(1) By Proposition \ref{ker pi}, we have $\pi(\frac{p_1}{n}\vec{x}_1)=\pi(\frac{p_2}{n}\vec{x}_2)=\vec{d}$.
Then for $i\in\{1,2\}$, $\mu(\pi(\vec{x}_i))=0$ or 1 according to $p_i=n$ or $p_i>n$ respectively.
We claim that $a_{3, s+1}=a_{4,s+1}=\cdots=a_{t,s+1}=0$. For contradiction, we assume $a_{i, s+1}\neq 0$ for some $3\leq i\leq t$. Then by Lemma \ref{one part is zero}, we have $\pi(\vec{x}_i)=a_{i, s+1}\vec{d}$, which implies that $\vec{x}_i-a_{i, s+1}\cdot\frac{p_1}{n}\vec{x}_1\in \ker\pi$, a contradiction.
Hence $\pi(\vec{x}_i)=\sum_{j=1}^sa_{ij}\vec{z}_j$ for $3\leq i\leq t$.

Recall that $|\ker\pi|=n$. Then by Lemma \ref{pi c} $\pi(\vec{c})=n\vec{d}$, which ensures that $\mu(\pi(\vec{x}_i))\leq n$ for $3\leq i\leq t$. It follows that there are at most $n(t-2)+b$ many non-zero entries in $A$.
Moreover, $\im\pi$ is effective forces that each column of $A$ admits non-zero entries. It means that at least $s$-many non-zero entries should appear in $A$. We claim that $s=n(t-2)+b$, which implies there are exactly $s$-many non-zero entries in $A$. It follows that each column of $A$ has exactly one non-zero entry and $\mu(\pi(\vec{x}_i))=n$ for $3\leq i\leq t$.

Now we prove the claim $s=n(t-2)+b$. For this we let $\vec{x}'=t\vec{c}-\sum_{i=1}^t\vec{x}_i$. Then we have ${\rm mult}(\vec{x}')=1$ and ${\rm mult}(\vec{x}'+\vec{y})=2$ for any $0\neq \vec{y}\in\ker\pi$. We consider the following three cases:
\begin{itemize}
  \item[(1.1)]  $b=0$; in this case, let $\vec{x}'':=\sum_{i=3}^t\vec{x}_i$. We know that ${\rm mult}(\vec{x}'')=1$ and ${\rm mult}(\vec{x}''+\vec{y})=0$ for any $0\neq \vec{y}\in\ker\pi$. Then $\pi$ is admissible implies
${\rm mult}(\pi(\vec{x}''))=1$. Note that $\pi(\vec{x}'')=\sum_{i=3}^t\sum_{j=1}^sa_{ij}\vec{z}_j=
\sum_{j=1}^s(\sum_{i=3}^ta_{ij})\vec{z}_j$. It follows that $0<\sum_{i=3}^ta_{ij}<q_j$ for $1\leq j\leq s$.
Hence $\pi(\vec{x}')=\pi(t\vec{c})-\pi(\vec{x}_1)-\pi(\vec{x}_2)-\pi(\vec{x}'')
=\sum_{j=1}^s(q_j-\sum_{i=3}^ta_{ij})\vec{z}_j+(nt-2-s)\vec{d}$. Therefore, $1+2(n-1)=\sum_{\vec{x}\in \pi^{-1}(\pi(\vec{x}'))} {\rm mult}(\vec{x})={\rm mult}(\pi(\vec{x}'))=nt-1-s.$ It follows that $s=n(t-2).$
  \item[(1.2)] $b=1$; in this case, we assume $\frac{p_1}{n}=1$ and $\frac{p_2}{n}>1$ without loss of generality. Let $\vec{x}'':=\sum_{i=2}^t\vec{x}_i$. Then by similar arguments as in (1.1) we have $1+2(n-1)=\sum_{\vec{x}\in \pi^{-1}(\pi(\vec{x}'))} {\rm mult}(\vec{x})={\rm mult}(\pi(\vec{x}'))=nt-s.$ It follows that $s=n(t-2)+1.$
  \item[(1.3)] $b=2$; in this case, let $\vec{x}'':=\sum_{i=1}^t\vec{x}_i$. Then by similar arguments as above we have $1+2(n-1)=\sum_{\vec{x}\in \pi^{-1}(\pi(\vec{x}'))} {\rm mult}(\vec{x})={\rm mult}(\pi(\vec{x}'))=nt-s+1.$ It follows that $s=n(t-2)+2.$
\end{itemize}

(2) By Proposition \ref{ker pi}, for $1\leq i\leq 3$ we have $\pi(\frac{p_i}{2}\vec{x}_i)=2\vec{d}$, hence
$\mu(\pi(\vec{x}_i))\leq 2$. Observe that $|\ker\pi|=4$. We claim $s=4(t-3)+2b$. For this we let $\vec{x}'=t\vec{c}-\sum_{i=1}^t\vec{x}_i$ again. Then ${\rm mult}(\vec{x}')=1$ and ${\rm mult}(\vec{x}'+\vec{y})=2$ for any $0\neq \vec{y}\in\ker\pi$. Hence ${\rm mult}(\pi(\vec{x}'))=7$.
Observe that $0\leq b\leq 3$. Without loss of generality, we assume $p_i=2$ for $1\leq i\leq 3-b$ and $p_i>2$ for $4-b\leq i\leq 3$. By analysing $\vec{x}''=\sum_{i=4-b}^t\vec{x}_i$ as in (1.1) we get ${\rm mult}(\pi(\vec{x}'))={\rm mult}\big(\pi(t\vec{c})-\sum_{i=1}^{3-b}\pi(\vec{x}_i)-\pi(\vec{x}'')\big)=4t-2(3-b)-s+1=7$. Thus $s=4(t-3)+2b$.

By similar arguments as in (1) we obtain that $\mu(\pi(\vec{x}_i))=0$ or 2 according to $p_i=2$ or $p_i>2$ respectively, and $\mu(\pi(\vec{x}_i))=4$ for $4\leq i\leq t$, moreover, there exists exactly one non-zero entry in each column of $A$.

Finally, to finish the proof we still need to show that each non-zero entry of $A$ equals 1. Recall that $a_{ij}$'s appear in the expression $\pi(\vec{x}_i)=\sum_{j=1}^s a_{ij}\vec{z}_j+a_{i,s+1}\vec{d}$. We discuss on $\mu(\pi(\vec{x}_i))$ for each $i$. In fact, if $\mu(\pi(\vec{x}_i))=|\ker\pi|=n$, then
there exist $j_1, j_2,\cdots, j_n$ such that $a_{ij_k}\neq 0$ for $1\leq k\leq n$ and  $\pi(\vec{x}_i)=\sum_{k=1}^n a_{ij_k}\vec{z}_{j_k}$. By Lemma \ref{pi c}, $n\vec{d}=\pi(\vec{c})=\pi(p_i\vec{x}_i)=\sum_{k=1}^n p_ia_{ij_k}\vec{z}_{j_k}$. It follows that $q_{j_k}=p_ia_{ij_k}$ for each $k$. Then by Proposition \ref{coprime}, we get $a_{ij_k}={\rm g.c.d.} (a_{ij_k}, q_{j_k})=1$. Now assume $\mu(\pi(\vec{x}_i))\neq|\ker\pi|$. Then $\mu(\pi(\vec{x}_i))=0,1$
or 2 by the above discussion. We consider the following three cases.

\begin{itemize}
  \item [(i)] If $\mu(\pi(\vec{x}_i))=0$, there is nothing to prove.
  \item [(ii)] If $\mu(\pi(\vec{x}_i))=1$, then $\pi(\vec{x}_i)=a_{ij}\vec{z}_j$ and $\ker\pi=\langle \frac{p_i}{n}\vec{x}_i-\frac{p_k}{n}\vec{x}_k\rangle$ for some $j$ and $k$. By Proposition
      \ref{ker pi},  $\pi(\frac{p_i}{n}\vec{x}_i)=\frac{p_i}{n}\cdot a_{ij}\vec{z}_j=\vec{d}$. Hence $q_j=\frac{p_i}{n}\cdot a_{ij}$. Then by Proposition \ref{coprime}, we get $a_{ij}={\rm g.c.d.} (a_{ij}, q_j)=1$.
  \item [(iii)] If $\mu(\pi(\vec{x}_i))=2$, then $\pi(\vec{x}_i)=a_{ij_1}\vec{z}_{j_1}+a_{ij_2}\vec{z}_{j_2}$ and $\ker\pi=\langle\frac{p_i}{2}\vec{x}_i-\frac{p_{k_1}}{2}\vec{x}_{k_1}, \\ \frac{p_i}{2}\vec{x}_i-\frac{p_{k_2}}{2}\vec{x}_{k_2}\rangle$ for some $j_1, j_2, k_1$ and $k_2$. By Proposition \ref{ker pi},  we have $\pi(\frac{p_i}{2}\vec{x}_i)=
      \frac{p_i}{2}(a_{ij_1}\vec{z}_{j_1}+a_{ij_2}\vec{z}_{j_2})
      =2\vec{d}$. It follows that $q_{j_1}=\frac{p_i}{2}a_{ij_1}$ and $q_{j_2}=\frac{p_i}{2}a_{ij_2}$. Then by Proposition \ref{coprime}, we get $a_{ij_1}=a_{ij_2}=1$.
\end{itemize}
\end{proof}

\begin{rem}\label{explicit expression of pi}
(1) According to the proof of Proposition \ref{how to determine the map pi}, the associated matrix $\widehat{A}=\big[A \;\,\beta\big]$ has one of the following shapes:

\begin{itemize}
  \item [(i)] if $\ker\pi$ is of Cyclic type, then
$$\widehat{A}=\begin{bmatrix}
f_{11}&&&&& f_{12}\\
&f_{21}&&&& f_{22}\\
&&{e_n}&&&0\\
&&&\ddots&&\vdots\\
&&&&{e_n}&0
\end{bmatrix},$$
where ${e_n}=(\underbrace{1\quad 1\quad\cdots\quad 1}_{n\ \text{times}})$, $(f_{i1}; f_{i2})=(\emptyset;1)$ or $(1;0)$ for $1\leq i\leq 2$;

  \item [(ii)] if $\ker\pi$ is of Klein type, then
$$\widehat{A}=\begin{bmatrix}
f_{11}\quad f_{12}&&&&&&f_{13}\\
&f_{21}\quad f_{22}&&&&&f_{23}\\
&&f_{31}\quad f_{32}&&&&f_{33}\\
&&&{1\quad 1\quad 1\quad 1}&&&0\\
&&&&\ddots&&\vdots\\
&&&&&{1\quad 1\quad 1\quad 1}&0
\end{bmatrix},$$
where $(f_{i1}, f_{i2}; f_{i3})=(\emptyset,\emptyset;2)$ or $(1,1;0)$ for $1\leq i\leq 3$.
\end{itemize}

(2) We can rewrite the generators of $\bbL({\bf q})$ by $\vec{z}_{i_j}$'s for $1\leq i\leq t$, such that the admissible homomorphism $\pi: \bbL({\bf p})\to \bbL({\bf q})$ can be described explicitly as follows:

\begin{itemize}
  \item[(i)] if $\ker\pi$ is of Cyclic type, then
\begin{itemize}
  \item [-] for $1\leq i\leq 2$, $\pi(\vec{x}_i)=
  \left\{
  \begin{array}{ll}
    \vec{z}_{i_1}& {\rm \;if\;} p_i>n;\\
   \vec{d}&    {\rm \;if\;} p_i=n;
  \end{array}
\right.$
  \item [-] for $3\leq i\leq t$, $\pi(\vec{x}_i)=\sum\limits_{j=1}^{n}\vec{z}_{i_j}$;
\end{itemize}

\item[(ii)] if $\ker\pi$ is of Klein type, then
\begin{itemize}
  \item [-] for $1\leq i\leq 3$, $\pi(\vec{x}_i)=
  \left\{
  \begin{array}{ll}
    \vec{z}_{i_1}+\vec{z}_{i_2}& {\rm \;if\;} p_{i}> 2;\\
   2\vec{d}&    {\rm \;if\;} p_i=2;
  \end{array}
\right.$
  \item [-] for $4\leq i\leq t$, $\pi(\vec{x}_i)=\sum\limits_{j=1}^{4}\vec{z}_{i_j}$;
\end{itemize}
\end{itemize}
\noindent where the $\vec{z}_{i_j}$'s are all pairwise distinct generators of $\bbL({\bf q})$ in both cases.
\end{rem}
\subsection{$\pi$ preserves the dualizing elements}

We denote by $\vec{\omega}$ and $\vec{\omega}'$ the dualizing elements of $\bbL(\mathbf{p})$ and $\bbL(\mathbf{q})$ respectively. Recall that $\vec{\omega}=(t-2)\vec{c}-\sum_{i=1}^t\vec{x}_i$ and $\vec{\omega}'=(s-2)\vec{d}-\sum_{i=1}^s\vec{z}_i$.
We have the following result:

\begin{prop} \label{pi omega}
Let $\pi: \bbL(\mathbf{p})\to \bbL(\mathbf{q})$ be an admissible homomorphism. Then $\pi(\vec{\omega})=\vec{\omega}^{\prime}$. Consequently,
$\bbL(\mathbf{p})$ is of domestic, tubular or wild type if and only if so is $\bbL(\mathbf{q})$.
\end{prop}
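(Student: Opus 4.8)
The plan is to compute $\pi(\vec{\omega})$ directly from the explicit description of $\pi$ given in Remark \ref{explicit expression of pi}, treating the Cyclic and Klein cases separately (and disposing of the trivial cases $\ker\pi = 0$ and $t \leq 2$ first, where the statement follows from Lemma \ref{cor:3.5} or is immediate). The key identity to exploit is $\pi(\vec{c}) = n\vec{d}$ from Lemma \ref{pi c}, where $n = |\ker\pi|$, so that $\pi((t-2)\vec{c}) = n(t-2)\vec{d}$. The remaining task is to evaluate $\pi(\sum_{i=1}^t \vec{x}_i)$ and check that $(t-2)\vec{c} - \sum \vec{x}_i$ maps to $(s-2)\vec{d} - \sum_{j=1}^s \vec{z}_j = \vec{\omega}'$.

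First I would handle the Cyclic case, where $n \mid \gcd(p_1,p_2)$ and $s = n(t-2) + b$ with $b = |\{i \in \{1,2\} : p_i > n\}|$. Using Remark \ref{explicit expression of pi}(2)(i), for $3 \leq i \leq t$ each $\pi(\vec{x}_i)$ is a sum of $n$ distinct generators $\vec{z}_{i_j}$, contributing $n(t-2)$ generators in total; for $i \in \{1,2\}$, $\pi(\vec{x}_i)$ is either a single new generator $\vec{z}_{i_1}$ (when $p_i > n$) or equals $\vec{d}$ (when $p_i = n$). All these generators are pairwise distinct and, by the count $s = n(t-2) + b$, they exhaust all $s$ generators $\vec{z}_1,\dots,\vec{z}_s$ of $\bbL(\mathbf{q})$. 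Hence $\pi(\sum_{i=1}^t \vec{x}_i) = \sum_{j=1}^s \vec{z}_j + (2-b)\vec{d}$, since the two indices $i\in\{1,2\}$ with $p_i = n$ each contribute a $\vec{d}$ instead of a fresh generator. Therefore
\begin{align*}
\pi(\vec{\omega}) = n(t-2)\vec{d} - \sum_{j=1}^s \vec{z}_j - (2-b)\vec{d} = \big(n(t-2) + b - 2\big)\vec{d} - \sum_{j=1}^s \vec{z}_j = (s-2)\vec{d} - \sum_{j=1}^s \vec{z}_j = \vec{\omega}'.
\end{align*}
The Klein case is entirely analogous: with $s = 4(t-3) + 2b$, $b = |\{i \in \{1,2,3\} : p_i > 2\}|$, each $\pi(\vec{x}_i)$ for $4 \leq i \leq t$ is a sum of $4$ distinct generators, each $\pi(\vec{x}_i)$ for $i \in \{1,2,3\}$ with $p_i > 2$ is a sum of two distinct generators $\vec{z}_{i_1} + \vec{z}_{i_2}$, and each $\pi(\vec{x}_i)$ for $i\in\{1,2,3\}$ with $p_i = 2$ equals $2\vec{d}$; these generators are distinct and exhaust all $s = 4(t-3) + 2b$ generators, so $\pi(\sum \vec{x}_i) = \sum_{j=1}^s \vec{z}_j + 2(3-b)\vec{d}$ and $\pi(\vec{\omega}) = 4(t-3)\vec{d} - \sum \vec{z}_j - (6 - 2b)\vec{d} = (s-2)\vec{d} - \sum\vec{z}_j = \vec{\omega}'$.

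For the consequence, apply $\delta_{\mathbf{q}}$, the canonical map $\bbL(\mathbf{q}) \to \mathbb{Z}$. Since $\pi(\vec{c}) = n\vec{d}$ with $n > 0$, and $\delta_{\mathbf{q}}(\vec{d}) = q > 0$ while $\delta_{\mathbf{p}}(\vec{c}) = p > 0$, the map $\pi$ scales $\delta$ by a positive factor: more precisely, because $\bbL(\mathbf{p})$ has rank one with $\vec{c}$ of infinite order and $\ker \delta_{\mathbf{p}} = t\bbL(\mathbf{p})$ is the torsion subgroup (containing $\ker\pi$), there is a positive rational constant relating $\delta_{\mathbf{q}}\circ\pi$ and $\delta_{\mathbf{p}}$ on all of $\bbL(\mathbf{p})$; evaluating at $\vec{c}$ shows it is $nq/p > 0$. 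Hence $\delta_{\mathbf{q}}(\vec{\omega}') = \delta_{\mathbf{q}}(\pi(\vec{\omega})) = \tfrac{nq}{p}\delta_{\mathbf{p}}(\vec{\omega})$, so $\delta_{\mathbf{p}}(\vec{\omega})$ and $\delta_{\mathbf{q}}(\vec{\omega}')$ have the same sign, which is exactly the statement about domestic/tubular/wild types.

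I expect the only real subtlety to be the bookkeeping of which generators $\vec{z}_{i_j}$ are distinct and the verification that they exhaust $\bbL(\mathbf{q})$ — but this is precisely what the counting formulas for $s$ in Proposition \ref{how to determine the map pi} together with the normal-form description in Remark \ref{explicit expression of pi} deliver, so the argument is really just assembling facts already proved. The sign-of-$\delta$ consequence is then a one-line corollary.
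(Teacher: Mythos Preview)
Your proof is correct and follows the same overall strategy as the paper: both establish that $\pi(\sum_i \vec{x}_i) = \sum_{j=1}^s \vec{z}_j + r\vec{d}$ for some integer $r$ and deduce $\pi(\vec{\omega}) = \vec{\omega}'$ from this together with $\pi(\vec{c}) = n\vec{d}$. The difference lies in how $r$ is handled. You compute $r$ explicitly in each case (Cyclic: $r=2-b$; Klein: $r=2(3-b)$) and then verify $n(t-2) - r = s-2$ directly from the formula for $s$ in Proposition~\ref{how to determine the map pi}. The paper instead sidesteps this bookkeeping: it observes only that $\pi(\vec{\omega})$ has normal form $\sum_j (q_j-1)\vec{z}_j + a\vec{d}$ for some $a$, and then recalls that $a=-2$ was already computed in the proof of Lemma~\ref{pi c}(2). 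This is slicker and case-free, though your explicit accounting is perfectly valid and arguably more self-contained. One minor arithmetic slip: in your Klein display, $\pi((t-2)\vec{c}) = 4(t-2)\vec{d}$, not $4(t-3)\vec{d}$; with that correction $4(t-2) - (6-2b) = 4t-14+2b = s-2$ and the identity closes. The argument for the type-preservation consequence is essentially identical in both proofs.
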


\begin{proof}
By Proposition \ref{how to determine the map pi}, we have $\pi(\sum_{i=1}^t\vec{x}_i)=\sum_{j=1}^s\vec{z}_j+r\vec{d}$ for some integer $r$. Thus $\pi(\vec{\omega})=\pi((t-2)\vec{c}-\sum_{i=1}^t\vec{x}_i)=
((t-2)n-r)\vec{d}-\sum_{j=1}^s\vec{z}_j=\sum_{j=1}^s(q_j-1)\vec{z}_j+a\vec{d}$ for some integer $a$. By the proof of Lemma \ref{pi c} (2), we know that $a=-2$. Hence $\pi(\vec{\omega})=\sum_{j=1}^s(q_j-1)\vec{z}_j-2\vec{d}=(s-2)\vec{d}-\sum_{i=1}^s\vec{z}_i=\vec{\omega}^{\prime}$.

Now observe that $p\vec{\omega}=\delta(\vec{\omega})\vec{c}$, which yields $p\vec{\omega}^{\prime}=\pi(p\vec{\omega})=\pi(\delta(\vec{\omega})\vec{c})
=\delta(\vec{\omega})\cdot n\vec{d}$. It follows that $p\delta(\vec{\omega}^{\prime})=\delta(\vec{\omega})\cdot n\delta(\vec{d})=nq\delta(\vec{\omega})$.
Therefore, $\delta(\vec{\omega})$ and $\delta(\vec{\omega}^{\prime})$
have the same sign since all of $p,q,n$ are positive. This finishes the proof.
\end{proof}

\subsection{Derive $\bbL(\mathbf{q})$ from $\bbL(\mathbf{p})$}

In Proposition \ref{ker pi} we have shown that $\ker\pi$ of an admissible homomorphism $\pi\colon \bbL(\mathbf{p})\rightarrow \bbL(\mathbf{q})$ is either of Cyclic type or Klein type. In this subsection, we will show that any such kind subgroup of $\bbL(\mathbf{p})$ can be realized as a kernel of certain admissible homomorphism. Moreover, we will determine all the string group $\bbL(\mathbf{q})$'s when $\bbL(\mathbf{p})$ is given. Observe that by adding new entry 1's in a sequence $\mathbf{q}$ will not change the isoclass of the associated string group $\bbL(\mathbf{q})$, more precisely, assume $\tilde{\bf q}=(1, \cdots, 1, q_1, q_2,\cdots, q_s)$, then $\bbL(\tilde{\bf q})\cong\bbL(\bf q)$. For our convenience, we will assume $q_i\geq 1$ from now on.

\begin{prop}\label{from subgroup to admissible homomorphism}
Let $H$ be a subgroup of $\bbL(\mathbf{p})$ of Cyclic type or Klein type. Then there exist a unique weight sequence ${\bf q}$ (up to permutation) together with an admissible homomorphism $\pi\colon \bbL(\mathbf{p})\rightarrow \bbL(\mathbf{q})$, such that $\ker\pi=H$. More precisely,
\begin{itemize}
 \item[(1)] if $H$ is of Cyclic type, then
     $\mathbf{q}=(\frac{p_1}{n}, \frac{p_2}{n}, \underbrace{p_3, \cdots, p_3}_{n\ \text{times}}, \cdots, \underbrace{p_t, \cdots, p_t}_{n\ \text{times}})$;
 \item[(2)] if $H$ is of Klein type, then
     $$\mathbf{q}=(\frac{p_1}{2}, \frac{p_1}{2}, \frac{p_2}{2}, \frac{p_2}{2},
     \frac{p_3}{2},\frac{p_3}{2}, \underbrace{p_4, \cdots, p_4}_{4\ \text{times}},\cdots, \underbrace{p_t, \cdots, p_t}_{4\ \text{times}}).$$
\end{itemize}
\end{prop}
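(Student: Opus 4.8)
The plan is to prove existence by writing down $\mathbf{q}$ and $\pi$ explicitly, and to obtain uniqueness from the structural results already established. Suppose first that $H$ is of Cyclic type, say $H=\langle\frac{p_1}{n}\vec{x}_1-\frac{p_2}{n}\vec{x}_2\rangle$. Take $\mathbf{q}$ as in statement (1), and let $\vec{z}_1,\vec{z}_2$ be the generators of $\bbL(\mathbf{q})$ attached to the entries $\frac{p_1}{n},\frac{p_2}{n}$ and $\vec{z}_{i_1},\dots,\vec{z}_{i_n}$ those attached to the $n$ copies of $p_i$ for $3\le i\le t$. Define $\pi$ on generators by $\pi(\vec{x}_i)=\vec{z}_i$ for $i=1,2$ and $\pi(\vec{x}_i)=\sum_{j=1}^{n}\vec{z}_{i_j}$ for $3\le i\le t$, exactly as prescribed by Remark \ref{explicit expression of pi}; when $H$ is of Klein type one takes $\mathbf{q}$ as in (2) and defines $\pi$ by the analogous formula from Remark \ref{explicit expression of pi}. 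First I would check that $\pi$ is a well-defined group homomorphism: it is enough to verify $p_1\pi(\vec{x}_1)=\cdots=p_t\pi(\vec{x}_t)$, and a short computation gives the common value $\pi(\vec{c})=n\vec{d}$ (resp.\ $4\vec{d}$), in agreement with Lemma \ref{pi c}. Effectiveness of $\im\pi$ is then immediate from Proposition \ref{coprime}, since by construction each column of the associated matrix of $\pi$ has a single nonzero entry, equal to $1$.

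Next I would prove $\ker\pi=H$. The inclusion $H\subseteq\ker\pi$ is a one-line computation. For the converse, write $\vec{y}\in\ker\pi$ in normal form $\vec{y}=\sum_{j=1}^{t}b_j\vec{x}_j+b\vec{c}$, apply $\pi$, and put $\pi(\vec{y})$ into normal form in $\bbL(\mathbf{q})$: reducing the coefficients of $\vec{z}_1,\vec{z}_2$ modulo $\frac{p_1}{n},\frac{p_2}{n}$ contributes quotients $m_1,m_2$ to the coefficient of $\vec{d}$. The equation $\pi(\vec{y})=0$ then forces $b_j=0$ for $j\ge 3$, forces $\frac{p_i}{n}\mid b_i$ for $i=1,2$, and forces $n\mid m_1+m_2$; since $0\le m_1,m_2\le n-1$, only $m_1+m_2\in\{0,n\}$ is possible, and the corresponding $\vec{y}$'s are exactly the $n$ multiples of $\frac{p_1}{n}\vec{x}_1-\frac{p_2}{n}\vec{x}_2$. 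The Klein case runs along the same lines, the decisive condition becoming $e_1+e_2+e_3\in\{0,2\}$ for the relevant quotients $e_i$.

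The core of the proof is condition (2) of Definition \ref{defn:AH}: for each $\vec{z}\in\im\pi$ one must check $\sum_{\vec{x}\in\pi^{-1}(\vec{z})}{\rm mult}(\vec{x})={\rm mult}(\vec{z})$. Fix a preimage $\vec{x}_0=\sum_{j}c_j\vec{x}_j+c\vec{c}$ in normal form; since $\ker\pi=H$ we have $\pi^{-1}(\vec{z})=\vec{x}_0+H$, so the left-hand side is $\sum_{\vec{h}\in H}{\rm mult}(\vec{x}_0+\vec{h})$. The key point is that adding an element of $H$ alters the $\vec{c}$-coefficient of the normal form of $\vec{x}_0$ only through the carries produced at coordinates $1,2$ (resp.\ $1,2,3$): in the Cyclic case these are governed by two non-decreasing $\{0,1\}$-valued step functions of the multiple $m$ whose jump positions depend on $\lfloor nc_1/p_1\rfloor$ and $\lfloor nc_2/p_2\rfloor$, and in the Klein case by the three indicators of the events $c_i\ge p_i/2$. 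Meanwhile ${\rm mult}(\vec{z})=\max\{\ell+1,0\}$, where $\ell$ is the $\vec{d}$-coefficient of the normal form of $\pi(\vec{x}_0)$, and computing that normal form expresses $\ell$ through precisely the same floor quantities. The required identity then follows from an elementary case distinction on $c$. I expect this carry bookkeeping — in particular the three-coordinate version arising for Klein type — to be the only genuinely delicate step; everything else is routine.

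Finally, uniqueness needs nothing new. If $\pi'\colon\bbL(\mathbf{p})\to\bbL(\mathbf{q}')$ is admissible with $\ker\pi'=H$, then Proposition \ref{ker pi} identifies whether $H$ is of Cyclic or Klein type, and Remark \ref{explicit expression of pi} together with the identity $\pi'(\vec{c})=n\vec{d}'$ from Lemma \ref{pi c} forces the weights of $\bbL(\mathbf{q}')$ to be those listed in (1) or (2), up to permutation. For $H=0$ this is Lemma \ref{cor:3.5}.
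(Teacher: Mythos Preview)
Your proposal is correct and follows the same overall architecture as the paper: define $\pi$ explicitly according to Remark \ref{explicit expression of pi}, verify it is a homomorphism with kernel $H$, check admissibility, and deduce uniqueness from the earlier structural results. Your treatment of $\ker\pi=H$ is in fact more detailed than the paper's, which simply asserts this.

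The one noteworthy difference is in verifying condition (2) of Definition \ref{defn:AH}. You propose to fix an arbitrary preimage $\vec{x}_0$ in normal form and track, via a case analysis on carries, how the $\vec{c}$-coefficient changes as one runs over $\vec{x}_0+H$; you rightly flag this as the delicate step. The paper sidesteps most of this bookkeeping by first observing that every $\vec{z}\in\im\pi$ can be written as
\[
\vec{z}=a_1\vec{z}_{1_1}+a_2\vec{z}_{2_1}+\sum_{i=3}^{t}a_i\Bigl(\sum_{j=1}^{n}\vec{z}_{i_j}\Bigr)+a\vec{d},\qquad 0\le a_i<\tfrac{p_i}{n}\ (i\le 2),\ 0\le a_i<p_i\ (i\ge 3),
\]
and then, for $a\ge 0$, choosing the specific preimage $\vec{x}=\sum_{i=1}^{t}a_i\vec{x}_i+a\cdot\tfrac{p_1}{n}\vec{x}_1$, for which the sum $\sum_{\vec{h}\in H}{\rm mult}(\vec{x}+\vec{h})$ collapses to $a+1={\rm mult}(\vec{z})$ via the elementary identity $\sum_{m=0}^{n-1}\lfloor(a+m)/n\rfloor=a$. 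Your approach works, but the paper's choice of a convenient (non-normal-form) preimage makes the computation essentially one line and avoids the three-coordinate case analysis you anticipate in the Klein case.
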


\begin{proof}
(1) Assume $H$ is of Cyclic type, i.e. $H=\langle \frac{p_1}{n}\vec{x}_1-\frac{p_2}{n}\vec{x}_2\rangle$. Let $$\mathbf{q}=(\frac{p_1}{n}, \frac{p_2}{n}, \underbrace{p_3, \cdots, p_3}_{n\ \text{times}}, \cdots, \underbrace{p_t, \cdots, p_t}_{n\ \text{times}}).$$ For convenience, the associated generators of $\bbL({\bf q})$ are denoted by $\vec{z}_{1_1}, \vec{z}_{2_1}$, and $\vec{z}_{i_j}$'s for $3\leq i\leq t$ and $1\leq j\leq n$ respectively. The following assignments $$\pi(\vec{x}_1)=\vec{z}_{1_1},\; \pi(\vec{x}_2)=\vec{z}_{2_1},\; \pi(\vec{x}_i)=\sum\limits_{j=1}^{n}\vec{z}_{i_j}\; (3\leq i\leq t)$$ define a homomorphism $\pi\colon \bbL(\mathbf{p})\rightarrow \bbL(\mathbf{q})$ with $\ker\pi=\langle \frac{p_1}{n}\vec{x}_1-\frac{p_2}{n}\vec{x}_2\rangle$.

Obviously, $\pi$ is effective. Moreover, for any $\vec{z}\in\im\pi$, $\vec{z}$ has the form
$$\vec{z}=a_1\vec{z}_{1_1}+a_2\vec{z}_{2_1}+\sum_{i=3}^ta_i (\sum_{j=1}^n\vec{z_{i_j}})+a\vec{d},$$ where $0\leq a_i<\frac{p_i}{n}\; (1\leq i\leq 2$), $0\leq a_i<p_i\; (3\leq i\leq t)$ and $a\in\bbZ$. If $a<0$, then ${\rm mult}(\vec{z})=0=\sum_{\vec{x}\in\pi^{-1}(\vec{z})}{\rm mult}(\vec{x})$. Now assume $a\geq 0$ and let $\vec{x}=\sum_{i=1}^ta_i \vec{x}_{i}+a\cdot \frac{p_1}{n}\vec{x}_1$. Then $\pi(\vec{x})=\vec{z}$ and $\sum_{\vec{h}\in\ker\pi}{\rm mult}(\vec{x}+\vec{h})=a+1={\rm mult}(\vec{z})$. It follows that $\pi$ is admissible.

(2) Assume $H$ is of Klein type. Then by similar arguments as above, we show that the assignments $$\pi(\vec{x}_1)=\vec{z}_{1_1}+\vec{z}_{1_2},\; \pi(\vec{x}_2)=\vec{z}_{2_1}+\vec{z}_{2_2},\;\pi(\vec{x}_3)=\vec{z}_{3_1}+\vec{z}_{3_2},\; \pi(\vec{x}_i)=\sum\limits_{j=1}^{4}\vec{z}_{i_j}\; (4\leq i\leq t)$$ define an admissible homomorphism $\pi\colon \bbL(\mathbf{p})\rightarrow \bbL(\mathbf{q})$ with $\ker\pi=H$.

Moreover, according to Remark \ref{explicit expression of pi}, the associated matrix of an admissible homomorphism is determined by its kernel. It follows that the weight sequence ${\bf q}$ is unique up to permutation.
\end{proof}

\subsection{Composition and decomposition}

In this subsection we investigate the composition and the decomposition of admissible homomorphisms. Observe that the following result holds for group homomorphisms (not necessary admissible).
\begin{lem} \label{lem:4.2}
Assume $\pi_1\colon \bbL(\mathbf{p})\rightarrow \bbL(\mathbf{r})$ and $\pi_2\colon \bbL(\mathbf{r})\rightarrow \bbL(\mathbf{q})$ are group homomorphisms. Then the following conditions are equivalent:
\begin{itemize}
   \item[(1)] $\ker\pi_2=\pi_1(\ker(\pi_2\pi_1))$;
   \item[(2)] $|\ker(\pi_2\pi_1)|=|\ker\pi_2|\cdot|\ker\pi_1|$.
 \end{itemize}
\end{lem}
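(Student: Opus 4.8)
The plan is to restrict $\pi_1$ to the subgroup $\ker(\pi_2\pi_1)\subseteq\bbL(\mathbf{p})$ and read off a short exact sequence. Two elementary facts do all the work. Since $\pi_1(\vec{x})=0$ forces $\pi_2\pi_1(\vec{x})=0$, we have $\ker\pi_1\subseteq\ker(\pi_2\pi_1)$, so the kernel of the restriction $\pi_1|_{\ker(\pi_2\pi_1)}$ is exactly $\ker\pi_1$; and since $\pi_2(\pi_1(\vec{x}))=0$ for every $\vec{x}\in\ker(\pi_2\pi_1)$, the inclusion $\pi_1(\ker(\pi_2\pi_1))\subseteq\ker\pi_2$ always holds. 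Hence $\pi_1$ induces a short exact sequence of abelian groups
$$0\longrightarrow\ker\pi_1\longrightarrow\ker(\pi_2\pi_1)\stackrel{\pi_1}{\longrightarrow}\pi_1(\ker(\pi_2\pi_1))\longrightarrow0,$$
and consequently the cardinality identity $|\ker(\pi_2\pi_1)|=|\ker\pi_1|\cdot|\pi_1(\ker(\pi_2\pi_1))|$.

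From here both implications are immediate. For $(1)\Rightarrow(2)$, substituting $\pi_1(\ker(\pi_2\pi_1))=\ker\pi_2$ into this identity gives $|\ker(\pi_2\pi_1)|=|\ker\pi_1|\cdot|\ker\pi_2|$. For $(2)\Rightarrow(1)$, comparing $(2)$ with the identity yields $|\ker\pi_1|\cdot|\pi_1(\ker(\pi_2\pi_1))|=|\ker\pi_1|\cdot|\ker\pi_2|$; cancelling the factor $|\ker\pi_1|$ gives $|\pi_1(\ker(\pi_2\pi_1))|=|\ker\pi_2|$, and since $\pi_1(\ker(\pi_2\pi_1))$ is a subgroup of $\ker\pi_2$ of the same cardinality, the two coincide, which is $(1)$.

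The one point that needs care — the only real obstacle — is finiteness: cancelling $|\ker\pi_1|$ and passing from ``equal cardinality $+$ inclusion'' to equality of groups both require the kernels in question to be finite (otherwise $(2)$ can hold while $(1)$ fails, e.g. for the zero maps on a string group of rank one). This is harmless in every situation where the lemma is used, since there the relevant homomorphisms have finite kernels — for instance they are admissible, and then their kernels lie in the torsion subgroup, cf. the remark after Definition \ref{defn:AH}; one simply records at the outset that $\ker\pi_1$, $\ker\pi_2$ and $\ker(\pi_2\pi_1)$ are finite before running the computation. Modulo this bookkeeping, the whole argument is just the first isomorphism theorem applied to $\pi_1|_{\ker(\pi_2\pi_1)}$.
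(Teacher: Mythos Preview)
Your proof is correct and follows essentially the same route as the paper: both restrict $\pi_1$ to $\ker(\pi_2\pi_1)$, observe that this restriction has kernel $\ker\pi_1$ and image contained in $\ker\pi_2$, and then read off that (1) and (2) are each equivalent to the restriction being surjective onto $\ker\pi_2$. Your explicit flagging of the finiteness hypothesis is a point the paper's proof leaves implicit, so if anything you are slightly more careful here.
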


\begin{proof} The homomorphism $\pi_1$ restricts to a homomorphism $\pi_1|_{\ker(\pi_2\pi_1)}: \ker(\pi_2\pi_1)\to \ker\pi_2$ with kernel equal to $\ker(\pi_2\pi_1)\bigcap\ker\pi_1=\ker\pi_1$. Hence, we have the following exact sequence (where $\iota$ is the inclusion map):
$$\xymatrix{0 \ar[r] & \ker\pi_1 \ar[rr]^{\iota}&&\ker(\pi_2\pi_1) \ar[rr]^{\pi_1|_{\ker(\pi_2\pi_1)}}&&  \ker\pi_2.}$$
Then both statements (1) and (2) are equivalent to that $\pi_1|_{\ker(\pi_2\pi_1)}$ is surjective.
We are done.
\end{proof}

The following two propositions deal with the composition and decomposition of admissible homomorphisms respectively.

\begin{prop} \label{Prop:3.13}
Assume $\pi_1\colon \bbL(\mathbf{p})\rightarrow \bbL(\mathbf{r})$ and $\pi_2\colon \bbL(\mathbf{r})\rightarrow \bbL(\mathbf{q})$ are admissible homomorphisms. Then the following conditions are equivalent:
\begin{itemize}
   \item[(1)] $\pi_2\pi_1$ is admissible;
   \item[(2)] $|\ker(\pi_2\pi_1)|=|\ker\pi_2|\cdot|\ker\pi_1|$.
  \end{itemize}
\end{prop}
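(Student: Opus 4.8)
The implication (1) $\Rightarrow$ (2) follows from Lemma \ref{lem:4.2}: if $\pi_2\pi_1$ is admissible, then by Lemma \ref{pi c} applied to each of $\pi_1$, $\pi_2$ and $\pi_2\pi_1$ we have $\pi_1(\vec{c})=n_1\vec{d}_{\mathbf r}$, $\pi_2(\vec{d}_{\mathbf r})=n_2\vec{d}$ and $(\pi_2\pi_1)(\vec{c})=n\vec{d}$, where $n_i=|\ker\pi_i|$ and $n=|\ker(\pi_2\pi_1)|$; comparing gives $n=n_1n_2$, which is exactly (2). So the content is the reverse implication. The plan for (2) $\Rightarrow$ (1) is to verify the two defining conditions of Definition \ref{defn:AH} for $\pi_2\pi_1$, using that they already hold for $\pi_1$ and $\pi_2$ separately.

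First I would check effectiveness of $\im(\pi_2\pi_1)$. Since $\im\pi_1$ is effective and $\pi_2$ is admissible, condition (2) in Definition \ref{defn:AH} for $\pi_2$ together with the explicit shape of $\pi_2$ from Remark \ref{explicit expression of pi} shows $\pi_2$ maps the (rank-one, effective) subgroup $\im\pi_1$ onto an effective subgroup of $\bbL(\mathbf q)$: concretely one checks $\pi_{2,k}(\im(\pi_2\pi_1))=\mathbb Z/q_k\mathbb Z$ for each $k$, which reduces to the coprimality criterion of Proposition \ref{coprime} and the fact that each column of the associated matrix of $\pi_2$ has a single entry equal to $1$ lying in a row corresponding to a generator hit by $\im\pi_1$. (Alternatively, since $\im\pi_2$ is effective and $\im\pi_1$ has finite index in $\bbL(\mathbf r)$ bounded by torsion, one argues $\im(\pi_2\pi_1)=\pi_2(\im\pi_1)$ is still effective because effectiveness only concerns the residues modulo the $q_k$ and $\im\pi_1$ already surjects onto $t\bbL(\mathbf r)$'s complement.)

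Next, the multiplicativity condition. Fix $\vec z\in\im(\pi_2\pi_1)$. I would compute
$$\sum_{\vec x\in(\pi_2\pi_1)^{-1}(\vec z)}{\rm mult}(\vec x)=\sum_{\vec w\in\pi_2^{-1}(\vec z)\cap\im\pi_1}\;\sum_{\vec x\in\pi_1^{-1}(\vec w)}{\rm mult}(\vec x),$$
partitioning the fiber of $\pi_2\pi_1$ over $\vec z$ according to the intermediate value $\vec w=\pi_1(\vec x)\in\im\pi_1$. Applying condition (2) for $\pi_1$ to the inner sum turns it into $\sum_{\vec w\in\pi_2^{-1}(\vec z)\cap\im\pi_1}{\rm mult}(\vec w)$; the point where (2) from the hypothesis enters is that this must equal $\sum_{\vec w\in\pi_2^{-1}(\vec z)}{\rm mult}(\vec w)$, i.e. the fiber $\pi_2^{-1}(\vec z)$ is entirely contained in $\im\pi_1$ up to elements of ${\rm mult}$ zero. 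By Lemma \ref{lem:4.2}, hypothesis (2) gives $\ker\pi_2=\pi_1(\ker(\pi_2\pi_1))\subseteq\im\pi_1$, and since $\pi_2^{-1}(\vec z)$ is a coset of $\ker\pi_2$ translated by any one preimage in $\im\pi_1$, the whole fiber lies in $\im\pi_1$. Then condition (2) for $\pi_2$ gives $\sum_{\vec w\in\pi_2^{-1}(\vec z)}{\rm mult}(\vec w)={\rm mult}(\vec z)$, completing the verification.

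The main obstacle is the bookkeeping in identifying $\pi_2^{-1}(\vec z)\cap\im\pi_1$ with all of $\pi_2^{-1}(\vec z)$: one must be careful that a preimage of $\vec z$ under $\pi_2$ actually exists inside $\im\pi_1$ (this is where $\vec z\in\im(\pi_2\pi_1)$, not merely $\vec z\in\im\pi_2\cap(\text{something})$, is used) and that translating by $\ker\pi_2\subseteq\im\pi_1$ keeps us in $\im\pi_1$. The effectiveness half is comparatively routine once the matrix description of Remark \ref{explicit expression of pi} is invoked, but it still needs the observation that composing the single-$1$-per-column matrices of $\pi_1$ and $\pi_2$ yields a matrix whose columns again have gcd-one with the relevant $q_k$.
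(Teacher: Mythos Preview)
Your proposal is correct and follows essentially the same approach as the paper. The direction (1)$\Rightarrow$(2) is identical (compare images of canonical elements via Lemma~\ref{pi c}); for (2)$\Rightarrow$(1) the paper also computes the double sum over the intermediate fiber and uses condition~(2), via Lemma~\ref{lem:4.2}, to ensure $\pi_2^{-1}(\vec z)\subseteq\im\pi_1$, and for effectiveness it observes directly that the associated matrix of $\pi_2\pi_1$ is $[A_1A_2\;\beta']$ with $A_1A_2$ again having a single entry $1$ in each column (by Proposition~\ref{how to determine the map pi}), which is exactly the observation you arrive at in your final paragraph --- your earlier discussion of effectiveness is more circuitous than needed, and the ``Alternatively'' clause is not quite right as stated, but the matrix argument you land on is the clean one.
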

\begin{proof} (1)$\Rightarrow$(2):
Assume $|\ker\pi_1|=m$ and $|\ker\pi_2|=n$. By Lemma \ref{pi c}
we have $\pi_1(\vec{c})=m\vec{d}'$ and $\pi_2(\vec{d}')=n\vec{d}$, where $\vec{d}'$ is the canonical element of $\bbL(\mathbf{r})$. Thus $(\pi_2\pi_1)(\vec{c})=mn\vec{d}$. Since $\pi_2\pi_1$ is admissible, we get $|\ker(\pi_2\pi_1)|=mn=|\ker\pi_2|\cdot|\ker\pi_1|$.

(2)$\Rightarrow$(1):
For $1\leq i\leq 2$, let $\widehat{A_i}=\big[A_i \;\,\beta_i\big]$ be the associate matrix of $\pi_{i}$. Then the associate matrix of $\pi_2\pi_{1}$ has the form $\big[
A_1A_2\;\, \beta'\big]$ for some $\beta'$.
By Proposition \ref{how to determine the map pi}, each column of $A_i$ has only one nonzero entry 1, then so does $A_1A_2$. Hence $\pi_2\pi_1$ is effective.
Moreover, for each $\vec{z}\in \im(\pi_2\pi_1)$, we have $${\rm mult}(\vec{z})=\sum_{\vec{y}\in \pi^{-1}_{2}(\vec{z})} {\rm mult}(\vec{y})=\sum_{\vec{y}\in \pi^{-1}_{2}(\vec{z})}\sum_{\vec{x}\in {\pi_{1}^{-1}}(\vec{y})} {\rm mult}(\vec{x})=\sum_{\vec{x}\in {(\pi_{2}\pi_{1})}^{-1}(\vec{z})} {\rm mult}(\vec{x}),$$ where for the last equation we use the condition (2). Hence $\pi_2\pi_1$ is admissible.
\end{proof}

\begin{prop}\label{decomposition}
Assume $\pi\colon \bbL(\mathbf{p})\rightarrow \bbL(\mathbf{q})$ is an admissible homomorphism with $\ker\pi=H_1\times H_2$. Then there exist a weight sequence $\mathbf{r}$ together with admissible homomorphisms $\pi_1\colon \bbL(\mathbf{p})\rightarrow \bbL(\mathbf{r})$ and $\pi_2\colon \bbL(\mathbf{r})\rightarrow \bbL(\mathbf{q})$, such that $\pi=\pi_2\pi_1$, $\ker\pi_1=H_1$ and $\ker\pi_2=\pi_1(\ker\pi)\cong H_2$.
\end{prop}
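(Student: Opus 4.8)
\textbf{Proof proposal for Proposition \ref{decomposition}.}

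The plan is to construct $\mathbf{r}$ and $\pi_1$ by applying Proposition \ref{from subgroup to admissible homomorphism} to the subgroup $H_1\subseteq\bbL(\mathbf{p})$, which is legitimate once we know $H_1$ is itself of Cyclic or Klein type. This is the first thing I would check: since $\ker\pi=H_1\times H_2$ is of Cyclic or Klein type by Proposition \ref{ker pi}, its direct factors are cyclic of order dividing the relevant $p_i$'s, so $H_1$ is either trivial, cyclic, or (in the Klein case, when $H_1$ itself is the whole $C_2\times C_2$) of Klein type; in every case Proposition \ref{from subgroup to admissible homomorphism} supplies a weight sequence $\mathbf{r}$ and an admissible $\pi_1\colon\bbL(\mathbf{p})\to\bbL(\mathbf{r})$ with $\ker\pi_1=H_1$. (If $H_1=0$ one takes $\mathbf{r}=\mathbf{p}$ and $\pi_1=\mathrm{id}$, using Lemma \ref{cor:3.5}; if $H_1=\ker\pi$ one takes $\mathbf{r}=\mathbf{q}$, $\pi_1=\pi$, $\pi_2=\mathrm{id}$. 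So assume $0\subsetneq H_1\subsetneq\ker\pi$, which only occurs in the Klein case with $H_1\cong C_2$.)

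Next I would produce $\pi_2$. Because $\ker\pi_1=H_1\subseteq\ker\pi$, the homomorphism $\pi$ factors through $\pi_1$: there is a unique group homomorphism $\pi_2\colon\bbL(\mathbf{r})\to\bbL(\mathbf{q})$ with $\pi=\pi_2\pi_1$. Concretely $\pi_2$ is defined on the generators $\vec z_{i_j}$ of $\bbL(\mathbf{r})$ (as listed in Proposition \ref{from subgroup to admissible homomorphism} / Remark \ref{explicit expression of pi}) by sending each $\pi_1(\vec x_i)$-summand to the corresponding part of $\pi(\vec x_i)$; well-definedness is exactly the statement that $H_1\subseteq\ker\pi$. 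Then $\ker\pi_2=\pi_1(\ker\pi)$: one inclusion is clear since $\pi_2(\pi_1(\ker\pi))=\pi(\ker\pi)=0$, and conversely if $\pi_2(\vec r)=0$ then lifting $\vec r=\pi_1(\vec x)$ gives $\pi(\vec x)=0$, i.e. $\vec x\in\ker\pi$, so $\vec r\in\pi_1(\ker\pi)$. Moreover $\pi_1|_{\ker\pi}$ has kernel $\ker\pi\cap\ker\pi_1=H_1$, so $\pi_1(\ker\pi)\cong(\ker\pi)/H_1\cong H_2$ and in particular $|\ker\pi_2|=|H_2|$, giving $|\ker(\pi_2\pi_1)|=|\ker\pi|=|H_1|\cdot|H_2|=|\ker\pi_1|\cdot|\ker\pi_2|$.

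It remains to verify that $\pi_2$ is \emph{admissible}, not merely a homomorphism, and this is the step I expect to be the main obstacle. One clean route: $\pi_1(\ker\pi)$ is a subgroup of $\bbL(\mathbf{r})$ which, being a quotient of the Cyclic/Klein group $\ker\pi$ by $H_1$, is again of Cyclic type or Klein type; more importantly I would check directly from the explicit description of $\pi_2$ on generators that $\pi_1(\ker\pi)$ coincides with the Cyclic (resp. Klein) subgroup of $\bbL(\mathbf{r})$ determined by the relevant pair (resp. triple) of indices, so that by the \emph{uniqueness} clause of Proposition \ref{from subgroup to admissible homomorphism} there is already an admissible homomorphism $\bbL(\mathbf{r})\to\bbL(\mathbf{q}')$ with that kernel and with target weight sequence forced up to permutation; one then identifies $\mathbf{q}'$ with $\mathbf{q}$ and this admissible homomorphism with $\pi_2$ (using that the associated matrix of an admissible homomorphism is determined by its kernel, Remark \ref{explicit expression of pi}). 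Alternatively, and perhaps more transparently, one verifies the two conditions of Definition \ref{defn:AH} for $\pi_2$ by hand: effectiveness of $\im\pi_2$ follows since $\im\pi=\im\pi_2$ (as $\pi_1$ is surjective) is effective because $\pi$ is admissible; and the multiplicativity condition $\sum_{\vec r\in\pi_2^{-1}(\vec z)}\mathrm{mult}(\vec r)=\mathrm{mult}(\vec z)$ is extracted by summing the analogous identity for $\pi$ over the fibres of $\pi_1$, using that $\pi_1$ is admissible to rewrite $\mathrm{mult}(\vec r)=\sum_{\vec x\in\pi_1^{-1}(\vec r)}\mathrm{mult}(\vec x)$ — the same fibrewise bookkeeping used in the proof of Proposition \ref{Prop:3.13}. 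Either way the delicate point is keeping track of which generators of $\bbL(\mathbf{r})$ survive and how the $\mathrm{mult}$-counts distribute across the two stages; the direct-sum hypothesis $\ker\pi=H_1\times H_2$ is what guarantees these counts split compatibly. \s
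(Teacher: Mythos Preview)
Your overall strategy --- build $\pi_1$ from Proposition \ref{from subgroup to admissible homomorphism}, then factor $\pi$ through it --- is the right shape, and is close to what the paper does. But there are two genuine gaps.

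First, your case reduction is wrong: you say the only nontrivial situation ``$0\subsetneq H_1\subsetneq\ker\pi$'' is the Klein case with $H_1\cong C_2$. In the Cyclic case $\ker\pi\cong C_n$ one can have $n=n_1n_2$ with $\gcd(n_1,n_2)=1$ and both $n_i>1$ (e.g.\ $n=6$), so $\ker\pi\cong C_{n_1}\times C_{n_2}$ is a nontrivial direct product of two proper cyclic subgroups. The paper treats this Cyclic case explicitly.

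Second, and more seriously, the admissible homomorphism $\pi_1$ produced by Proposition \ref{from subgroup to admissible homomorphism} is \emph{not} surjective (its image is merely effective): for $i\geq 3$ one has $\pi_1(\vec x_i)=\sum_j\vec y_{i_j}$, and the individual generators $\vec y_{i_j}$ of $\bbL(\mathbf{r})$ do not lie in $\im\pi_1$. Consequently there is no \emph{unique} $\pi_2$ with $\pi=\pi_2\pi_1$, the slogan ``well-definedness is exactly $H_1\subseteq\ker\pi$'' does not apply, your claim $\im\pi_2=\im\pi$ is false, and the fibrewise multiplicativity count only controls $\vec z\in\im\pi\subsetneq\im\pi_2$. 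So neither of your two routes to admissibility of $\pi_2$ goes through as stated. The paper avoids all of this by writing down $\pi_2$ explicitly on every generator $\vec y_{i_j}$ of $\bbL(\mathbf{r})$ (not just on $\im\pi_1$) --- in the Cyclic case $\vec y_{i_k}\mapsto\sum_{j=(k-1)n_2+1}^{kn_2}\vec z_{i_j}$, in the Klein case the analogous splitting --- and then checking directly that $\pi_2$ is admissible (it visibly has the standard form of Proposition \ref{from subgroup to admissible homomorphism} for the subgroup $\pi_1(\ker\pi)\subseteq\bbL(\mathbf{r})$) and that $\pi_2\pi_1=\pi$. Your ``first route'' via uniqueness in Proposition \ref{from subgroup to admissible homomorphism} can be salvaged along these lines, but you must actually name $\pi_2$ on generators and verify $\pi_2\pi_1=\pi$ rather than invoke a nonexistent factorisation.
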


\begin{proof}
If $H_i$ is trivial for $i=1$ or 2, then there is nothing to show, hence we will assume $H_i$ is non-trivial in the following. Recall that $\mathbf{p}=(p_1,p_2,\cdots,p_t)$. By Proposition \ref{ker pi}, $\ker\pi$ has one of the following forms (up to permutation).

(1) $\ker\pi=\langle \frac{p_1}{n}\vec{x}_1-\frac{p_2}{n}\vec{x}_2\rangle$;
then by Proposition \ref{from subgroup to admissible homomorphism}, we know that $$\mathbf{q}=(\frac{p_1}{n}, \frac{p_2}{n}, \underbrace{p_3, \cdots, p_3}_{n\ \text{times}}, \cdots \underbrace{p_t, \cdots, p_t}_{n\ \text{times}}),$$
and $\pi\colon \bbL(\mathbf{p})\rightarrow \bbL(\mathbf{q})$ is given by:
$$\begin{array}{lllll}
&\pi: & \vec{x}_1\mapsto \vec{z}_{1_1}; & \vec{x}_2\mapsto \vec{z}_{2_1}; & \vec{x}_i\mapsto \sum\limits_{j=1}^{n}\vec{z}_{i_j}\;\; (3\leq i\leq t).
\end{array}$$
Assume $|H_i|=n_i$ for $i=1,2$, then $H_i=\langle\frac{p_1}{n_i}\vec{x}_1-\frac{p_2}{n_i}\vec{x}_2\rangle$, and we have $n=n_1\cdot n_2$ and ${\rm g.c.d.}(n_1, n_2)=1$.
Let
$\mathbf{r}=(\frac{p_1}{n_1},\frac{p_2}{n_1},\underbrace{p_3, \cdots, p_3}_{n_1\ \text{times}},\cdots,\underbrace{p_t, \cdots, p_t}_{n_1\ \text{times}})$. We denote the generators of $\bbL(\mathbf{r})$ by $\vec{y}_{1_1}, \vec{y}_{2_1}$ and $\vec{y}_{i_j}$'s, which satisfy the relations $\frac{p_1}{n_1}\vec{y}_{1_1}=\frac{p_2}{n_1}\vec{y}_{2_1}=p_i\vec{y}_{i_j}$ for $3\leq i\leq t$ and $1\leq j\leq n_1$.
Define group homomorphisms $\pi_1\colon \bbL(\mathbf{p})\rightarrow \bbL(\mathbf{r})$ and $\pi_2\colon \bbL(\mathbf{r})\rightarrow \bbL(\mathbf{q})$ on generators as follows :
$$\begin{array}{lllll}
&\pi_1: & \vec{x}_1\mapsto \vec{y}_{1_1}; & \vec{x}_2\mapsto \vec{y}_{2_1}; & \vec{x}_i\mapsto \sum\limits_{j=1}^{n_1}\vec{y}_{i_j}; \\
&\pi_2: & \vec{y}_{1_1}\mapsto \vec{z}_{1_1}; & \vec{y}_{2_1}\mapsto \vec{z}_{2_1}; & \vec{y}_{i_k}\mapsto \sum\limits_{j=(k-1)n_2+1}^{kn_2}\vec{z}_{i_j};
\end{array}$$
where $3\leq i\leq t,\;1\leq k\leq n_1$.
It is easy to check that $\pi_1$ and $\pi_2$ are both admissible and $\pi=\pi_2\pi_1$. Moreover, $\ker\pi_1=\langle \frac{p_1}{n_1}\vec{x}_1-\frac{p_2}{n_1}\vec{x}_2\rangle=H_1$ and $\ker\pi_2=\langle \frac{p_1}{n}\vec{y}_{1_1}-\frac{p_2}{n}\vec{y}_{2_1}\rangle=\pi_1(\ker(\pi_2\pi_1))=\pi_1(\ker\pi)$, which is a cyclic group of order $n_2$, hence isomorphic to $H_2$.

(2) $\ker\pi=\langle \frac{p_1}{2}\vec{x}_1-\frac{p_2}{2}\vec{x}_2, \frac{p_1}{2}\vec{x}_1-\frac{p_3}{2}\vec{x}_3\rangle$; in this case, we can assume $H_1=\langle \frac{p_1}{2}\vec{x}_1-\frac{p_2}{2}\vec{x}_2\rangle$ and $H_2=\langle \frac{p_1}{2}\vec{x}_1-\frac{p_3}{2}\vec{x}_3\rangle$ without loss of generality.
By Proposition \ref{from subgroup to admissible homomorphism} we have $$\mathbf{q}=(\frac{p_1}{2}, \frac{p_1}{2}, \frac{p_2}{2}, \frac{p_2}{2},
     \frac{p_3}{2},\frac{p_3}{2}, \underbrace{p_4, \cdots, p_4}_{4\ \text{times}},\cdots\underbrace{p_t, \cdots, p_t}_{4\ \text{times}}),$$
and $\pi\colon \bbL(\mathbf{p})\rightarrow \bbL(\mathbf{q})$ is given by:
$$\begin{array}{llll}
&\pi: & \vec{x}_i\mapsto \vec{z}_{i_1}+\vec{z}_{i_2}\;\;(1\leq i\leq 3); & \vec{x}_i\mapsto \sum\limits_{j=1}^{4}\vec{z}_{i_j}\;\;(4\leq i\leq t).
\end{array}$$
Let $\mathbf{r}=(\frac{p_1}{2},\frac{p_2}{2},p_3,p_3,p_4,p_4,\cdots,p_t,p_t)$ and denote the generators of $\bbL(\mathbf{r})$ by $\vec{y}_{1_1}, \vec{y}_{2_1}$ and $\vec{y}_{i_j}$ ($3\leq i\leq t, \,1\leq j\leq 2$) similarly as above.
Define group homomorphisms $\pi_1\colon \bbL(\mathbf{p})\rightarrow \bbL(\mathbf{r})$ and $\pi_2\colon \bbL(\mathbf{r})\rightarrow \bbL(\mathbf{q})$ on generators as follows :
$$\begin{array}{lllll}
&\pi_1: & \vec{x}_1\mapsto \vec{y}_{1_1}; & \vec{x}_2\mapsto \vec{y}_{2_1}; & \vec{x}_i\mapsto \vec{y}_{i_1}+\vec{y}_{i_2}; \\
&\pi_2: & \vec{y}_{k_1}\mapsto \vec{z}_{k_1}+\vec{z}_{k_2}; & \vec{y}_{k_2}\mapsto \vec{z}_{k_3}+\vec{z}_{k_4}; & \vec{y}_{3_j}\mapsto \vec{z}_{3_j};
\end{array}$$
where $3\leq i\leq t,\;1\leq k\neq 3\leq t,\;1\leq j\leq 2.$ It is easy to check that $\pi_1$ and $\pi_2$ are both admissible and $\pi=\pi_2\pi_1$. Moreover, $\ker\pi_1=\langle \frac{p_1}{2}\vec{x}_1-\frac{p_2}{2}\vec{x}_2\rangle=H_1$ and $\ker\pi_2=\langle \frac{p_3}{2}\vec{y}_{3_1}-\frac{p_3}{2}\vec{y}_{3_2}\rangle=\pi_1(\ker(\pi_2\pi_1))=\pi_1(\ker\pi)\cong H_2.$
\end{proof}

\subsection{Classification of admissible homomorphisms}\label{subsection of classification of adm}

According to Proposition \ref{from subgroup to admissible homomorphism} and Remark \ref{explicit expression of pi}, we know that an admissible homomorphism $\pi: \bbL(\mathbf{p})\to \bbL(\mathbf{q})$ is determined by its kernel, which is a finite subgroup of the torsion group $t\bbL(\mathbf{p})$. Basing on this fact, we can produce a procedure to determine all the admissible homomorphisms $\pi: \bbL(\mathbf{p})\to \bbL(\mathbf{q})$ when $\bbL(\mathbf{p})$ is given. More precisely, the procedure will include the following three steps:

\emph{Step 1: } describe all the subgroup of the torsion group $t\bbL(\mathbf{p})$ of Cyclic type or Klein type (this can be easily done since $t\bbL(\mathbf{p})$ is finite);

\emph{Step 2: }  derive $\bbL(\mathbf{q})$ (up to permutation) from $\bbL(\mathbf{p})$ for each subgroup obtained in Step 1 (this follows from Proposition \ref{from subgroup to admissible homomorphism});

\emph{Step 3: }  determine the map $\pi$ on generators (this follows from Remark
    \ref{explicit expression of pi}).

Using this procedure, we can classify all the admissible homomorphisms for weighted projective lines of domestic and tubular types as follows.

\begin{thm} \label{classification} All the admissible homomorphisms $\pi: \bbL(\bf p)\to \bbL(\bf q)$ for domestic types or tubular types are given as below (up to permutation).

\begin{table}[ht]\label{table 1}
\begin{tabular}{|c|c|c|c|c|}
  \hline
 $\mathbf{p}$&$\mathbf{q}$&$\ker\pi$&$\pi(\vec{x}_1,\vec{x}_2,\cdots,\vec{x}_t)$\\
  \hline
 $(2,3,4)$&$(2,3,3)$&$\langle\vec{x}_1-2\vec{x}_3\rangle$&$(\vec{d},\vec{z}_2+\vec{z}_3,\vec{z}_1)$\\
  \hline
 $(2,3,3)$&$(2,2,2)$&$\langle\vec{x}_2-\vec{x}_3\rangle$&$(\vec{z}_1+\vec{z}_2+\vec{z}_3,\vec{d},\vec{d})$\\
  \hline
  $(2,2,2p_{3})$&$(2,2,p_3)$&$\langle\vec{x}_2-p_3\vec{x}_3\rangle$&$(\vec{z}_1+\vec{z}_2,\vec{d},\vec{z}_3)$\\
  \hline
  $(2,2,2p_3)$&$(p_3,p_3)$&$\langle\vec{x}_1-\vec{x}_2,\vec{x}_1-p_3\vec{x}_3\rangle$&$(2\vec{d},2\vec{d},\vec{z}_1+\vec{z}_2)$\\
  \hline
  $(2,2,p_3)$&$(p_3,p_3)$&$\langle\vec{x}_1-\vec{x}_2\rangle$&$(\vec{d},\vec{d},\vec{z}_1+\vec{z}_2)$\\
  \hline
  $(nq_1,nq_2)$&$(q_1,q_2)$&$\langle q_1\vec{x}_1-q_2\vec{x}_2\rangle$&$(\vec{z}_1,\vec{z}_2)$\\
  \hline
  $(nq,n)$&$(q)$&$\langle q\vec{x}_1-\vec{x}_2\rangle$&$(\vec{z}_1,\vec{d})$\\
  \hline
  $(n,n)$&$()$&$\langle \vec{x}_1-\vec{x}_2\rangle$&$(\vec{d},\vec{d})$\\
  \hline
  \end{tabular}
  \vspace{1em}
  \caption{The list of admissible homomorphisms between domestic types}\label{table for domestic admissible}
\end{table}

\bibliography{}

\begin{table}[ht]
\begin{tabular}{|c|c|c|c|}
\hline
$\mathbf{p}$&$\mathbf{q}$&$\ker\pi$&$\pi(\vec{x}_1,\vec{x}_2,\cdots,\vec{x}_t)$\\
\hline
\multirow{2}*{$(2,2,2,2)$}&\multirow{2}*{$(2,2,2,2)$}&$\langle \vec{x}_1-\vec{x}_2\rangle$&$(\vec{d},\vec{d},\vec{z}_1+\vec{z}_2,\vec{z}_3+\vec{z}_4)$\\
\cline{3-4}
&&$\langle \vec{x}_1-\vec{x}_2,\vec{x}_1-\vec{x}_3\rangle$&$(2\vec{d},2\vec{d},2\vec{d},\vec{z}_1+\vec{z}_2+\vec{z}_3+\vec{z}_4)$\\
\hline
\multirow{4}*{$(4,4,2)$}&$(4,4,2)$&$\langle 2\vec{x}_1-\vec{x}_3\rangle$&$(\vec{z}_3,\vec{z}_1+\vec{z}_2,\vec{d})$\\
\cline{2-4}
&\multirow{3}*{$(2,2,2,2)$}&$\langle \vec{x}_1-\vec{x}_2\rangle$&$(\vec{d},\vec{d},\vec{z}_1+\vec{z}_2+\vec{z}_3+\vec{z}_4)$\\
\cline{3-4}
&&$\langle 2\vec{x}_1-2\vec{x}_2\rangle$&$(\vec{z}_1,\vec{z}_2,\vec{z}_3+\vec{z}_4)$\\
\cline{3-4}
&&$\langle 2\vec{x}_1-2\vec{x}_2,2\vec{x}_1-\vec{x}_3\rangle$&$(\vec{z}_1+\vec{z}_2,\vec{z}_3+\vec{z}_4,2\vec{d})$\\
\hline
$(3,3,3)$&$(3,3,3)$&$\langle \vec{x}_1-\vec{x}_2\rangle$&$(\vec{d},\vec{d},\vec{z}_1+\vec{z}_2+\vec{z}_3)$\\
\hline
\multirow{2}*{$(6,3,2)$}&$(3,3,3)$&$\langle 3\vec{x}_1-\vec{x}_3\rangle$&$(\vec{z}_1,\vec{z}_2+\vec{z}_3,\vec{d})$\\
\cline{2-4}
&$(2,2,2,2)$&$\langle 2\vec{x}_1-\vec{x}_2\rangle$&$(\vec{z}_1,\vec{d},\vec{z}_2+\vec{z}_3+\vec{z}_4)$\\
\hline
\end{tabular}
\vspace{1em}
\caption{The list of admissible homomorphisms between tubular types}\label{table for tubular admissible}
\end{table}

\end{thm}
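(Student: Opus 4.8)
The plan is to turn Theorem \ref{classification} into a finite enumeration supported by the structural results of Section 3. By Lemma \ref{cor:3.5} an admissible homomorphism with trivial kernel is just a permutation of generators, so it is not a table entry; hence we may assume $\ker\pi\neq0$. Then Proposition \ref{ker pi} says that, up to permuting the $\vec{x}_i$, $\ker\pi$ is of Cyclic type $\langle\tfrac{p_1}{n}\vec{x}_1-\tfrac{p_2}{n}\vec{x}_2\rangle$ (with $n\mid{\rm g.c.d.}(p_1,p_2)$, $n>1$) or of Klein type $\langle\tfrac{p_1}{2}\vec{x}_1-\tfrac{p_2}{2}\vec{x}_2,\ \tfrac{p_1}{2}\vec{x}_1-\tfrac{p_3}{2}\vec{x}_3\rangle$ (with $p_1,p_2,p_3$ even), and Proposition \ref{from subgroup to admissible homomorphism} together with Remark \ref{explicit expression of pi} shows that such a subgroup $H\subseteq\bbL(\mathbf{p})$ determines, uniquely up to permutation, both the target $\bbL(\mathbf{q})$ and the map $\pi$ on generators, via explicit formulas. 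So classifying the admissible $\pi$ with source $\bbL(\mathbf{p})$ amounts to listing all subgroups of the finite torsion group $t\bbL(\mathbf{p})$ of Cyclic or Klein type, up to permutation of $\{1,\dots,t\}$.

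Next, Proposition \ref{pi omega} gives $\pi(\vec{\omega})=\vec{\omega}'$, so $\bbL(\mathbf{q})$ is domestic (resp. tubular, wild) precisely when $\bbL(\mathbf{p})$ is; hence for the classification between domestic types, resp. tubular types, it suffices to carry out the enumeration for $\mathbf{p}$ ranging over the seven domestic weight types $(),(p),(p_1,p_2),(2,2,p_3),(2,3,3),(2,3,4),(2,3,5)$ and the four tubular types $(2,2,2,2),(3,3,3),(4,4,2),(6,3,2)$, with no source omitted. For each such $\mathbf{p}$ one inspects the pairs $\{i,j\}$ with ${\rm g.c.d.}(p_i,p_j)>1$ and all admissible $n$, and the triples of even weights, reads off $H$, and then applies Proposition \ref{from subgroup to admissible homomorphism} and Remark \ref{explicit expression of pi} to obtain $\mathbf{q}$ (deleting any weight-$1$ entries, which does not change $\bbL(\mathbf{q})$) and $\pi$; collecting the results and discarding permutation-duplicates yields exactly Tables \ref{table for domestic admissible} and \ref{table for tubular admissible}. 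The families $(p_1,p_2)$ and $(2,2,p_3)$ account for the parametrized rows (the three rows $(nq_1,nq_2)\to(q_1,q_2)$, $(nq,n)\to(q)$, $(n,n)\to()$ coming from how many of the $p_i/n$ equal $1$), while $(2,3,5)$ contributes nothing, $(2,3,4)$ only $\langle\vec{x}_1-2\vec{x}_3\rangle$, and the tubular cases give short finite lists.

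The work is entirely in the last step, and the main pitfall is completeness and non-redundancy of the enumeration rather than any single hard idea: one must list \emph{every} pair $\{i,j\}$ and \emph{every} divisor $n>1$ of ${\rm g.c.d.}(p_i,p_j)$ (so that for $(4,4,2)$ both nested kernels $\langle2\vec{x}_1-2\vec{x}_2\rangle\subsetneq\langle\vec{x}_1-\vec{x}_2\rangle$ appear), correctly identify permutation-equivalent kernels (e.g. $\langle2\vec{x}_1-\vec{x}_3\rangle$ with $\langle2\vec{x}_2-\vec{x}_3\rangle$ for $(4,4,2)$), check the evenness hypothesis before admitting a Klein-type kernel (so $(6,3,2)$ and $(2,3,3)$ contribute no Klein kernel), and keep track of the deletion of weight-$1$ components of $\mathbf{q}$. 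Handling these finitely many cases consistently with Proposition \ref{from subgroup to admissible homomorphism} and Remark \ref{explicit expression of pi} produces the two tables.
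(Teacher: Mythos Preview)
Your proposal is correct and follows essentially the same approach as the paper: the paper's ``proof'' is precisely the three-step procedure stated just before Theorem \ref{classification} (enumerate the Cyclic/Klein subgroups of $t\bbL(\mathbf{p})$, derive $\mathbf{q}$ via Proposition \ref{from subgroup to admissible homomorphism}, and read off $\pi$ via Remark \ref{explicit expression of pi}), applied to each domestic and tubular weight type. Your account is in fact more detailed than the paper's, which leaves the case-by-case enumeration entirely implicit.
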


We remark that our procedure is also valid for weighted projective lines of wild types. In the following we give a typical example to show how it works.

\begin{exm}\label{example for 46710} Let $\pi: \bbL(\mathbf{p})\to \bbL(\mathbf{q})$ be an admissible homomorphism with $\mathbf{p}=(4,6,7,10)$. Since the torsion group $t\bbL({\bf p})=\{0,2\vec{x}_1-3\vec{x}_2, 2\vec{x}_1-5\vec{x}_4, 3\vec{x}_2-5\vec{x}_4\}$,
by Proposition \ref{ker pi}, $\ker\pi\,(\neq 0)$ has one of the following forms:
$$(i)\, \langle 2\vec{x}_1-3\vec{x}_2 \rangle,\;(ii)\, \langle 2\vec{x}_1-5\vec{x}_4 \rangle,\; (iii)\, \langle 3\vec{x}_2-5\vec{x}_4 \rangle,\; (iv)\, \langle 2\vec{x}_1-3\vec{x}_2, 2\vec{x}_1-5\vec{x}_4 \rangle.$$
Now by using Proposition \ref{from subgroup to admissible homomorphism}, we can derive the weight type $\mathbf{q}$ respectively:
$$\begin{array}{lll}
(i)\,(2,3,7,7,10,10), &&(ii)\, (2,6,6,7,7,5), \\
(iii)\, (4,4,3,7,7,5), &&(iv)\, (2,2,3,3,7,7,7,7,5,5).
\end{array}$$
Finally, by applying Remark \ref{explicit expression of pi} we can determine the homomorphism $\pi$ by giving $\pi(\vec{x}_1, \vec{x}_2,\vec{x}_3,\vec{x}_4):=(\pi(\vec{x}_1), \pi(\vec{x}_2), \pi(\vec{x}_3), \pi(\vec{x}_4))$ as follows:
$$\begin{array}{ll}
(i)\,(\vec{z}_1,\vec{z}_2,\vec{z}_{3_1}+\vec{z}_{3_2},\vec{z}_{4_1}+\vec{z}_{4_2}), \;
&(ii)\,(\vec{z}_1,\vec{z}_{2_1}+\vec{z}_{2_2},\vec{z}_{3_1}+\vec{z}_{3_2},\vec{z}_4),\\
(iii)\, (\vec{z}_{1_1}+\vec{z}_{1_2},\vec{z}_2,\vec{z}_{3_1}+\vec{z}_{3_2},\vec{z}_4),\;
&(iv)\, (\vec{z}_{1_1}+\vec{z}_{1_2},\vec{z}_{2_1}+\vec{z}_{2_2},
\sum_{j=1}^4\vec{z}_{3_j},
\vec{z}_{4_1}+\vec{z}_{4_2}).
\end{array}
$$
\end{exm}

\section{Equivariant equivalences induced by degree-shift actions}

Two abelian categories $\mathcal{A}$ and $\mathcal{B}$ are said to be \emph{equivariant equivalent} if there exist a strict action of a finite group $G$ on $\mathcal{A}$ such that the equivariant category $\mathcal{A}^{G}$ is equivalent to $\mathcal{B}$.
In this section, we study the equivariant equivalence relations induced by degree-shift actions between the categories of coherent sheaves on weighted projective lines.

\subsection{Compatible algebra homomorphisms}

Let $\mathbf{p}=(p_1,p_2,\cdots,p_t)$ and $\mathbf{q}=(q_1,q_2,\cdots,q_s)$ be two weight sequences.
Denote the associated string groups by $$\bbL({\bf p})=\bbZ\{\vec{x}_1, \vec{x}_2,\cdots, \vec{x}_t\}/(p_1\vec{x}_1=p_2\vec{x}_2=\cdots = p_t\vec{x}_t:=\vec{c});$$
 $$\bbL({\bf q})=\bbZ\{\vec{z}_1, \vec{z}_2,\cdots, \vec{z}_s\}/(q_1\vec{z}_1=q_2\vec{z}_2=\cdots = q_s\vec{z}_s:=\vec{d}).$$ Let ${\boldsymbol\lambda}=(\lambda_1, \lambda_2, \cdots, \lambda_t)$ and ${\boldsymbol\mu}=(\mu_1, \mu_2, \cdots, \mu_s)$ be two normalized parameter sequences. The weighted projective lines $\bbX({\bf p}; {\boldsymbol\lambda})$ and  $\bbX({\bf q}; {\boldsymbol\mu})$ admit homogeneous coordinate algebras $S(\mathbf{p}; {\boldsymbol\lambda})$ and $S(\mathbf{q}; {\boldsymbol\mu})$ respectively, having the following forms:
$$S(\mathbf{p}; {\boldsymbol\lambda})=\mathbf{k}[X_1, X_2,\cdots, X_t]/\big(X_i^{p_i}-(X_2^{p_2}-\lambda_{i}X_1^{p_1})|3\leq i\leq t\big)\triangleq \mathbf{k}[x_1, x_2,\cdots, x_t],$$
$$S(\mathbf{q}; {\boldsymbol\mu})=\mathbf{k}[Z_1,Z_2,\cdots, Z_s]/\big(Z_i^{q_i}-(Z_2^{q_2}-\mu_{i}Z_1^{q_1})|3\leq i\leq s\big)\triangleq \mathbf{k}[z_1, z_2,\cdots, z_s].$$

Let $\pi\colon \bbL(\mathbf{p})\rightarrow \bbL(\mathbf{q})$ be an admissible homomorphism. Recall that an algebra homomorphisms $\phi: S(\mathbf{p}; {\boldsymbol\lambda})\rightarrow S(\mathbf{q}; {\boldsymbol\mu})$ is \emph{compatible} with
$\pi$ if $\phi(S(\mathbf{p}; {\boldsymbol\lambda})_{\vec{x}})\subseteq S(\mathbf{q}; {\boldsymbol\mu})_{\pi(\vec{x})}$ for each $\vec{x}\in \bbL(\mathbf{p})$.

\begin{lem}\label{Prop:4.1}
Assume $\phi: S(\mathbf{p}; {\boldsymbol\lambda})\rightarrow S(\mathbf{q}; {\boldsymbol\mu})$ is an algebra homomorphism, then $\phi$ is compatible with $\pi$
if and only if $\phi(S(\mathbf{p}; {\boldsymbol\lambda})_{\vec{x}_i})\subseteq S(\mathbf{q}; {\boldsymbol\mu})_{\pi(\vec{x}_i)}$ for $1\leq i\leq t$.
\end{lem}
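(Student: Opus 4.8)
The plan is to prove the two implications separately: the forward direction is immediate from the definition, while the converse reduces to the explicit monomial description of the graded pieces of $S(\mathbf{p};{\boldsymbol\lambda})$ recalled in Section 2.

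For the ``only if'' part, compatibility of $\phi$ with $\pi$ says precisely that $\phi(S(\mathbf{p};{\boldsymbol\lambda})_{\vec{x}})\subseteq S(\mathbf{q};{\boldsymbol\mu})_{\pi(\vec{x})}$ for \emph{every} $\vec{x}\in\bbL(\mathbf{p})$, so in particular for $\vec{x}=\vec{x}_i$, $1\le i\le t$. (Here $\vec{x}_i$ is already in normal form with $l=0$, so $S(\mathbf{p};{\boldsymbol\lambda})_{\vec{x}_i}=\mathbf{k}\,x_i$ is one-dimensional, and the condition amounts to $\phi(x_i)\in S(\mathbf{q};{\boldsymbol\mu})_{\pi(\vec{x}_i)}$.)

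For the ``if'' part, I would fix $\vec{x}\in\bbL(\mathbf{p})$ and write its normal form $\vec{x}=\sum_{i=1}^t l_i\vec{x}_i+l\vec{c}$. If $l<0$ then $S(\mathbf{p};{\boldsymbol\lambda})_{\vec{x}}=0$ and there is nothing to check, so assume $l\ge 0$. The key input is the $\mathbf{k}$-basis $\{x_1^{ap_1}x_2^{bp_2}x_1^{l_1}x_2^{l_2}\cdots x_t^{l_t}\mid a+b=l,\ a,b\ge 0\}$ of $S(\mathbf{p};{\boldsymbol\lambda})_{\vec{x}}$ from \cite[Proposition 1.3]{GL87}: it shows that $S(\mathbf{p};{\boldsymbol\lambda})_{\vec{x}}$ is spanned by monomials $x_1^{a_1}\cdots x_t^{a_t}$ with $\sum_i a_i\vec{x}_i=\vec{x}$ in $\bbL(\mathbf{p})$. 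Since $\phi$ is an algebra homomorphism, $\phi(x_1^{a_1}\cdots x_t^{a_t})=\phi(x_1)^{a_1}\cdots\phi(x_t)^{a_t}$, and by hypothesis each $\phi(x_i)$ lies in the homogeneous component $S(\mathbf{q};{\boldsymbol\mu})_{\pi(\vec{x}_i)}$; as $S(\mathbf{q};{\boldsymbol\mu})$ is $\bbL(\mathbf{q})$-graded, this product lies in $S(\mathbf{q};{\boldsymbol\mu})_{\sum_i a_i\pi(\vec{x}_i)}=S(\mathbf{q};{\boldsymbol\mu})_{\pi(\sum_i a_i\vec{x}_i)}=S(\mathbf{q};{\boldsymbol\mu})_{\pi(\vec{x})}$. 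Taking $\mathbf{k}$-linear combinations then gives $\phi(S(\mathbf{p};{\boldsymbol\lambda})_{\vec{x}})\subseteq S(\mathbf{q};{\boldsymbol\mu})_{\pi(\vec{x})}$.

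I do not expect a real obstacle here. The only point that needs care is the passage from an arbitrary homogeneous element to a spanning set of monomials in the generators $x_i$, which is exactly what the basis recalled in Section 2 provides; once that is in hand, the inclusion is forced by $\phi$ being an algebra map into an $\bbL(\mathbf{q})$-graded algebra.
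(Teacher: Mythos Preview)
Your proof is correct and follows essentially the same approach as the paper: both reduce the ``if'' direction to the explicit monomial basis of $S(\mathbf{p};\boldsymbol\lambda)_{\vec{x}}$ from \cite[Proposition 1.3]{GL87}, apply $\phi$ to each monomial using multiplicativity, and observe that the resulting element is homogeneous of degree $\pi(\vec{x})$ in $S(\mathbf{q};\boldsymbol\mu)$.
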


\begin{proof}
 The ``only if"  part is obvious. We only prove the``if " part.
 For each $\vec{x}\in \bbL(\mathbf{p})$, write $\vec{x}=\sum_{i=1}^t l_{i}\vec{x}_i+l\vec{c}$ in normal form. Then  $S(\mathbf{p}; {\boldsymbol\lambda})_{\vec{x}}$ has a $\mathbf{k}$-basis $\{x^{ap_1}_1x^{bp_2}_2\prod_{i=1}^tx^{l_i}_i\,|\,a+b=l, a\geq 0, b\geq 0\}$, and  $$\phi(x^{ap_1}_1x^{bp_2}_2\prod_{i=1}^tx^{l_i}_i)=
 \phi(x_1)^{ap_1}\cdot\phi(x_2)^{bp_2}\cdot\prod_{i=1}^t\phi(x_i)^{l_i},$$ which has degree
 $\sum_{i=1}^t l_{i}\pi(\vec{x}_i)+(a+b)\pi(\vec{c})=\pi(\vec{x})$ by assumption. Then we are done.
\end{proof}

Let $\pi\colon \bbL(\mathbf{p})\rightarrow \bbL(\mathbf{q})$ be an admissible homomorphism. By Proposition \ref{ker pi}, if $\ker\pi\neq 0$, then it is either of Cyclic type or Klein type.
Assume $\phi: S(\mathbf{p}; {\boldsymbol\lambda})\rightarrow S(\mathbf{q}; {\boldsymbol\mu})$ is an algebra homomorphism which is compatible with $\pi$. Recall that $\phi$ induces an $\im\pi$-graded algebra homomorphism
$$\bar{\phi}\colon \pi_*S(\mathbf{p}; {\boldsymbol\lambda})\longrightarrow S(\mathbf{q}; {\boldsymbol\mu})_{\im\pi}.$$
For convenience we denote by $\bar{\phi}_{\vec{z}}\colon (\pi_*S(\mathbf{p}; {\boldsymbol\lambda}))_{\vec{z}}\longrightarrow S(\mathbf{q}; {\boldsymbol\mu})_{\vec{z}}$ the restriction map of $\bar{\phi}$ on each degree $\vec{z}\in\im\pi$.

The following result is useful and will be used frequently later on.

\begin{prop}\label{Prop:4.2} Keep the notation as above. If one of the following holds:
\begin{itemize}
  \item [(1)] $\ker\pi$ is of Cyclic type and $\bar{\phi}_{\vec{d}}$ is surjective;
  \item [(2)] $\ker\pi$ is of Klein type and $\bar{\phi}_{2\vec{d}}$ is surjective;
\end{itemize}
then $\bar{\phi}$ is a surjective, which yields an equivalence
\begin{equation}\label{equivalence induced by ker pi}({\rm coh}\mbox{-}\mathbb{X}(\mathbf{p}; {\boldsymbol\lambda}))^{\ker\pi}\stackrel{\sim}\longrightarrow {\rm coh}\mbox{-}\mathbb{X}(\mathbf{q}; {\boldsymbol\mu}).\end{equation}
\end{prop}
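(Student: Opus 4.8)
The plan is to deduce that $\bar\phi$ is surjective and then quote Proposition \ref{Prop:2.3}. The key preliminary remark is a dimension count: since $\pi$ is admissible, condition (2) of Definition \ref{defn:AH} together with the equality $\dim_{\mathbf k}S(-)_{\vec{x}}={\rm mult}(\vec{x})$ gives $\dim_{\mathbf k}\big(\pi_*S(\mathbf{p};{\boldsymbol\lambda})\big)_{\vec{z}}=\dim_{\mathbf k}S(\mathbf{q};{\boldsymbol\mu})_{\vec{z}}<\infty$ for every $\vec{z}\in\im\pi$. Since $\bar\phi$ is $\im\pi$-graded, it is surjective iff each $\bar\phi_{\vec z}$ is bijective iff each $\bar\phi_{\vec z}$ is injective iff $\bar\phi$ is injective; and because $\pi_*S(\mathbf{p};{\boldsymbol\lambda})=S(\mathbf{p};{\boldsymbol\lambda})$ as ungraded algebras with $\bar\phi$ equal to $\phi$, this is the same as $\phi$ being injective. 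Thus the whole statement reduces to showing that $\phi$ is injective.

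For this I would use that $S(\mathbf{p};{\boldsymbol\lambda})$ is an integral domain of Krull dimension $2$ (see \cite{GL87}). As $\phi$ is a unital algebra homomorphism, $\ker\phi$ is a proper ideal, and $\phi$ is injective as soon as $\operatorname{tr.deg}_{\mathbf k}\operatorname{Frac}(\im\phi)=2$ — equivalently, as soon as $\phi(x_1)$ and $\phi(x_2)$ are algebraically independent over $\mathbf{k}$: for then $\dim S(\mathbf{p};{\boldsymbol\lambda})/\ker\phi=2=\dim S(\mathbf{p};{\boldsymbol\lambda})$, which in a domain forces $\ker\phi=0$.

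It then remains to extract this algebraic independence from the hypothesis on $\bar\phi_{\vec d}$ (resp.\ $\bar\phi_{2\vec d}$). In the Cyclic case, Proposition \ref{ker pi} and its proof show that $\pi^{-1}(\vec d)=\tfrac{p_1}{n}\vec{x}_1+\ker\pi$ has exactly two elements of positive multiplicity, namely $\tfrac{p_1}{n}\vec{x}_1$ and $\tfrac{p_2}{n}\vec{x}_2$ (each with ${\rm mult}=1$); hence $\big(\pi_*S(\mathbf{p};{\boldsymbol\lambda})\big)_{\vec d}$ has basis $\{x_1^{p_1/n},x_2^{p_2/n}\}$, while $S(\mathbf{q};{\boldsymbol\mu})_{\vec d}$ has basis $\{z_1^{q_1},z_2^{q_2}\}$ with $q_i=p_i/n$, and $z_1,z_2$ are algebraically independent in the two-dimensional domain $S(\mathbf{q};{\boldsymbol\mu})$. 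Surjectivity, hence bijectivity, of $\bar\phi_{\vec d}$ then makes $\{\phi(x_1)^{q_1},\phi(x_2)^{q_2}\}$ a basis of $\langle z_1^{q_1},z_2^{q_2}\rangle$; since an invertible linear change of variables preserves algebraic independence, $\phi(x_1)^{q_1},\phi(x_2)^{q_2}$, and therefore $\phi(x_1),\phi(x_2)$, are algebraically independent. The Klein case is parallel: $\pi^{-1}(2\vec d)=\tfrac{p_1}{2}\vec{x}_1+\ker\pi$ has exactly three elements of positive multiplicity, $\tfrac{p_i}{2}\vec{x}_i$ for $i=1,2,3$, so $\big(\pi_*S(\mathbf{p};{\boldsymbol\lambda})\big)_{2\vec d}$ has basis $\{x_1^{p_1/2},x_2^{p_2/2},x_3^{p_3/2}\}$, while $S(\mathbf{q};{\boldsymbol\mu})_{2\vec d}$ is the space of binary quadratic forms in the algebraically independent elements $z_1^{q_1},z_2^{q_2}$ ($q_1=q_2=p_1/2$). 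Bijectivity of $\bar\phi_{2\vec d}$ makes $\{\phi(x_i)^{p_i/2}\}_{i=1,2,3}$ a basis of this space, so $\phi(x_1)^{p_1/2}$ and $\phi(x_2)^{p_2/2}$ are linearly independent binary quadratic forms; two such forms cannot be algebraically dependent — otherwise the image of the morphism $\mathbb{A}^2\to\mathbb{A}^2$ they define would be an irreducible cone of dimension $\le 1$, hence a line through the origin, forcing a linear relation — and again $\phi(x_1),\phi(x_2)$ come out algebraically independent.

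Putting this together: $\phi$ is injective, hence $\bar\phi$ is bijective, in particular surjective, and Proposition \ref{Prop:2.3} yields the equivalence (\ref{equivalence induced by ker pi}). The point I expect to be most delicate is the passage from the one-degree hypothesis to the algebraic independence of $\phi(x_1),\phi(x_2)$ — in particular keeping the computation of the fibres $\pi^{-1}(\vec d)$, $\pi^{-1}(2\vec d)$ and the identification of the corresponding graded pieces uniform across the degenerate subcases in which a weight equals $n$ (resp.\ $2$) so that the relevant entry of $\mathbf{q}$ is $1$; this, together with the input that $S(\mathbf{p};{\boldsymbol\lambda})$ and $S(\mathbf{q};{\boldsymbol\mu})$ are two-dimensional domains finite over a polynomial subalgebra, are the facts I would make sure to pin down.
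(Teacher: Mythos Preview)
Your argument is correct, but it takes a genuinely different route from the paper.

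The paper argues surjectivity of $\bar\phi$ directly, degree by degree. Using the explicit description of $\pi$ in Remark~\ref{explicit expression of pi}, every $\vec z\in\im\pi$ decomposes as a ``monomial part'' plus $l\vec d$ (resp.\ $2l\vec d$), and correspondingly $S(\mathbf{q};{\boldsymbol\mu})_{\vec z}$ factors as a one-dimensional piece---hit by $\phi(\prod_i x_i^{l_i})$ up to scalar, since each $\phi(x_i)$ lies in a one-dimensional homogeneous component---times $S(\mathbf{q};{\boldsymbol\mu})_{l\vec d}$. The latter is spanned by monomials in a basis $\{u,v\}$ of $S(\mathbf{q};{\boldsymbol\mu})_{\vec d}$ (resp.\ in $\{u^2,uv,v^2\}$), so surjectivity in degree $\vec d$ (resp.\ $2\vec d$) propagates to every degree. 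This uses only the grading combinatorics already set up and stays entirely elementary.

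Your approach is more structural: you convert surjectivity into injectivity of $\phi$ via the dimension count (essentially Lemma~\ref{lem:phi} read in reverse), and then deduce injectivity from the fact that $S(\mathbf{p};{\boldsymbol\lambda})$ is a two-dimensional integral domain, reducing everything to the algebraic independence of $\phi(x_1),\phi(x_2)$. This is conceptually clean and makes transparent \emph{why} a single degree controls the whole map, but it imports ring-theoretic input (integrality, Krull dimension) that the paper's proof does not need, and in the Klein case it requires the small extra lemma that two linearly independent binary quadratic forms in algebraically independent variables are themselves algebraically independent. Your cone argument for this last point is fine; an alternative is to note that any two of $u^2,uv,v^2$ are already algebraically independent, so an invertible linear change cannot drop the transcendence degree below $2$.
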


\begin{proof}
(1) If $\ker\pi$ is of Cyclic type, we have $\ker\pi=\langle \frac{p_1}{n}\vec{x}_1-\frac{p_2}{n}\vec{x}_2\rangle$ with $|\ker\pi|=n$. By Remak \ref{explicit expression of pi}, for $1\leq i\leq 2$ we have $\pi(\vec{x}_i)=\vec{z}_{i_1}$ or $\vec{d}$ according to $p_i>n$ or $p_i=n$ respectively; and for $3\leq i\leq t$, we have $\pi(\vec{x}_i)=\sum_{j=1}^{n}\vec{z}_{i_j}$; where all the $\vec{z}_{i_j}$'s are pairwise distinct generators of $\bbL(\mathbf{q})$.

For any $\vec{z}\in \im\pi$, write $\vec{z}=l_1\vec{z}_{1_1}+l_2\vec{z}_{2_1}+\sum_{i=3}^{t}\sum_{j=1}^{n}l_i\vec{z}_{i_j}+l\vec{d}$ in normal form, then $S(\mathbf{q}; {\boldsymbol\mu})_{\vec{z}}=z_{1_1}^{l_1}z_{2_1}^{l_2}\cdot\prod_{i=3}^{t}(\prod_{j=1}^{n}z_{i_j})^{l_i}\cdot S(\mathbf{q}; {\boldsymbol\mu})_{l\vec{d}}$. Recall that $\dim_{\bf k}S(\mathbf{q}; {\boldsymbol\mu})_{\vec{d}}=2$. Let $\{u,v\}$ be a basis of $S(\mathbf{q}; {\boldsymbol\mu})_{\vec{d}}$, then $\{u^{a}v^{b}\; |\; a+b=l, a, b\geq 0\}$ is a basis of $S(\mathbf{q}; {\boldsymbol\mu})_{l\vec{d}}$. Assume $\bar{\phi}_{\vec{d}}$ is surjective, then so is $\bar{\phi}_{l\vec{d}}$. Moreover, $\phi$ is compatible with $\pi$ implies $z_{1_1}^{l_1}z_{2_1}^{l_2}\cdot\prod_{i=3}^{t}(\prod_{j=1}^{n}z_{i_j})^{l_i}=\prod_{i=1}^t \phi(x_i)^{l_i}=\phi(\prod_{i=1}^t x_i^{l_i})$ up to a scalar. Therefore, $\bar{\phi}_{\vec{z}}$ is surjective for any $\vec{z}\in \im\pi$, and then $\bar{\phi}$ is a surjective.

(2) If $\ker\pi$ is of Klein type, we have $\ker\pi=\langle \frac{p_1}{2}\vec{x}_1-\frac{p_2}{2}\vec{x}_2, \frac{p_1}{2}\vec{x}_1-\frac{p_3}{2}\vec{x}_3\rangle$. By Remak \ref{explicit expression of pi}, for $1\leq i\leq 3$ we have $\pi(\vec{x}_i)=\vec{z}_{i_1}+\vec{z}_{i_2}$ or $2\vec{d}$ according to $p_i\neq 2$ or $p_i=2$ respectively; and for $4\leq i\leq t$ we have $\pi(\vec{x}_i)=\sum_{j=1}^{4}\vec{z}_{i_j}$; where all the $\vec{z}_{i_j}$'s are pairwise distinct generators of $\bbL(\mathbf{q})$.

For any $\vec{z}\in \im\pi$, write $\vec{z}=\sum_{i=1}^{3}l_{i}(\vec{z}_{i_1}+\vec{z}_{i_2})+ \sum_{i=4}^{t}\sum_{j=1}^{4}l_i\vec{z}_{i_j}+2l\vec{d}$ in normal form, then $S(\mathbf{q}; {\boldsymbol\mu})_{\vec{z}}=\prod_{i=1}^{3}(z_{i_1}z_{i_2})^{l_i}\cdot\prod_{i=4}^{t}(\prod_{j=1}^{4}z_{i_j})^{l_i}\cdot S(\mathbf{q}; {\boldsymbol\mu})_{2l\vec{d}}$. Recall that $\dim_{\bf k}S(\mathbf{q}; {\boldsymbol\mu})_{2\vec{d}}=3$ and $\{u^2,v^2,uv\}$ is a basis of $S(\mathbf{q}; {\boldsymbol\mu})_{2\vec{d}}$. Hence $\{u^{a}v^{b}\; |\; a+b=2l, a, b\geq 0\}$ is a basis of $S(\mathbf{q}; {\boldsymbol\mu})_{2l\vec{d}}$. Observe that $u^{a}v^{b}=u^{2a_1}v^{2b_1}(uv)^{c}$, where
$a_1=[\frac{a}{2}]$, $b_1=[\frac{b}{2}]$, and $c=0$ or 1 according to $a$ is even or odd respectively, here $[x]$ denotes the integer part of $x$.
Hence $\bar{\phi}_{2\vec{d}}$ is surjective implies $\bar{\phi}_{2l\vec{d}}$ is surjective, too. Moreover, $\phi$ is compatible with $\pi$ implies $\prod_{i=1}^{3}(z_{i_1}z_{i_2})^{l_i}\cdot\prod_{i=4}^{t}(\prod_{j=1}^{4}z_{i_j})^{l_i}=\prod_{i=1}^t \phi(x_i)^{l_i}=\phi(\prod_{i=1}^t x_i^{l_i})$ up to a scalar. Therefore, $\bar{\phi}_{\vec{z}}$ is surjective for any $\vec{z}\in \im\pi$, and then $\bar{\phi}$ is a surjective.

Now by Proposition \ref{Prop:2.3}, the surjective map $\bar{\phi}$ induces an equivalence (\ref{equivalence induced by ker pi}).
\end{proof}

Let $\pi_1, \pi_2$ be homomorphisms of groups and $\phi_1, \phi_2$ be homomorphisms of algebras as follows: $$\xymatrix{\bbL(\mathbf{p})\ar[r]^{\pi_1}& \bbL(\mathbf{r})\ar[r]^{\pi_2}&\bbL(\mathbf{q})}; \quad \xymatrix{S(\mathbf{p}; {\boldsymbol\lambda})\ar[r]^{\phi_1}& \ S(\mathbf{r}; {\boldsymbol\tau})\ar[r]^{\phi_2}&\ S(\mathbf{q}; {\boldsymbol\mu}).}$$
If $\phi_i$ is compatible with $\pi_i$ for $i=1,2$, then $\phi_2\phi_1$ is also compatible with $\pi_2\pi_1$ by
$$(\phi_2\phi_1)(S(\mathbf{p}; {\boldsymbol\lambda})_{\vec{x}})\subseteq \phi_2(S(\mathbf{r}; {\boldsymbol\tau})_{\pi_1(\vec{x})})\subseteq S(\mathbf{q}; {\boldsymbol\mu})_{(\pi_2\pi_1)(\vec{x})},\quad \vec{x}\in \bbL(\mathbf{p}).$$
This induces an $\im (\pi_2\pi_1)$-graded homomorphism $$\overline{\phi_2\phi_1}\colon (\pi_2\pi_1){_*}S(\mathbf{p}; {\boldsymbol\lambda})\longrightarrow S(\mathbf{q}; {\boldsymbol\mu})_{\im(\pi_2\pi_1)}.$$

\begin{prop}\label{decomposition of category}
Keep the notation as above. Assume the following hold:
\begin{itemize}
  \item [(1)] $\pi_1, \pi_2$ are admissible and $|\ker(\pi_2\pi_1)|=|\ker\pi_2|\cdot|\ker\pi_1|$;
  \item [(2)] $\phi_i$ is compatible with $\pi_i$ and $\bar{\phi_i}$ is surjective for $1\leq i\leq 2$;
\end{itemize}
then $\overline{\phi_2\phi_1}$ is surjective, which induces an equivalence
\begin{equation}\label{equiv decomposition}({\rm coh}\mbox{-}\mathbb{X}(\mathbf{p}; {\boldsymbol\lambda}))^{\ker(\pi_2\pi_1)}\stackrel{\sim}\longrightarrow {\rm coh}\mbox{-}\mathbb{X}(\mathbf{q}; {\boldsymbol\mu}).
\end{equation}
\end{prop}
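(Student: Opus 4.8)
The plan is to reduce the statement to an application of Proposition~\ref{Prop:2.3} by verifying that its hypotheses hold for the composite $\pi_2\pi_1$ and $\phi_2\phi_1$. By Proposition~\ref{Prop:3.13}, condition~(1) guarantees that $\pi_2\pi_1$ is admissible (indeed, admissibility of $\pi_1,\pi_2$ together with $|\ker(\pi_2\pi_1)|=|\ker\pi_2|\cdot|\ker\pi_1|$ is exactly statement~(2)$\Rightarrow$(1) of that proposition). We already observed before the statement that $\phi_2\phi_1$ is compatible with $\pi_2\pi_1$, so by Lemma~\ref{lem:phi} it suffices to prove that $\overline{\phi_2\phi_1}$ is surjective; then Lemma~\ref{lem:phi} upgrades it to an isomorphism of $\im(\pi_2\pi_1)$-graded algebras, and Proposition~\ref{Prop:2.3} delivers the equivalence~(\ref{equiv decomposition}).

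So the heart of the argument is the surjectivity of $\overline{\phi_2\phi_1}$. First I would note that $\im(\pi_2\pi_1)=\pi_2(\im\pi_1)$. Since $\pi_1$ and $\pi_2$ are admissible, Lemma~\ref{pi c} gives $\pi_1(\vec{c})=|\ker\pi_1|\,\vec{d}'$ and $\pi_2(\vec{d}')=|\ker\pi_2|\,\vec{d}$, where $\vec{d}'$ is the canonical element of $\bbL(\mathbf{r})$; hence $\overline{\phi_2\phi_1}$ restricted to the homogeneous component of degree $(\pi_2\pi_1)(\vec{c})$ factors as a composite of the two restricted maps whose surjectivity we are handed. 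More precisely, take any $\vec{z}\in\im(\pi_2\pi_1)$ and write $\vec{z}=\pi_2(\vec{y})$ for some $\vec{y}\in\im\pi_1$. The restriction $(\bar{\phi_2})_{\vec{z}}$ is surjective onto $S(\mathbf{q};{\boldsymbol\mu})_{\vec z}$ because $\bar{\phi_2}$ is surjective, and every element in its source $(\pi_{2*}S(\mathbf{r};{\boldsymbol\tau}))_{\vec z}=\bigoplus_{\vec y'\in\pi_2^{-1}(\vec z)}S(\mathbf{r};{\boldsymbol\tau})_{\vec y'}$ lies in the image of $\bar{\phi_1}$, since $\bar{\phi_1}$ is surjective onto $S(\mathbf{r};{\boldsymbol\tau})_{\im\pi_1}$ and each $\vec y'\in\pi_2^{-1}(\vec z)$ that supports a nonzero component actually lies in $\im\pi_1$ (as $\vec z\in\im(\pi_2\pi_1)$ and $\pi_2$ is admissible, the relevant preimages are controlled by $\ker\pi_2\subseteq\im\pi_1$ up to shift—this is where one invokes Lemma~\ref{lem:phi} applied to $\pi_2$, or alternatively Proposition~\ref{how to determine the map pi}). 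Chasing these two surjections through the factorization $\overline{\phi_2\phi_1}=\bar{\phi_2}\circ(\text{graded piece of }\bar{\phi_1})$ on each degree $\vec z$ yields surjectivity of $\overline{\phi_2\phi_1}$.

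The main obstacle I anticipate is the bookkeeping in the preceding sentence: one must be careful that, for a fixed $\vec z\in\im(\pi_2\pi_1)$, the full preimage $(\pi_2\pi_1)^{-1}(\vec z)$ in $\bbL(\mathbf{p})$ maps \emph{onto} the full preimage $\pi_2^{-1}(\vec z)$ in $\bbL(\mathbf{r})$ under $\pi_1$ — equivalently, that $\pi_1$ carries $(\pi_2\pi_1)^{-1}(\vec z)$ surjectively onto $\pi_2^{-1}(\vec z)\cap\im\pi_1$ and that the latter exhausts the support of $S(\mathbf{r};{\boldsymbol\tau})_{\im\pi_1}$ in those degrees. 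This is precisely where hypothesis~(1), in the guise $\ker\pi_2=\pi_1(\ker(\pi_2\pi_1))$ from Lemma~\ref{lem:4.2}, is essential: it ensures the fibres of $\pi_1$ over $\ker\pi_2$ are nonempty, so that translating a single preimage in $\bbL(\mathbf{r})$ by $\ker\pi_2$ and pulling back recovers all of $(\pi_2\pi_1)^{-1}(\vec z)$. Once this fibrewise surjectivity is in hand, the rest is the formal composition of surjections degree by degree, and Proposition~\ref{Prop:2.3} finishes the proof.
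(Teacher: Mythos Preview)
Your proposal is correct and follows essentially the same approach as the paper: reduce to Proposition~\ref{Prop:2.3} via Proposition~\ref{Prop:3.13}, and prove surjectivity of $\overline{\phi_2\phi_1}$ degree by degree using that $\pi_2^{-1}(\vec z)\subseteq\im\pi_1$ (which comes from $\ker\pi_2=\pi_1(\ker(\pi_2\pi_1))$ via Lemma~\ref{lem:4.2}). One small slip: in your second paragraph you cite Lemma~\ref{lem:phi} for the inclusion $\ker\pi_2\subseteq\im\pi_1$, but that lemma concerns the isomorphism of $\bar\phi$; the correct reference is Lemma~\ref{lem:4.2}, as you yourself identify in the third paragraph.
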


\begin{proof}
Firstly we show that $\overline{\phi_2\phi_1}$ is surjective.
That is, for each $\vec{z}\in \im(\pi_2\pi_1)$ and $f\in S(\mathbf{q}; {\boldsymbol\mu})_{\vec{z}}$, we need to find an element $h\in S(\mathbf{p}; {\boldsymbol\lambda})$, such that $\overline{\phi_2\phi_1}(h)=f$.

Observe that $\im(\pi_2\pi_1)\subseteq \im\pi_2$, and $\bar{\phi_2}: (\pi_2)_\ast S(\mathbf{r}; {\boldsymbol\tau})\to S(\mathbf{q}; {\boldsymbol\mu})_{\im\pi_2}$ is a surjective homomorphism between $\im\pi_2$-graded algebras. Hence there exists some $g\in((\pi_2){_*}S(\mathbf{r}; {\boldsymbol\tau}))_{\vec{z}}$ satisfying $\bar{\phi_2}(g)=f$.
Since $\vec{z}\in \im(\pi_2\pi_1)$, there
exists $\vec{x}\in \bbL(\mathbf{p})$ such that $\vec{z}= \pi_2\pi_1(\vec{x})$. Hence $\pi_1(\vec{x})\in \pi_2^{-1}(\vec{z})$ and then $\pi_2^{-1}(\vec{z})=\pi_1(\vec{x})+\ker\pi_2$.
By Lemma \ref{lem:4.2} we have $\ker\pi_2\subseteq\im\pi_1$, hence $\pi_2^{-1}(\vec{z})\subseteq \im\pi_{1}.$ Therefore, $$g\in ((\pi_2){_*}S(\mathbf{r}; {\boldsymbol\tau}))_{\vec{z}}=\bigoplus_{\vec{y}\in \pi_2^{-1}(\vec{z})} S(\mathbf{r}; {\boldsymbol\tau})_{\vec{y}}\subseteq S(\mathbf{r}; {\boldsymbol\tau})_{\im\pi_1}.$$
Now $\bar{\phi_1}: (\pi_1)_\ast S(\mathbf{p}; {\boldsymbol\lambda})\to S(\mathbf{r}; {\boldsymbol\tau})_{\im\pi_1}$ is surjective implies that there exists some  $h\in S(\mathbf{p}; {\boldsymbol\lambda})$ such that $\bar{\phi_1}(h)=g$. Therefore, $\overline{\phi_2\phi_1}(h)=\bar{\phi_2}(g)=f$.

By Proposition \ref{Prop:3.13}, $\pi_2\pi_1$ is admissible, and $\phi_2\phi_1$ is compatible with $\pi_2\pi_1$. Then by Proposition \ref{Prop:2.3}, the homomorphism $\overline{\phi_2\phi_1}$ induces an equivalence
(\ref{equiv decomposition}).
\end{proof}

\subsection{From admissible homomorphisms to equivariant equivalences}

In this subsection, we show that each admissible homomorphism between string groups induces an equivariant equivalence between the associated categories of coherent sheaves.

First we consider the admissible homomorphisms induced by permutations. Recall that any permutation $\sigma$ on $\{1,2,\cdots,t\}$ induces an admissible homomorphism $\pi_{\sigma}: \bbL({\bf p})\to \bbL({\bf p})$ via $\pi_{\sigma}(\vec{x}_i)=\vec{x}_{\sigma(i)}$ for $1\leq i\leq t$. In particular, if
$\sigma$ is a transposition $(i,j)$ for some $1\leq i<j\leq t$, then we have the following result:
\begin{lem}\label{lemma for permutation ij}
For any parameter sequence ${\boldsymbol\lambda}$, there exist a parameter sequence ${\boldsymbol\mu}$ and an algebra homomorphism $\phi_{(i,j)}: S(\mathbf{p}; {\boldsymbol\lambda})\to S(\mathbf{p}; {\boldsymbol\mu})$, which is compatible with $\pi_{(i,j)}$ and induces an equivalence
\begin{equation}\label{middle step for ker0}{\rm coh}\mbox{-}\mathbb{X}(\mathbf{p}; {\boldsymbol\lambda})\stackrel{\sim}\longrightarrow {\rm coh}\mbox{-}\mathbb{X}(\mathbf{p};{\boldsymbol\mu}).
\end{equation}
\end{lem}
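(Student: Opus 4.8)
The plan is to use the fact that $\ker\pi_{(i,j)}=0$, so that the equivariant category $({\rm coh}\mbox{-}\mathbb{X}(\mathbf{p}; {\boldsymbol\lambda}))^{\ker\pi_{(i,j)}}$ is nothing but ${\rm coh}\mbox{-}\mathbb{X}(\mathbf{p}; {\boldsymbol\lambda})$ itself. By Proposition \ref{Prop:2.3}, applied to the admissible homomorphism $\pi_{(i,j)}$, it then suffices to produce a normalized parameter sequence ${\boldsymbol\mu}$ together with an algebra homomorphism $\phi_{(i,j)}\colon S(\mathbf{p}; {\boldsymbol\lambda})\to S(\mathbf{p}; {\boldsymbol\mu})$ that is compatible with $\pi_{(i,j)}$ and for which the induced map $\bar\phi_{(i,j)}$ is surjective; since $\pi_{(i,j)}$ is an automorphism, $\im\pi_{(i,j)}=\bbL(\mathbf{p})$ and surjectivity of $\bar\phi_{(i,j)}$ is equivalent to surjectivity of $\phi_{(i,j)}$ as an algebra map. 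Write $\sigma=(i,j)$; then $p_k=p_{\sigma(k)}$ for all $k$ (this is exactly what makes $\pi_{(i,j)}$ a well-defined automorphism) and $\sigma^{-1}=\sigma$. Since $\dim_{\mathbf k}S(\mathbf{p}; {\boldsymbol\mu})_{\vec{x}_k}={\rm mult}(\vec{x}_k)=1$, any $\phi_{(i,j)}$ compatible with $\pi_{(i,j)}$ must send $x_k\mapsto c_kz_{\sigma(k)}$ for scalars $c_k\in\mathbf k$; conversely, by Lemma \ref{Prop:4.1} every such assignment is automatically compatible with $\pi_{(i,j)}$, it is surjective precisely when all $c_k\neq 0$, and the only further constraint is that it respect the defining relations $x_k^{p_k}=x_2^{p_2}-\lambda_kx_1^{p_1}$ for $3\le k\le t$ --- relations which, because $p_k=p_{\sigma(k)}$, live entirely in the two-dimensional degree-$\vec{c}$ components.

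To satisfy this constraint I would realize the index permutation $\sigma$ by a M\"obius transformation. Under the standard identification of $\mathbb{P}\big(S(\mathbf{p}; {\boldsymbol\lambda})_{\vec{c}}\big)$ with $\mathbb{P}^1$ --- for which $\langle x_1^{p_1}\rangle\leftrightarrow\infty$, $\langle x_2^{p_2}\rangle\leftrightarrow 0$ and $\langle x_2^{p_2}-\lambda x_1^{p_1}\rangle\leftrightarrow\lambda$, so that $\langle x_k^{p_k}\rangle$ corresponds to $\lambda_k$ for each $k$ --- the sharp $3$-transitivity of $\mathrm{PGL}_2(\mathbf k)$ yields $g\in\mathrm{PGL}_2(\mathbf k)$ with $g(\lambda_{\sigma(1)})=\infty$, $g(\lambda_{\sigma(2)})=0$ and $g(\lambda_{\sigma(3)})=1$ (the source points are distinct because ${\boldsymbol\lambda}$ consists of distinct points; for $t<3$ one imposes only the available conditions). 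Put $\mu_m:=g(\lambda_{\sigma(m)})$ for $1\le m\le t$; then the $\mu_m$ are pairwise distinct and ${\boldsymbol\mu}=(\mu_1,\dots,\mu_t)$ is normalized. Let $\Psi\colon S(\mathbf{p}; {\boldsymbol\lambda})_{\vec{c}}\to S(\mathbf{p}; {\boldsymbol\mu})_{\vec{c}}$ be a linear isomorphism realizing $g$; since a linear map between two-dimensional spaces is determined up to scalar by its effect on three distinct lines, and $\mu_{\sigma(k)}=g(\lambda_k)$ holds for all $k$ by construction, $\Psi$ carries $\langle x_k^{p_k}\rangle$ onto $\langle z_{\sigma(k)}^{p_{\sigma(k)}}\rangle$ for every $k$ --- not merely for $k\le 3$. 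Writing $\Psi(x_k^{p_k})=d_k\,z_{\sigma(k)}^{p_{\sigma(k)}}$ with $d_k\in\mathbf k^{\times}$, I would choose $c_k\in\mathbf k^{\times}$ with $c_k^{p_k}=d_k$ (possible since $\mathbf k$ is algebraically closed) and define $\phi_{(i,j)}(x_k):=c_kz_{\sigma(k)}$.

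It then remains to verify that $\phi_{(i,j)}$ is a well-defined algebra homomorphism, i.e.\ kills each relation $x_k^{p_k}-(x_2^{p_2}-\lambda_kx_1^{p_1})$; this is immediate from
$$\phi_{(i,j)}(x_k)^{p_k}=c_k^{p_k}z_{\sigma(k)}^{p_{\sigma(k)}}=\Psi(x_k^{p_k})=\Psi\big(x_2^{p_2}-\lambda_kx_1^{p_1}\big)=\phi_{(i,j)}(x_2)^{p_2}-\lambda_k\,\phi_{(i,j)}(x_1)^{p_1},$$
where the last equality uses the $\mathbf k$-linearity of $\Psi$ together with the relation $z_{\sigma(k)}^{p_{\sigma(k)}}=z_2^{p_2}-\mu_{\sigma(k)}z_1^{p_1}$ of $S(\mathbf{p}; {\boldsymbol\mu})$ encoding $\mu_{\sigma(k)}=g(\lambda_k)$. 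Thus $\phi_{(i,j)}$ is a surjective algebra homomorphism compatible with $\pi_{(i,j)}$, and Proposition \ref{Prop:2.3} yields the equivalence (\ref{middle step for ker0}). The only genuine ingredients are the sharp $3$-transitivity of $\mathrm{PGL}_2(\mathbf k)$, which permits the re-normalization of ${\boldsymbol\mu}$, and the root extraction that lifts $\Psi$ from degree $\vec{c}$ to the generators $x_k$; everything else is routine bookkeeping, so I do not expect a real obstacle --- the most delicate point is merely keeping track of the scalars $c_k$ and $d_k$.
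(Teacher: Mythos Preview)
Your argument is correct and genuinely different from the paper's. The paper proceeds by an explicit four-case analysis depending on $\mathbb{S}:=\{i,j\}\cap\{1,2\}$: in each case it writes down by hand a (possibly unnormalized) parameter sequence $\tilde{\boldsymbol\mu}$ and concrete formulas $x_k\mapsto (\text{scalar})\cdot z_{\sigma(k)}$, then composes with a rescaling $\tilde\phi$ to normalize $\mu_3=1$. The case split arises because the presentation of $S(\mathbf{p};\boldsymbol\lambda)$ singles out $x_1,x_2$, so swaps involving indices $1$ or $2$ need special bookkeeping.

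You instead absorb all of this into the $\mathrm{PGL}_2(\mathbf{k})$-action on $\mathbb{P}\big(S(\mathbf{p};\boldsymbol\lambda)_{\vec c}\big)\cong\mathbb{P}^1$: sharp $3$-transitivity produces the normalized target sequence $\boldsymbol\mu$ in one stroke, and root extraction lifts the degree-$\vec c$ linear isomorphism $\Psi$ to the generators. This is cleaner and uniform in $(i,j)$; the paper's approach, by contrast, has the advantage of displaying the scalars $c_k$ and the parameters $\mu_k$ explicitly (which becomes useful later, e.g.\ in Proposition~\ref{2222ker=0}). One small remark: in your final displayed chain the ``last equality'' is really just linearity of $\Psi$ together with $\Psi(x_m^{p_m})=\phi_{(i,j)}(x_m)^{p_m}$ for $m=1,2$; the relation $z_{\sigma(k)}^{p_{\sigma(k)}}=z_2^{p_2}-\mu_{\sigma(k)}z_1^{p_1}$ is not actually needed there.
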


\begin{proof}
Denote by $\mathbb{S}:=\{i,j\}\cap \{1,2\}$.
It is routine to check that the assignments in each case below define an algebra homomorphism $\tilde{\phi}_{(i,j)}: S(\mathbf{p}; {\boldsymbol\lambda})\to S(\mathbf{p}; \tilde{{\boldsymbol\mu}})$.
 \begin{itemize}
   \item [(1)] $\mathbb{S}=\emptyset$; let $\tilde{\mu}_i=\lambda_j$, $\tilde{\mu}_j=\lambda_i$ and $\tilde{\mu}_k=\lambda_k$ for any $k\notin \{i,j\}$, and define
$$\tilde{\phi}_{(i,j)}:\;\; x_i\mapsto z_j,\;\;x_j\mapsto z_i,\;\;x_k\mapsto z_k\; (k\notin \{i,j\});$$
\item [(2)] $\mathbb{S}=\{1, 2\}$; let $\tilde{\mu}_1=\lambda_1$, $\tilde{\mu}_2=\lambda_2$, and $\tilde{\mu}_k=\frac{1}{\lambda_k}$ for $k\notin\{1,2\}$, and define
$$\tilde{\phi}_{(i,j)}:\;\; x_1\mapsto z_2,\;\;x_2\mapsto z_1,\;\;x_k\mapsto (-\lambda_k)^{\frac{1}{p_k}}z_k\; (k\notin \{1,2\});$$
  \item [(3)] $\mathbb{S}=\{2\}$; let $\tilde{\mu}_1=\lambda_1$, $\tilde{\mu}_2=\lambda_2$, $\tilde{\mu}_i=-\lambda_i$, and $\tilde{\mu}_k=\lambda_k-\lambda_i$ for $k\notin\{1,2,i\}$, and define
$$\tilde{\phi}_{(i,j)}:\;\; x_2\mapsto z_i,\;\;x_i\mapsto z_2,\;\;x_k\mapsto z_k\; (k\notin \{i,j\});$$
 \item[(4)] $\mathbb{S}=\{1\}$; let $\tilde{\mu}_k=\lambda_k$ for $k\in\{1,2,i\}$ and $\tilde{\mu}_k=\frac{\lambda_k\lambda_i}{\lambda_k-\lambda_i}$ otherwise, and define $\tilde{\phi}_{(i,j)}$ as follows for  $k\notin \{1,2,i\}$:
$$x_1\mapsto ({\lambda_i})^{-\frac{1}{p_1}}z_i,\;\;x_2\mapsto z_2,\;\;x_i\mapsto ({\lambda_i})^{\frac{1}{p_i}}z_1, \;\;x_k\mapsto ({\frac{\lambda_i-\lambda_k}{\lambda_i}})^{\frac{1}{p_k}}z_k.$$
\end{itemize}
Observe that $\tilde{\mu}_1=\lambda_1$ and $\tilde{\mu}_2=\lambda_2$ in each case above, but $\tilde{\mu}_3\neq \lambda_3$ in general. We let ${\boldsymbol\mu}=(\tilde{\mu}_1,\; \tilde{\mu}_2,\; 1, \; \frac{\tilde{\mu}_4}{\tilde{\mu}_3},\;\cdots,\frac{\tilde{\mu}_t}{\tilde{\mu}_3}).$ Then ${\boldsymbol\mu}$ is normalized and the following assignments
$$z_1\mapsto (\tilde{\mu}_3)^{-\frac{1}{p_1}}z_1,\;\;z_k\mapsto z_k\; (2\leq k\leq t)$$
define an algebra homomorphism $\tilde{\phi}:=S(\mathbf{p}; \tilde{\boldsymbol\mu})\to S(\mathbf{p}; {\boldsymbol\mu})$.

Denote by $\phi_{(i,j)}:=\tilde{\phi}\circ\tilde{\phi}_{(i,j)}$. Then by Lemma \ref{Prop:4.1}, $\phi_{(i,j)}$ is compatible with $\pi_{(i,j)}$. Moreover, by similar arguments as in the proof of \cite[Proposition 3.8]{CC17}, we obtain that $\phi_{(i,j)}$ induces a surjective homomorphism $\overline{\phi_{(i,j)}}\colon \pi_*S(\mathbf{p}; {\boldsymbol\lambda})\rightarrow S(\mathbf{p}; {\boldsymbol\mu})_{\im\pi}$, which yields an equivalence (\ref{middle step for ker0}) by Proposition \ref{Prop:2.3}.
\end{proof}

Now we can state the main result of this subsection.

\begin{thm}\label{from admissible to equivariant}
Let $\pi\colon \bbL(\mathbf{p})\rightarrow \bbL(\mathbf{q})$ be an admissible homomorphism. Then for any parameter sequence ${\boldsymbol\lambda}$, there exist a parameter sequence ${\boldsymbol\mu}$ and an algebra homomorphism $\phi: S(\mathbf{p}; {\boldsymbol\lambda})\to S(\mathbf{q}; {\boldsymbol\mu})$, which is compatible with $\pi$ and induces an equivalence
\begin{equation}\label{equiv induced by admissible}({\rm coh}\mbox{-}\mathbb{X}(\mathbf{p}; {\boldsymbol\lambda}))^{\ker\pi}\stackrel{\sim}\longrightarrow {\rm coh}\mbox{-}\mathbb{X}(\mathbf{q}; {\boldsymbol\mu}).
\end{equation}
\end{thm}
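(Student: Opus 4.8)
The plan is to split into three cases according to the description of $\ker\pi$ in Proposition \ref{ker pi}: $\ker\pi=0$, $\ker\pi$ of Cyclic type, and $\ker\pi$ of Klein type. In each case it suffices, by Proposition \ref{Prop:2.3} (or its sharpening Proposition \ref{Prop:4.2}), to exhibit a normalized parameter sequence ${\boldsymbol\mu}$ and an algebra homomorphism $\phi\colon S(\mathbf{p}; {\boldsymbol\lambda})\to S(\mathbf{q}; {\boldsymbol\mu})$ that is compatible with $\pi$ and for which the induced map $\bar{\phi}$ is surjective. When $\ker\pi=0$, Lemma \ref{cor:3.5} writes $\pi=\pi_\sigma$ for a permutation $\sigma$, which we factor into transpositions; Lemma \ref{lemma for permutation ij} handles each transposition, and we compose these one at a time via Proposition \ref{decomposition of category} (whose numerical hypothesis is trivial here, all kernels being $0$). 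Since reordering the weighted points of $\bbX(\mathbf{p}; {\boldsymbol\lambda})$ only changes $\mathbf{p}$ and ${\boldsymbol\lambda}$ up to a reordering and a renormalization of the parameter sequence (a change of coordinates on the projective line, cf.\ the proof of Lemma \ref{lemma for permutation ij}), and since by Proposition \ref{from subgroup to admissible homomorphism} and Remark \ref{explicit expression of pi} the pair $(\mathbf{q},\pi)$ is determined by $\ker\pi$ up to such relabellings, we may in the remaining two cases assume that $\mathbf{q}$ and $\pi$ are in the normal form given there.

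For $\ker\pi$ of Cyclic type, write $\ker\pi=\langle\tfrac{p_1}{n}\vec{x}_1-\tfrac{p_2}{n}\vec{x}_2\rangle$ of order $n$, so that $\mathbf{q}=(\tfrac{p_1}{n},\tfrac{p_2}{n},\underbrace{p_3,\dots,p_3}_{n},\dots,\underbrace{p_t,\dots,p_t}_{n})$ with generators $\vec{z}_{1_1},\vec{z}_{2_1}$ and $\vec{z}_{i_j}$, and $\pi(\vec{x}_1)=\vec{z}_{1_1}$, $\pi(\vec{x}_2)=\vec{z}_{2_1}$, $\pi(\vec{x}_i)=\sum_{j=1}^{n}\vec{z}_{i_j}$ for $3\le i\le t$. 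The idea is to set $\phi(x_1)=z_{1_1}$, $\phi(x_2)=z_{2_1}$, $\phi(x_i)=\prod_{j=1}^{n}z_{i_j}$ and to choose ${\boldsymbol\mu}$ so that the defining relations $x_i^{p_i}=x_2^{p_2}-\lambda_i x_1^{p_1}$ ($3\le i\le t$) are respected. Writing $u=z_{2_1}^{p_2/n}$ and $v=z_{1_1}^{p_1/n}$, the relations of $S(\mathbf{q}; {\boldsymbol\mu})$ give $z_{i_j}^{p_i}=u-\mu_{i_j}v$, hence $\phi(x_i)^{p_i}=\prod_{j=1}^{n}(u-\mu_{i_j}v)$, while $\phi(x_2)^{p_2}-\lambda_i\phi(x_1)^{p_1}=u^{n}-\lambda_i v^{n}$; so $\phi$ is a well-defined algebra map exactly when, for each $i\ge 3$, the $\mu_{i_j}$ are the $n$ distinct $n$-th roots of $\lambda_i$. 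As $\mathbf{k}$ is algebraically closed and the $\lambda_i$ are pairwise distinct with $\lambda_i\neq 0$ for $i\ge 3$, these parameters are pairwise distinct (so $\bbX(\mathbf{q}; {\boldsymbol\mu})$ is legitimate), and one of the $n$-th roots of $\lambda_3=1$ equals $1$, so ${\boldsymbol\mu}$ may be taken normalized. Lemma \ref{Prop:4.1} shows $\phi$ is compatible with $\pi$, and $\bar{\phi}_{\vec{d}}$ carries the basis $\{x_1^{p_1/n},x_2^{p_2/n}\}$ of $(\pi_*S(\mathbf{p}; {\boldsymbol\lambda}))_{\vec{d}}$ onto the basis $\{v,u\}$ of $S(\mathbf{q}; {\boldsymbol\mu})_{\vec{d}}$, whence Proposition \ref{Prop:4.2}(1) applies and gives the equivalence (\ref{equiv induced by admissible}). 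The borderline situations $p_1=n$ or $p_2=n$ and $t\le 2$ are covered by the same formulas, interpreting a weight-$1$ generator as $\vec{d}$ and recalling that there are no defining relations when $t\le 2$.

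For $\ker\pi$ of Klein type we reduce to the previous case: here $\ker\pi=H_1\times H_2$ with $H_1,H_2\cong C_2$, so Proposition \ref{decomposition} supplies a weight sequence $\mathbf{r}$ and admissible $\pi_1\colon\bbL(\mathbf{p})\to\bbL(\mathbf{r})$, $\pi_2\colon\bbL(\mathbf{r})\to\bbL(\mathbf{q})$ with $\pi=\pi_2\pi_1$, $\ker\pi_1=H_1$ and $\ker\pi_2\cong H_2$, both of Cyclic type. Applying the Cyclic case to $\pi_1$ and ${\boldsymbol\lambda}$ produces a parameter sequence ${\boldsymbol\tau}$ and a compatible $\phi_1$ with $\bar{\phi_1}$ surjective; applying it to $\pi_2$ and ${\boldsymbol\tau}$ produces ${\boldsymbol\mu}$ and a compatible $\phi_2$ with $\bar{\phi_2}$ surjective. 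Since $|\ker\pi|=4=|\ker\pi_1|\cdot|\ker\pi_2|$, Proposition \ref{decomposition of category} yields a compatible $\phi=\phi_2\phi_1$ with $\overline{\phi_2\phi_1}$ surjective, hence the equivalence (\ref{equiv induced by admissible}).

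I expect the Cyclic case to be the crux: one must choose ${\boldsymbol\mu}$ compatibly with all $t-2$ relations simultaneously, check that the forced choice of $n$-th roots produces pairwise distinct and normalizable parameters, and verify surjectivity of $\bar{\phi}$ in degree $\vec{d}$ — the mechanism being that each relation $x_i^{p_i}=x_2^{p_2}-\lambda_i x_1^{p_1}$ rigidifies the $n$ parameters $\mu_{i_j}$ exactly and no further. The remaining bookkeeping (degenerate weights $p_i=n$, small $t$, and the permutation reductions) is routine but must be carried out with care.
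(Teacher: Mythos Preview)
Your proposal is correct and follows essentially the same route as the paper: the three-case split via Proposition \ref{ker pi}, the reduction of the $\ker\pi=0$ case to transpositions via Lemma \ref{lemma for permutation ij} and Proposition \ref{decomposition of category}, the explicit construction $\phi(x_1)=z_{1_1}$, $\phi(x_2)=z_{2_1}$, $\phi(x_i)=\prod_j z_{i_j}$ with $\mu_{i_j}$ chosen as the $n$-th roots of $\lambda_i$ in the Cyclic case, and the reduction of the Klein case to two Cyclic steps via Propositions \ref{decomposition} and \ref{decomposition of category}. The only cosmetic difference is that the paper treats the degenerate $t\le 2$ situations by an explicit table (with, e.g., $\phi(x_1,x_2)=(z_1+z_2,z_1-z_2)$ for $(n,n)\to()$), whereas you absorb them into the general formula; your uniform treatment is valid and slightly cleaner.
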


\begin{proof}
First we consider $\ker\pi=0$. By Lemma \ref{cor:3.5}, this happens if and only if $\pi=\pi_{\sigma}$ for some permutation $\sigma$ on $\{1,2,\cdots,t\}$. It is well known that $\sigma$ expresses as a product of transposition $(i,j)$'s, accordingly, $\pi_{\sigma}$ expresses as a composition of $\pi_{(i,j)}$'s. According to Lemma \ref{lemma for permutation ij}, each $\pi_{(i,j)}$ induces an equivalence (\ref{equiv induced by admissible}), then so does $\pi_{\sigma}$ by using Proposition \ref{decomposition of category}.

In the following we assume $\ker\pi\neq 0$.
By Proposition \ref{ker pi}, $\ker\pi$ is either of Cyclic type or Klein type (up to permutation). Hence we consider the following two cases:

(1) $\ker\pi$ is of Cyclic type, i.e. $\ker\pi=\langle \frac{p_1}{n}\vec{x}_1-\frac{p_2}{n}\vec{x}_2\rangle$. Firstly we assume $p_i>n$ for $i=1,2$ and $t\geq 3$. By Proposition \ref{from subgroup to admissible homomorphism} we have $\mathbf{q}=(\frac{p_1}{n}, \frac{p_2}{n}, \underbrace{p_3, \cdots, p_3}_{n\ \text{times}}, \cdots \underbrace{p_t, \cdots, p_t}_{n\ \text{times}}),$ and the generators of $\bbL(\mathbf{q})$ can be rewritten as $\vec{z}_{1_1}, \vec{z}_{2_1}$ and $\vec{z}_{i_j}$'s ($3\leq i\leq t, \,1\leq j\leq n$) respectively by Remak \ref{explicit expression of pi}. Moreover, $\pi(\vec{x}_1)=\vec{z}_{1_1}$, $\pi(\vec{x}_2)=\vec{z}_{2_1}$ and $\pi(\vec{x}_i)=\sum_{j=1}^{n}\vec{z}_{i_j}$ for $3\leq i\leq t$.

We denote by $z_{1_1}^{\frac{p_1}{n}}:=x$ and $z_{2_1}^{\frac{p_2}{n}}:=y$. Then for $3\leq i\leq t$, $$y^{n}-\lambda_ix^{n}=(y-\mu_{i_1}x)\cdots(y-\mu_{i_n}x),$$ where $\mu_{i_j}$'s are pairwise distinct $n$-th roots of $\lambda_i$. In particular, for $\lambda_3=1$, we can choose $\mu_{3_1}=1$. Now let $${\boldsymbol\mu}=(\infty, 0,\; \mu_{3_1}, \mu_{3_2},\cdots, \mu_{3_n},\;\cdots, \;\mu_{t_1}, \mu_{t_2}, \cdots, \mu_{t_{n}}).$$
Then in the homogeneous coordinate algebra $S(\mathbf{q}; {\boldsymbol\mu})$, we have $$z_{i_j}^{p_i}=z_{2_1}^{\frac{p_2}{n}}-\mu_{i_j}z_{1_1}^{\frac{p_1}{n}}=y-\mu_{i_j}x; \quad 3\leq i\leq t,\; 1\leq j\leq n.$$
In particular, $$\prod_{j=1}^{n}z_{i_j}^{p_i}=(y-\mu_{i_1}x)\cdots(y-\mu_{i_n}x)=y^{n}-\lambda_ix^{n}
=z_{2_1}^{p_2}-\lambda_iz_{1_1}^{p_1}.$$
We define an algebra homomorphism $\phi': \mathbf{k}[X_1,X_2,\cdots,X_t]\to S(\mathbf{q}; {\boldsymbol\mu})$ on generators as follows: $$\phi'(X_1)=z_{1_1},\phi'(X_2)=z_{2_1},\phi'(X_i)=\prod_{j=1}^{n}z_{i_j}\;\;(3\leq i\leq t).$$
Then $$\phi'(X_{i}^{p_i}-X_{2}^{p_2}+\lambda_iX_{1}^{p_1})
=\phi'(X_{i})^{p_i}-\phi'(X_{2})^{p_2}+\lambda_i\phi'(X_{1})^{p_1}
=\prod_{j=1}^{n}z_{i_j}^{p_i}-z_{2_1}^{p_2}+\lambda_iz_{1_1}^{p_1}=0.$$ Hence $\phi'$ induces an algebra homomorphism $\phi: S(\mathbf{p}; {\boldsymbol\lambda})\to S(\mathbf{q}; {\boldsymbol\mu})$ as follows:
$$\phi(x_1)=z_{1_1},\phi(x_2)=z_{2_1},\phi(x_i)=\prod_{j=1}^{n}z_{i_j}(3\leq i\leq t).$$
Clearly, $\phi$ is compatible with $\pi$, hence yields a homomorphism between $\im\pi$-graded algebras
$\bar{\phi}\colon \pi_*S({\bf p}; {\boldsymbol\lambda})\to S({\bf q}; {\boldsymbol\mu})_{\im\pi}.$
We claim $\bar{\phi}$ is surjective. By Proposition \ref{Prop:4.2}, it suffices to show that $\bar{\phi}_{\vec{d}}$ is surjective. This follows from the fact that $\phi(x_1^{\frac{p_1}{n}})=x$ and $\phi(x_2^{\frac{p_2}{n}})=y$ span the $\mathbf{k}$-space $S({\bf q}; {\boldsymbol\mu})_{\vec{d}}$.
Now by Proposition \ref{Prop:2.3}, $\bar{\phi}$ induces an equivalence (\ref{equiv induced by admissible}).

If $p_1=n$ or $p_2=n$, we can prove (\ref{equiv induced by admissible}) similarly.
Now we assume $t\leq 2$. Then by Proposition \ref{pi omega}, both of $\mathbb{X}(\mathbf{p})$ and $\mathbb{X}(\mathbf{q})$ are of domestic types. Then according to Theorem \ref{classification}, we have the following cases (for some $n>1$):
\begin{itemize}
   \item [(i)] $\mathbf{p}=(nq_1,nq_2),\;\mathbf{q}=(q_1,q_2)$ with $q_1>1,\;q_2>1$;
   \item [(ii)] $\mathbf{p}=(nq,n), \;\mathbf{q}=(q)$ with $q>1$;
   \item [(iii)] $\mathbf{p}=(n,n), \;\mathbf{q}=()$.
 \end{itemize}
In each case, we define an admissible homomorphism $\pi: \bbL(\mathbf{p})\to \bbL(\mathbf{q})$ and an algebra homomorphism $\phi: S(\mathbf{p})\to S(\mathbf{q})$ on generators as below.

 \begin{table}[h]\label{table 12}
\begin{tabular}{|c|c|c|c|c|}
  \hline
 $\mathbf{p}$&$\mathbf{q}$&$\pi(\vec{x}_1,\vec{x}_2)$ &$\phi(x_1, x_2)$\\
  \hline
  $(nq_1,nq_2)$&$(q_1,q_2)$&$(\vec{z}_1,\vec{z}_2)$&$(z_1,z_2)$\\
  \hline
  $(nq,n)$&$(q)$&$(\vec{z}_1, \vec{d})$&$(z_1,z_2)$\\
  \hline
  $(n,n)$&$()$&$(\vec{d},\vec{d})$&$(z_1+z_2,z_1-z_2)$\\
  \hline
  \end{tabular}
\end{table}

\noindent It is easy to see that in each case $\phi$ is compatible with $\pi$, which induces an algebra homomorphism $\bar{\phi}: \pi_*S(\bf p)\to S(\bf q)_{\im\pi}$. By using Proposition \ref{Prop:4.2} it is easy to check $\bar{\phi}$ is surjective, which yields an equivalence (\ref{equiv induced by admissible}) by Proposition \ref{Prop:2.3}.

(2) $\ker\pi$ is of Klein type, i.e. $\ker\pi=\langle \frac{p_1}{2}\vec{x}_1-\frac{p_2}{2}\vec{x}_2, \frac{p_1}{2}\vec{x}_1-\frac{p_3}{2}\vec{x}_3\rangle$.
By Proposition \ref{decomposition}, we can decompose $\pi=\pi_2\pi_1: \xymatrix{\bbL(\mathbf{p})\ar[r]^{\pi_1}& \bbL(\mathbf{r})\ar[r]^{\pi_2}&\bbL(\mathbf{q})}$, where $\pi_1$ and $\pi_2$ are admissible homomorphisms and $|\ker\pi_1|=|\ker\pi_2|=2$. By (1), there exists a parameter sequence ${\boldsymbol\tau}$ and an algebra homomorphism $\phi_1: S(\mathbf{p}; {\boldsymbol\lambda})\to S(\mathbf{r}; {\boldsymbol\tau})$, which is compatible with $\pi_1$ and induces an $\im\pi_1$-graded surjective homomorphism $\bar{\phi_1}$. Similarly, there exists a parameter sequence ${\boldsymbol\mu}$ and an algebra homomorphism $\phi_2: S(\mathbf{r}; {\boldsymbol\tau})\to S(\mathbf{q}; {\boldsymbol\mu})$, which is compatible with $\pi_2$ and induces an $\im\pi_2$-graded surjective homomorphism $\bar{\phi_2}$.

By Proposition \ref{decomposition of category}, the algebra homomorphism $\phi_2\phi_1\colon S(\mathbf{p}; {\boldsymbol\lambda})\rightarrow S(\mathbf{q}; {\boldsymbol\mu})$ induces an $\im(\pi_2\pi_1)$-graded surjective homomorphism $\overline{\phi_2\phi_1}\colon (\pi_2\pi_1){_*}S(\mathbf{p}; {\boldsymbol\lambda})\longrightarrow S(\mathbf{q}; {\boldsymbol\mu})_{\im(\pi_2\pi_1)}$, which yields the equivalence (\ref{equiv induced by admissible}) by Proposition \ref{Prop:2.3}.
\end{proof}

\begin{exm}\label{ker=x1-x2} Let $\pi\colon \bbL(2,2,2,2)\rightarrow \bbL(2,2,2,2)$ be a homomorphism defined by:
$$\pi(\vec{x}_1)=\pi(\vec{x}_2)=\vec{d},\;\pi(\vec{x}_3)=\vec{z}_1+\vec{z}_2,\; \pi(\vec{x}_4)=\vec{z}_3+\vec{z}_4.$$
Then $\pi$ is admissible and $\ker\pi=\langle \vec{x}_1-\vec{x}_2\rangle$.

For any $\lambda\in{\bf k}\backslash\{0,1\}$, denote by $\nu=\frac{\sqrt{\lambda}+1}{\sqrt{\lambda}-1}$ and let $\mu=\nu^2$. Then by similar arguments as in the proof of \cite[Proposition 3.8]{CC17}, the following assignments
$$x_1\mapsto -\nu z_1^{2}+z_2^{2},\;x_2\mapsto \nu z_1^{2}+z_2^{2},\; x_3\mapsto 2\sqrt{\nu}z_1z_2,\;x_4\mapsto \sqrt{1-\lambda}z_3z_4$$
define an algebra homomorphism $\phi_{12}: S(2,2,2,2;\lambda)\to S(2,2,2,2;\mu)$, which is obviously compatible with $\pi$ and induces an equivalence
$$({\rm coh}\mbox{-}\mathbb{X}(2,2,2,2;\lambda))^{\langle \vec{x}_1-\vec{x}_2\rangle}\stackrel{\sim}\longrightarrow {\rm coh}\mbox{-}\mathbb{X}(2,2,2,2;\mu).$$
\end{exm}

\subsection{From equivariant equivalences to admissible homomorphisms}

In this subsection we show that equivariant equivalences between the categories of coherent sheaves over weighted projective lines (induced by degree-shift actions) yield admissible homomorphisms between their string groups.

\begin{thm}\label{from equivariant to admissible}
Let $H$ be a finite subgroup of $\mathbb{L}(\bf{p})$. Assume there is an equivariant equivalence $\Phi: ({\rm coh}\mbox{-}\mathbb{X}(\mathbf{p}; {\boldsymbol\lambda}))^{H}
\stackrel{\sim}\longrightarrow
{\rm coh}\mbox{-}\mathbb{X}(\mathbf{q}; {\boldsymbol\mu})$. Then there exists an admissible homomorphism $\pi\colon \mathbb{L}(\mathbf{p})\rightarrow \mathbb{L}(\mathbf{q})$ such that $\ker\pi=H$.
\end{thm}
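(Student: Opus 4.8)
The plan is to reconstruct the admissible homomorphism $\pi$ from the structure that the equivalence $\Phi$ must preserve, reducing the genuinely hard part to a statement about finite subgroups of $\bbL(\mathbf{p})$. First I would describe the equivariant category $(\coh\bbX(\mathbf{p};{\boldsymbol\lambda}))^{H}$ intrinsically. Since $\bbL(\mathbf{p})$ has rank one, the finite subgroup $H$ lies in the torsion group $t\bbL(\mathbf{p})=\ker\delta$, so $\delta(\vec{h})=0$ for every $\vec{h}\in H$, and in normal form the only element of $H$ with positive multiplicity is $0$, with ${\rm mult}(0)=1$. Using this together with the adjunction between the induction functor $F$ and the forgetful functor $U$, the identifications $\Hom_{\coh\bbX(\mathbf{p};{\boldsymbol\lambda})}(\mathcal{O},\mathcal{O}(\vec{x}))=S(\mathbf{p};{\boldsymbol\lambda})_{\vec{x}}$, and Serre duality on $\coh\bbX(\mathbf{p};{\boldsymbol\lambda})$, I would check that $\mathcal{A}:=(\coh\bbX(\mathbf{p};{\boldsymbol\lambda}))^{H}$ is again a connected noetherian Ext-finite hereditary abelian category with Serre duality carrying a tilting object, and then identify it with ${\rm qmod}^{\bbL(\mathbf{p})/H}\mbox{-}B$, where $B=S(\mathbf{p};{\boldsymbol\lambda})$ is given the $\bbL(\mathbf{p})/H$-grading induced along $\bbL(\mathbf{p})\twoheadrightarrow\bbL(\mathbf{p})/H$ (so $B_{\bar{\vec{x}}}=\bigoplus_{\vec{x}\mapsto\bar{\vec{x}}}S(\mathbf{p};{\boldsymbol\lambda})_{\vec{x}}$). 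This identification is the mechanism underlying Proposition \ref{Prop:2.3}, applied with $\bbL(\mathbf{p})/H$ in the role of $\im\pi$ and $B$ in the role of the restriction subalgebra; in particular $\mathcal{A}$ carries the shift autoequivalences indexed by $\bbL(\mathbf{p})/H$ and a distinguished object $\mathcal{O}_{\mathcal{A}}:=F(\mathcal{O})$ whose graded section algebra $\bigoplus_{\bar{\vec{x}}}\Hom_{\mathcal{A}}(\mathcal{O}_{\mathcal{A}},\mathcal{O}_{\mathcal{A}}(\bar{\vec{x}}))$ is $B$.

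Next I would use $\Phi$ to pin down $H$ and then build $\pi$. Transporting the structure sheaf, its shifts, and the exceptional tubes of $\coh\bbX(\mathbf{q};{\boldsymbol\mu})$ back along $\Phi$ and comparing with the corresponding data of $\mathcal{A}$ imposes numerical restrictions: the shift action of $H$ fixes every homogeneous tube of $\coh\bbX(\mathbf{p};{\boldsymbol\lambda})$ and rotates the exceptional tube of rank $p_{i}$ through the cyclic quotient $\pi_{i}(H)\subseteq\mathbb{Z}/p_{i}\mathbb{Z}$, so $\mathcal{A}$ has, for each $i$, exceptional tubes of rank $p_{i}/|\pi_{i}(H)|$ occurring with multiplicity $|H|/|\pi_{i}(H)|$. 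Matching the resulting multiset with $\{q_{1},\dots,q_{s}\}$, together with the fact that $\Phi$ preserves the dualizing object $\vec{\omega}$ (equivalently the Riemann--Hurwitz relation $2-\sum_{j}(1-\tfrac{1}{q_{j}})=|H|\,(2-\sum_{i}(1-\tfrac{1}{p_{i}}))$), forces $H$ to be of Cyclic type or of Klein type, by a case analysis parallel to the proof of Proposition \ref{ker pi} but started from these constraints rather than from admissibility of a given $\pi$. Once $H$ is of one of these types, Proposition \ref{from subgroup to admissible homomorphism} furnishes a weight sequence $\mathbf{q}'$ (unique up to permutation and insertion of $1$'s) and an admissible homomorphism $\pi\colon\bbL(\mathbf{p})\to\bbL(\mathbf{q}')$ with $\ker\pi=H$; Theorem \ref{from admissible to equivariant} then gives $(\coh\bbX(\mathbf{p};{\boldsymbol\lambda}))^{H}\simeq\coh\bbX(\mathbf{q}';{\boldsymbol\mu}')$ for a suitable ${\boldsymbol\mu}'$, hence $\coh\bbX(\mathbf{q}';{\boldsymbol\mu}')\simeq\coh\bbX(\mathbf{q};{\boldsymbol\mu})$, and therefore $\mathbf{q}'=\mathbf{q}$ up to permutation since the weight type is an invariant of the category of coherent sheaves (it is the multiset of ranks of the exceptional tubes). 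So $\bbL(\mathbf{q}')\cong\bbL(\mathbf{q})$ and $\pi$ is the required admissible homomorphism; the case $H=0$ is immediate, since then $\coh\bbX(\mathbf{p};{\boldsymbol\lambda})\simeq\coh\bbX(\mathbf{q};{\boldsymbol\mu})$ forces $\mathbf{q}=\mathbf{p}$ up to permutation and the identity (or a permutation) map has kernel $0=H$.

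The step I expect to be the main obstacle is showing that $H$ must be of Cyclic or Klein type: a priori $\bbL(\mathbf{p})/H$ need not be a string group at all, so one cannot simply invoke Proposition \ref{ker pi} (which assumes $H$ is the kernel of an admissible homomorphism) and must instead exploit the existence of $\Phi$ --- that $\mathcal{A}$ is honestly $\coh$ of a weighted projective \emph{line} --- to exclude every other finite subgroup. Concretely this amounts to showing that the graded section algebra $B$ of $\mathcal{O}_{\mathcal{A}}$, once required to be isomorphic as a graded algebra to a homogeneous coordinate algebra $S(\mathbf{q};{\boldsymbol\mu})$, forces the grading group $\bbL(\mathbf{p})/H$ to embed into $\bbL(\mathbf{q})$ compatibly with degrees, and this compatibility is precisely what singles out the Cyclic and Klein types. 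A secondary subtlety is the tubular case, where $\vec{\omega}$ has finite order and $\mathcal{O}_{\mathcal{A}}=F(\mathcal{O})$ may have self-extensions; there one must identify $\Phi^{-1}(\mathcal{O}_{\bbX(\mathbf{q};{\boldsymbol\mu})})$ with the correct line bundle and normalize ${\boldsymbol\mu}$ explicitly, just as in the examples following Theorem \ref{from admissible to equivariant}.
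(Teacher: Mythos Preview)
Your approach differs substantially from the paper's, and the divergence is exactly at the point you flag as the main obstacle. You propose to first argue abstractly that $H$ must be of Cyclic or Klein type (via tube combinatorics and a Riemann--Hurwitz identity), then invoke Proposition~\ref{from subgroup to admissible homomorphism} to manufacture $\pi$, and finally match $\mathbf{q}'$ with $\mathbf{q}$ via Theorem~\ref{from admissible to equivariant}. The paper instead constructs $\pi$ \emph{directly} from $\Phi$ and never needs a prior classification of $H$: it shows that $\Phi\circ F$ (with $F$ the induction functor) preserves rank and sends line bundles to line bundles, so after a degree-shift normalisation one has $\Phi\circ F(\mathcal{O}(\vec{x}))=\mathcal{O}(\pi(\vec{x}))$ for a group homomorphism $\pi\colon\bbL(\mathbf{p})\to\bbL(\mathbf{q})$ whose value on generators is $\pi(\vec{x}_i)=\sum_j\det(S_{i_j})$, the $S_{i_j}$ being the simple summands of $\Phi\circ F(S_i)$ for an exceptional simple $S_i$ at $\lambda_i$. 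Then $\ker\pi=H$ is immediate (since $\pi(\vec{x})=0\iff F(\mathcal{O}(\vec{x}))\cong F(\mathcal{O})\iff\vec{x}\in H$), and admissibility is checked head-on: effectiveness because every exceptional simple of $\bbX(\mathbf{q};{\boldsymbol\mu})$ is a summand of some $\Phi\circ F(S_i)$, and the ${\rm mult}$-condition from the $(F,U)$-adjunction together with the fact that $\Phi$ preserves $\Hom$-dimensions. That $H$ is Cyclic or Klein then falls out as a \emph{consequence} via Proposition~\ref{ker pi}, not a prerequisite.

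Your indirect route could perhaps be completed, but the obstacle you identify is real and your proposed resolution is not yet a proof: the tube-multiset and Euler-characteristic constraints you list are necessary, but you have not shown they suffice to exclude every finite $H\subseteq t\bbL(\mathbf{p})$ that is neither Cyclic nor Klein, and the ``case analysis parallel to the proof of Proposition~\ref{ker pi}'' cannot be run as stated because that proof leans precisely on the ${\rm mult}$-identity you do not yet have. The paper's direct construction sidesteps this entirely, and also renders your secondary tubular worry moot --- one never needs to intrinsically recognise $F(\mathcal{O})$ as a structure sheaf, only to track where $\Phi\circ F$ sends each $\mathcal{O}(\vec{x})$.
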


\begin{proof}
Let $U: ({\rm coh}\mbox{-}\mathbb{X}(\mathbf{p}; {\boldsymbol\lambda}))^{H}
\to {\rm coh}\mbox{-}\mathbb{X}(\mathbf{p}; {\boldsymbol\lambda})$ be the forgetful functor, which admits a left (and right) adjoint functor $F$, known as the induction functor. It follows that both of $F$ and $U$ are exact. Then by \cite[Lemma 2.3.1]{CK},
$$\Ext^{i}_{({\rm coh}\mbox{-}\mathbb{X}(\mathbf{p}; {\boldsymbol\lambda}))^{H}}(FX, Y)\cong \Ext^{i}_{\coh\bbX(\bfp, \bfla)}(X, UY)$$ for any $i\geq 0$, $X\in\coh\bbX(\bfp, \bfla)$ and $Y\in ({\rm coh}\mbox{-}\mathbb{X}(\mathbf{p}; {\boldsymbol\lambda}))^{H}$. In particular, we have $\langle FX, Y\rangle=\langle X, UY\rangle$, where $\langle M, N\rangle= {\rm dim}_{\bf k}\Hom(M,N)-{\rm dim}_{\bf k}\Ext^{1}(M,N)$ is the Euler form.
We prove the theorem in the following steps.

(1) $\Phi\circ F$ is rank-preserved.

Recall that the rank of a coherent sheaf $X$ can be described via Euler form, see for example \cite{BKL}. More precisely, for any ordinary torsion sheaf $S$, we have ${\rm rk}(X)=\langle X, S\rangle$. Observe that $S$ is fixed under the degree-shift action by $H$. Hence $F(S)=(\bigoplus_{\vec{h}\in H}S(\vec{h}), {\rm \varepsilon})=\bigoplus_{\vec{h}\in H}(S, \alpha^{(\vec{h})})$,
where each $(S, \alpha^{(\vec{h})})$ is simple in $({\rm coh}\mbox{-}\mathbb{X}(\mathbf{p}; {\boldsymbol\lambda}))^{H}$ by \cite{RR}. Hence $\Phi((S, \alpha^{(\vec{h})}))$ is an ordinary simple sheaf in ${\rm coh}\mbox{-}\mathbb{X}(\mathbf{q}; {\boldsymbol\mu})$.
Now for any $X\in{\rm coh}\mbox{-}\mathbb{X}(\mathbf{p}; {\boldsymbol\lambda})$,
$$
\langle\Phi\circ F(X), \Phi((S, \alpha^{(\vec{h})}))\rangle
=\langle F(X), (S, \alpha^{(\vec{h})})\rangle=\langle X, U(S, \alpha^{(\vec{h})})\rangle
=\langle X, S\rangle.
$$
That is, ${\rm rk}(\Phi\circ F(X)) ={\rm rk}(X)$, we are done.

(2) $\Phi\circ F$ preserves line bundles. In particular, we can assume $\Phi\circ F$ preserves the structure sheaves.

In fact, for any line bundle $\co(\vec{x})\in\coh\bbX(\bfp,\bfla)$, $F(\co(\vec{x}))=(\bigoplus_{\vec{h}\in H}\co(\vec{x}+\vec{h}), \varepsilon)$. Observe that $\co(\vec{x}+\vec{h}_1)\not\cong\co(\vec{x}+\vec{h}_2)$ for any $\vec{h}_1\neq \vec{h}_2\in H$. Hence $F(\co(\vec{x}))$ is indecomposable, and so does $\Phi\circ F(\co(\vec{x}))$.
According to (1), $\Phi\circ F$ is rank-preserving, hence $\Phi\circ F(\co(\vec{x}))$ is a line bundle.
In particular, up to a degree-shift automorphism of ${\rm coh}\mbox{-}\mathbb{X}(\mathbf{q}; {\boldsymbol\mu})$, we can assume that $\Phi\circ F$ preserves the structure sheaves.

(3) For any simple sheaf $S\in\coh\bbX(\bfp,\bfla)$ with $\tau$-period $r$, the Auslander-Reiten component $\cT_{S}$ of $\coh\bbX(\bfp,\bfla)$ containing $S$ is a non-homogeneous tube of rank $r$. Clearly, the $H$-action on $\coh\bbX(\bfp,\bfla)$ restricts to an $H$-action on $\cT_{S}$. Assume the stablizer subgroup of $S$ has order $m$, then the $H$-orbit of $S$ contains $\frac{n}{m}$-members, where $n=|H|$. By a general result of a group action on a tube (equivalent to the category of nilpotent representations on a cyclic quiver, c.f. \cite{RR}), we obtain that $\Phi\circ F(S)$ splits into $m$-many simples $S_1, S_2,\cdots, S_m$, where each $S_i$ has $\tau$-period $\frac{r}{\frac{n}{m}}=\frac{rm}{n}$. Moreover, $\Phi\circ F$ sends the $r$-tube $\cT_{S}$ to $m$-many $\frac{rm}{n}$-tubes. In particular, if $r=1$, i.e. $\cT_{S}$ is a homogeneous tube, then $m=n$ and $\Phi\circ F$ sends $\cT_{S}$ to $n$-many homogeneous tubes; if $\frac{n}{m}=r$, i.e. the $H$-action on $\cT_{S}$ is transitive, then $\Phi\circ F$ sends the $r$-tube $\cT_{S}$ to $m$-many homogeneous tubes.

(4) For any $1\leq i\leq t$, let $S_i\in\coh\bbX(\bfp,\bfla)$ be an exceptional simple sheaf concentrated in $x_i$. Assume the stablizer subgroup of $S_i$ has order $m_i$ under the $H$-action. Then by (3) we have $\Phi\circ F(S_i)=S_{i_1}\oplus S_{i_2}\oplus \cdots\oplus S_{i_{m_i}}$, where $S_{i_j}$'s are simples belonging to $m_i$-many pairwise distinct orthogonal $\frac{p_im_i}{n}$-tubes. Write $\det(S_{i_j})=\vec{z}_{i_j}$ for $1\leq j\leq m_i$, then $\vec{z}_{i_j}$ is either a generator of $\bbL(\bf q)$ or equal to $\vec{d}$ according to $\frac{p_im_i}{n}>1$ or not. In particular, we have $\frac{p_im_i}{n}\vec{z}_{i_j}=\vec{d}$.

We claim that the assignments $\pi(\vec{x}_i)=\sum\limits_{j=1}^{m_i}\vec{z}_{i_j}$ for $1\leq i\leq t$ linearly extend to a group homomorphism $\pi\colon \bbL(\mathbf{p})\rightarrow \bbL(\mathbf{q})$. In fact, it suffices to show that $\pi(p_i\vec{x}_i)=\pi(p_j\vec{x}_j)$ for any $1\leq i<j\leq t$, which follows from the following equations for any $i$:
$$\pi(p_i\vec{x}_i)=\sum\limits_{j=1}^{m_i}p_i\vec{z}_{i_j}=\sum\limits_{j=1}^{m_i}\frac{n}{m_i}\cdot\frac{p_im_i}{n}\vec{z}_{i_j} =n\vec{d}.$$

(5) $\Phi\circ F(\co(\vec{x}))=\co(\pi(\vec{x}))$ for any $\vec{x}\in \bbL(\bf p)$.

If $\vec{x}=0$, according to (2) we have $\Phi\circ F(\co)=\co=\co(\pi(0))$. Then by induction, it suffices to show that $\Phi\circ F(\co(\vec{x}\pm\vec{x}_i))=\co(\pi(\vec{x}\pm\vec{x}_i))$ provided $\Phi\circ F(\co(\vec{x}))=\co(\pi(\vec{x}))$.

Let $S_i$ be the unique simple sheaf concentrated in $\lambda_i$ fitting into the following exact sequence:
$$0\to \co(\vec{x})\to \co(\vec{x}+\vec{x}_i)\to S_{i}\to 0.$$
By applying the exact functor $\Phi\circ F$, we obtain an exact sequence as follows:
$$0\to \Phi\circ F(\co(\vec{x}))\to \Phi\circ F(\co(\vec{x}+\vec{x}_i))\to \Phi\circ F(S_i)\to 0.$$
By assumption we have $\Phi\circ F(\co(\vec{x}))=\co(\pi(\vec{x}))$, it follows that $\det(\co(\pi(\vec{x})))=\pi(\vec{x})$.
According to (4), $\Phi\circ F(S_i)=\bigoplus\limits_{j=1}^{m_i}S_{i_j}$, hence
$\det(\bigoplus\limits_{j=1}^{m_i}S_{i_j})=\sum\limits_{j=1}^{m_i}\vec{z}_{i_j}=\pi(\vec{x}_i)$. Therefore,
$\det(\Phi\circ F(\co(\vec{x}+\vec{x}_i)))=\pi(\vec{x})+\pi(\vec{x}_i)=\pi(\vec{x}+\vec{x}_i).$
Moreover, by (2) we know that $\Phi\circ F(\co(\vec{x}+\vec{x}_i))$ is a line bundle, hence equal to $\co(\pi(\vec{x}+\vec{x}_i))$.
Similarly, we can prove $\Phi\circ F(\co(\vec{x}-\vec{x}_i))=\co(\pi(\vec{x}-\vec{x}_i))$.

(6) $\ker\pi=H$.

Observe that $\vec{x}\in H$ if and only if $\co$ and $\co(\vec{x})$ belong to the same $H$-orbit, if and only if $F(\co(\vec{x}))=F(\co)$, or equivalently, $\Phi\circ F(\co(\vec{x}))=\Phi\circ F(\co)$. Therefore, according to (5), $\vec{x}\in H$ if and only if $\co(\pi(\vec{x}))=\co$, if and only if $\pi(\vec{x})=0$, i.e. $\vec{x}\in\ker \pi$. Hence $\ker\pi=H$.

(7) $\pi$ is an admissible homomorphism.

Firstly we claim that $\im\pi$ is an effective subgroup of $\bbL(\bf q)$.
Observe that $\im\pi$ is generated by $\pi(\vec{x}_i)=\sum\limits_{j=1}^{m_i}\vec{z}_{i_j},\;1\leq i\leq t$. Then by definition, it suffices to show that $\bbL(\mathbf{q})$ is generated by these $\vec{z}_{i_j}$'s.

In fact, for any generator $\vec{z}_i$ of $\bbL(\bf q)$, let $S'_i$ be an exceptional simple sheaf in $\coh\bbX(\bfq,\bfmu)$ concentrated in $\mu_i$. Then
$\Phi^{-1}(S'_i)$ is simple in $({\rm coh}\mbox{-}\mathbb{X}(\mathbf{p}; {\boldsymbol\lambda}))^{H}$, hence $U(\Phi^{-1}(S'_i))$ is semisimple in ${\rm coh}\mbox{-}\mathbb{X}(\mathbf{p}; {\boldsymbol\lambda})$, where all the simple direct summands are in the same $H$-orbit, in particular, they belong to the same tube of ${\rm coh}\mbox{-}\mathbb{X}(\mathbf{p}; {\boldsymbol\lambda})$. For any indecomposable direct summand $S_i$ of $U(\Phi^{-1}(S'_i))$, we have that $S'_i$ is a direct summand of $\Phi\circ F(S_i)=S_{i_1}\oplus S_{i_2}\oplus \cdots\oplus S_{i_{m_i}}$. Hence $\vec{z}_i=\vec{z}_{i_j}$ for some $1\leq j\leq m_i$. This finishes the proof of the claim.

Secondly, for any $\vec{z}\in \im\pi$, fix a preimage $\vec{x}\in\pi^{-1}(\vec{z})$, then we have
\begin{flalign}
\sum_{\vec{x}'\in \pi^{-1}(\vec{z})}{\rm mult}(\vec{x}')=&\sum_{\vec{x}'\in \pi^{-1}(\vec{z})}\dim_{\bf k}S({\bf p}; {\boldsymbol\lambda})_{\vec{x}'}\nonumber\\
=&\sum_{\vec{h}\in \ker\pi}\dim_{\bf k}S({\bf p}; {\boldsymbol\lambda})_{\vec{x}+\vec{h}}\nonumber\\
=& \dim_{\bf k}\Hom_{\coh\bbX(\bfp, \bfla)}(\co,\bigoplus\limits_{\vec{h}\in H}\co(\vec{x}+\vec{h}))\nonumber\\
=& \dim_{\bf k}\Hom_{\coh\bbX(\bfp, \bfla)}(\co, U\circ F(\co(\vec{x})))\nonumber\\
 =& \dim_{\bf k}\Hom_{\coh\bbX(\bfp, \bfla)^H}(F\co, F(\co(\vec{x})))\nonumber\\
 =& \dim_{\bf k}\Hom_{\coh\bbX(\bfq, \bfmu)}(\Phi\circ F(\co), \Phi\circ F(\co(\vec{x})))\nonumber\\
 =& \dim_{\bf k}\Hom_{\coh\bbX(\bfq, \bfmu)}(\co, \co(\pi(\vec{x})))\nonumber\\
 =&\dim_{\bf k}S({\bf q}, \mu)_{\pi(\vec{x})}\nonumber\\
 =&{\rm mult}(\vec{z}).\nonumber
\end{flalign}
Therefore, $\pi$ is admissile. Then we finish the proof.
\end{proof}

\section{Classification of Equivariant equivalences}

In this section, we aim to classify all the equivariant relations induced by degree-shift actions between the categories of coherent sheaves over weighted projective lines of domestic types and of tubular types respectively.

\subsection{Domestic types}
Let $\bbX(\bf p; {\boldsymbol\lambda})$ be a weighted projective line of domestic type. Then $\mathbf{p}=(),(p), (p_{1}, p_{2})$, $(2,2,p_3)$, $(2,3,3)$, $(2,3,4)$ or $(2,3,5)$ up to permutation. By our normalization assumption, we have $\lambda_1=\infty, \lambda_2=0$ and $\lambda_3=1$. Hence we can simply write $\bbX(\bf p; {\boldsymbol\lambda})=\bbX(\bf p)$.
As an immediate consequence of the Theorem \ref{from admissible to equivariant} and Theorem \ref{from equivariant to admissible}, we have:

\begin{thm} \label{Prop:5.2} Assume $\bbL(\bf p)$ and $\bbL(\bf q)$ are both of domestic types. Let $H$ be a finite subgroup of $\bbL(\bf p)$. Then the following statements are equivalent:
\begin{itemize}
  \item [(1)] there exists an equivalence $({\rm coh}\mbox{-}\mathbb{X}(\mathbf{p}))^{H}\stackrel{\sim}\longrightarrow {\rm coh}\mbox{-}\mathbb{X}(\mathbf{q})$;
  \item [(2)] there exists an admissible homomorphism $\pi\colon \bbL(\mathbf{p})\rightarrow \bbL(\mathbf{q})$ with $\ker\pi=H$.
\end{itemize}
\end{thm}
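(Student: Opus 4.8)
The plan is to obtain the result directly from the two main theorems of Section~4, so the proof will be very short. First I would record the key simplifying observation: since $\bbL(\mathbf{p})$ and $\bbL(\mathbf{q})$ are of domestic type, each of $\mathbf{p}$ and $\mathbf{q}$ has at most three entries $\geq 2$; after normalization (and after deleting the weight-$1$ entries, which change neither the string group nor the category of coherent sheaves) the parameter sequences are then uniquely determined. This is precisely the convention $\mathbb{X}(\mathbf{p};{\boldsymbol\lambda})=\mathbb{X}(\mathbf{p})$ for $t\leq 3$, and it is what lets one drop the parameter sequences from the notation in both statements (1) and (2).

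For the implication (2) $\Rightarrow$ (1) I would take an admissible homomorphism $\pi\colon \bbL(\mathbf{p})\rightarrow\bbL(\mathbf{q})$ with $\ker\pi=H$ and apply Theorem~\ref{from admissible to equivariant} with the (unique normalized) parameter sequence ${\boldsymbol\lambda}$ of $\mathbb{X}(\mathbf{p})$: this produces a parameter sequence ${\boldsymbol\mu}$ together with an equivalence $({\rm coh}\mbox{-}\mathbb{X}(\mathbf{p}))^{\ker\pi}\stackrel{\sim}\longrightarrow {\rm coh}\mbox{-}\mathbb{X}(\mathbf{q}; {\boldsymbol\mu})$. By the rigidity just noted, $\mathbb{X}(\mathbf{q}; {\boldsymbol\mu})=\mathbb{X}(\mathbf{q})$, and since $\ker\pi=H$ this is exactly statement (1).

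For the implication (1) $\Rightarrow$ (2) I would feed the given equivalence $({\rm coh}\mbox{-}\mathbb{X}(\mathbf{p}))^{H}\stackrel{\sim}\longrightarrow {\rm coh}\mbox{-}\mathbb{X}(\mathbf{q})$ into Theorem~\ref{from equivariant to admissible} (the relevant $H$-action being the degree-shift action), which immediately outputs an admissible homomorphism $\pi\colon \bbL(\mathbf{p})\rightarrow\bbL(\mathbf{q})$ with $\ker\pi=H$; that the target $\bbL(\mathbf{q})$ is again domestic is automatic by Proposition~\ref{pi omega}, though it is also part of the hypothesis here.

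I do not expect any genuine obstacle: the theorem is a direct corollary of Theorems~\ref{from admissible to equivariant} and~\ref{from equivariant to admissible}. The only point deserving a sentence is the absence of continuous parameters for domestic weighted projective lines, which is what makes the clean formulation in terms of $\mathbf{p}$ and $\mathbf{q}$ alone legitimate.
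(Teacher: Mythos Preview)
Your proposal is correct and matches the paper's approach exactly: the paper states the theorem as ``an immediate consequence of Theorem~\ref{from admissible to equivariant} and Theorem~\ref{from equivariant to admissible}'' with no further argument. Your extra sentence explaining why the domestic assumption kills the parameter dependence (at most three weights, so the normalized $\boldsymbol\lambda$ is forced) is a useful clarification that the paper leaves implicit in its convention $\mathbb{X}(\mathbf{p};\boldsymbol\lambda)=\mathbb{X}(\mathbf{p})$ for $t\leq 3$.
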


Therefore, for weighted projective lines of domestic types, we can use the admissible homomorphisms to classify the equivariant relations induced by degree-shift actions between the categories of coherent sheaves, see Figure \ref{fig:admissible for domestic introduction} in the Introduction Section.

\subsection{Tubular types}

Let $\bbX(\bf p; {\boldsymbol\lambda})$ be a weighted projective line of tubular type.
Then $\mathbf{p}=(2,2,2,2)$, $(3,3,3)$, $(4,4,2)$ or $(6,3,2)$ up to permutation. By our normalization assumption, we can write a weighted projective line of type (2,2,2,2) as $\mathbb{X}(2,2,2,2;\lambda)$ with $\lambda\in\mathbf{k}\backslash\{0,1\}$.

In this subsection, we will classify all the equivariant relations induced by degree-shift actions between tubular types, where the parameter $\lambda$ plays a key role. For this we introduce the notation $\Gamma(\lambda)$ to denote the following multiset $$\{\lambda, \frac{1}{\lambda},1-\lambda,\frac{1}{1-\lambda},\frac{\lambda}{\lambda-1}, \frac{\lambda-1}{\lambda}\}.$$
For example,
$\Gamma(-1)=\{-1,-1,2,\frac{1}{2},\frac{1}{2},2\}$
and
$\Gamma(\omega)=\{\omega, -\omega^2, -\omega^2, \omega,-\omega^2, \omega\}$ for $\omega=\frac{1+ \sqrt{-3}}{2}$.
It is easy to see that $\mu\in\Gamma(\lambda)$ if and only if $\Gamma(\lambda)=\Gamma(\mu)$.

\subsubsection{From $(2,2,2,2)$ to $(2,2,2,2)$}

First we consider the equivariant relations for weight type (2,2,2,2) with various parameters.

The following result seems to be well-known to experts, but we can not find a concrete proof in the literature. For the convenience of the reader, we include a proof by using admissible homomorphisms.

\begin{prop}\label{2222ker=0}
There exists an equivalence
$${\rm coh}\mbox{-}\mathbb{X}(2,2,2,2;\lambda)\stackrel{\sim}\longrightarrow {\rm coh}\mbox{-}\mathbb{X}(2,2,2,2;\mu)$$ if and only $\Gamma(\lambda)=\Gamma(\mu)$.
\end{prop}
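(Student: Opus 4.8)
The plan is to deduce both implications from the correspondence between ($\mathbf{k}$-linear) equivalences and admissible homomorphisms established above, using that an ordinary equivalence is exactly an equivariant equivalence for the trivial group $H=0$.

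\textbf{The ``only if'' direction.} Suppose $\Phi\colon\coh\bbX(2,2,2,2;\lambda)\stackrel{\sim}{\longrightarrow}\coh\bbX(2,2,2,2;\mu)$ is an equivalence. Viewing it as an equivariant equivalence for $H=0$ and applying Theorem~\ref{from equivariant to admissible}, we obtain an admissible homomorphism $\pi\colon\bbL(2,2,2,2)\to\bbL(2,2,2,2)$ with $\ker\pi=0$; by Lemma~\ref{cor:3.5}, $\pi=\pi_\sigma$ for some permutation $\sigma$ of $\{1,2,3,4\}$. The proof of Theorem~\ref{from equivariant to admissible}, specialised to $F=\mathrm{Id}$, then shows that after composing $\Phi$ with a suitable degree-shift automorphism of $\coh\bbX(2,2,2,2;\mu)$ we may assume $\Phi(\co)=\co$ and $\Phi(\co(\vec{x}))=\co(\pi_\sigma(\vec{x}))$ for every $\vec{x}\in\bbL(2,2,2,2)$. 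Since $\pi_\sigma(\vec{c})=\vec{c}$, applying $\Hom(\co,-)$ gives a linear isomorphism $\Phi_{\vec{c}}\colon S(2,2,2,2;\lambda)_{\vec{c}}\stackrel{\sim}{\longrightarrow}S(2,2,2,2;\mu)_{\vec{c}}$ of the two-dimensional spaces of global sections. The next step is an intrinsic description of the four lines $\mathbf{k}x_i^{2}$ of $S(2,2,2,2;\lambda)_{\vec{c}}$: for $0\neq f\in S(2,2,2,2;\lambda)_{\vec{c}}$, the torsion sheaf $\co(\vec{c})/f\co$ lies in the Serre subcategory generated by the exceptional simple sheaves if and only if $\mathbf{k}f$ is one of $\mathbf{k}x_1^{2},\dots,\mathbf{k}x_4^{2}$ (for any other $f$ the support of $\co(\vec{c})/f\co$ consists of non-exceptional points, using $\vec{c}=2\vec{x}_i$ and that $x_i$ spans $S(2,2,2,2;\lambda)_{\vec{x}_i}$). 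An equivalence carries exceptional tubes to exceptional tubes, so $\Phi_{\vec{c}}$ sends the configuration $\{[x_1^{2}],[x_2^{2}],[x_3^{2}],[x_4^{2}]\}$ in $\bbP\big(S(2,2,2,2;\lambda)_{\vec{c}}\big)\cong\bbP^1_{\mathbf{k}}$ to $\{[z_1^{2}],[z_2^{2}],[z_3^{2}],[z_4^{2}]\}$ by a projective isomorphism. Finally, the relations $x_3^{2}=x_2^{2}-x_1^{2}$, $x_4^{2}=x_2^{2}-\lambda x_1^{2}$ (and their analogues for the $z_j$) give that the unordered cross-ratio of the first configuration is $\Gamma(\lambda)$ and that of the second is $\Gamma(\mu)$; projective invariance of the cross-ratio forces $\Gamma(\lambda)=\Gamma(\mu)$.

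\textbf{The ``if'' direction.} Assume $\Gamma(\lambda)=\Gamma(\mu)$, i.e.\ $\mu$ lies in the $\Gamma$-orbit of $\lambda$. Each element of $\Gamma(\lambda)$ arises from $\lambda$ by permuting the four marked points $\{\infty,0,1,\lambda\}\subset\bbP^1_{\mathbf{k}}$ and renormalising; the resulting action on the parameter factors through $S_4/V_4\cong S_3$, has orbit precisely the six elements of $\Gamma(\lambda)$, and is generated by the classes of the transpositions (e.g.\ $(1,2)$ and $(2,3)$ induce $\lambda\mapsto 1/\lambda$ and $\lambda\mapsto 1-\lambda$, which already generate the group). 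For each transposition $(i,j)$, Lemma~\ref{lemma for permutation ij} supplies a parameter sequence and an equivalence $\coh\bbX(2,2,2,2;\lambda')\stackrel{\sim}{\longrightarrow}\coh\bbX(2,2,2,2;\mu')$ with $\mu'$ the swapped-and-renormalised parameter. Writing an element of the group that sends $\lambda$ to $\mu$ as a word in transpositions and composing the corresponding equivalences --- tracking the parameter by composing the associated linear fractional transformations --- yields an equivalence $\coh\bbX(2,2,2,2;\lambda)\stackrel{\sim}{\longrightarrow}\coh\bbX(2,2,2,2;\mu)$. (Alternatively, this follows from Theorem~\ref{from admissible to equivariant} applied to $\pi_\sigma$, while keeping track of the output parameter sequence.)

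\textbf{Expected main obstacle.} The ``if'' direction is routine bookkeeping with linear fractional transformations. The content lies in the ``only if'' direction: one must carefully extract from the proof of Theorem~\ref{from equivariant to admissible} that, after the degree-shift normalisation, $\Phi$ fixes $\co$ and sends each $\co(\vec{x})$ to $\co(\pi_\sigma(\vec{x}))$ (so that $\Phi_{\vec{c}}$ is defined), and one must justify the purely categorical characterisation of the four lines $\mathbf{k}x_i^{2}$ via the torsion sheaves $\co(\vec{c})/f\co$. Granting these, the cross-ratio comparison is elementary.
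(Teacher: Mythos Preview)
Your proposal is correct, and the ``if'' direction is essentially the paper's approach (the paper just writes out the six cases in a table rather than composing transpositions, but this is cosmetic). The ``only if'' direction, however, is genuinely different from the paper's and worth comparing.

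The paper argues as follows: any equivalence $\Phi$ commutes with the Auslander--Reiten translation, hence with the degree-shift by $\vec{\omega}$; it therefore induces an equivalence of $\mathbb{Z}\vec{\omega}$-equivariant categories, which by \cite[Theorem~7.7]{CCZ} are $\coh\mathbb{E}(\lambda)$ and $\coh\mathbb{E}(\mu)$; hence $\mathbb{E}(\lambda)\cong\mathbb{E}(\mu)$, which forces $\Gamma(\lambda)=\Gamma(\mu)$. This is short but imports two external facts (the equivariant description of the elliptic curve and the classification of elliptic curves by $j$-invariant).

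Your route is more self-contained: you extract from the proof of Theorem~\ref{from equivariant to admissible} (with $H=0$) that, after a degree-shift, $\Phi(\co(\vec{x}))=\co(\pi_\sigma(\vec{x}))$, and then run a cross-ratio argument on the two-dimensional space $S_{\vec{c}}$. The key categorical step --- that $\mathbf{k}f=\mathbf{k}x_i^{2}$ iff $\co(\vec{c})/f\co$ is supported in exceptional tubes --- is correct (since $S_{\vec{x}_i}=\mathbf{k}x_i$ is one-dimensional, $x_i\mid f$ forces $f\in\mathbf{k}x_i^{2}$), and an equivalence preserves non-homogeneous tubes because it commutes with $\tau$. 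In fact your argument gives slightly more: since $\Phi(\co(\vec{x}_i))=\co(\vec{z}_{\sigma(i)})$ and $\dim S_{\vec{x}_i}=1$, one sees directly that $\Phi_{\vec{c}}([x_i^{2}])=[z_{\sigma(i)}^{2}]$, so the cross-ratio comparison is immediate. Your approach avoids the elliptic-curve detour at the cost of unpacking Theorem~\ref{from equivariant to admissible}; the paper's approach is terser but relies on heavier background.
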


\begin{proof}
Assume we have an equivalence $\Phi: {\rm coh}\mbox{-}\mathbb{X}(2,2,2,2;\lambda)\stackrel{\sim}\longrightarrow {\rm coh}\mbox{-}\mathbb{X}(2,2,2,2;\mu)$. Then $\Phi$ commutes with the Auslander-Reiten translations on both sides, which are given by the degree-shift of the dualzing elements $\vec{\omega}$ and $\vec{\omega}'$ respectively. Hence, $\Phi\cdot(\vec{\omega})=(\vec{\omega}')\cdot\Phi$. It follows that $\Phi$ induces an equivalence between the equivariant categories: $$\Phi: {\rm coh}\mbox{-}\mathbb{X}(2,2,2,2;\lambda)^{\mathbb{Z}\vec{\omega}}\stackrel{\sim}\longrightarrow {\rm coh}\mbox{-}\mathbb{X}(2,2,2,2;\mu)^{\mathbb{Z}\vec{\omega}'}.$$ Recall that there is an equivalence ${\rm coh}\mbox{-}\mathbb{X}(2,2,2,2;\lambda)^{\mathbb{Z}\vec{\omega}}\cong \coh\mathbb{E}(\lambda)$ for the elliptic curve $\mathbb{E}(\lambda)$ associated to $\lambda$ by \cite[Theorem 7.7]{CCZ}. Thus $\Phi$ induces an equivalence $$ {\rm coh}\mbox{-}\mathbb{E}(\lambda)\stackrel{\sim}\longrightarrow {\rm coh}\mbox{-}\mathbb{E}(\mu).$$ It follows that $\mathbb{E}(\lambda)\cong\mathbb{E}(\mu)$. Hence we get $\Gamma(\lambda)=\Gamma(\mu)$.

On the other hand, assume $\Gamma(\lambda)=\Gamma(\mu)$, i.e. $\mu\in\Gamma(\lambda)$.
For each $\mu$, we define an admissible homomorphism $\pi$ on $\bbL(2,2,2,2)$ and an algebra homomorphism $\phi: S(2,2,2,2;\lambda)\rightarrow S(2,2,2,2;\mu)$ on generators as below.
\begin{table}[ht]
\begin{tabular}{|c|c|c|}
  \hline
  $\mu$&$\pi(\vec{x}_1,\vec{x}_2,\vec{x}_3,\vec{x}_4)$&$\phi(x_1,x_2,x_3,x_4)$\\
  \hline
  $\lambda$ & $(\vec{z}_{1},\vec{z}_{2},\vec{z}_{3},\vec{z}_{4})$& $(z_{1},z_{2},z_{3},z_{4})$\\
  \hline
  $\frac{1}{\lambda}$&$(\vec{z}_{2},\vec{z}_{1},\vec{z}_{3},\vec{z}_{4})$&$(z_{2},z_{1}, \sqrt{-1} z_{3},\sqrt{-\lambda} z_{4})$\\
  \hline
  $1-\lambda$&$(\vec{z}_{4},\vec{z}_{2},\vec{z}_{3},\vec{z}_{1})$&$(z_{4},\sqrt{ \lambda} z_{2}, \sqrt{\lambda-1 }z_{3}, \sqrt{\lambda(1-\lambda)} z_{1})$\\
  \hline
  $\frac{1}{1-\lambda}$&$(\vec{z}_{2},\vec{z}_{3},\vec{z}_{1},\vec{z}_{4})$&$(z_{2},z_{3}, \sqrt{-1} z_{1},\sqrt{1-\lambda}z_{4})$\\
  \hline
  $\frac{\lambda}{\lambda-1}$&$(\vec{z}_{3},\vec{z}_{2},\vec{z}_{1},\vec{z}_{4})$&$(z_{3},z_{2},z_{1},\sqrt{1-\lambda} z_{4})$\\
  \hline
  $\frac{\lambda-1}{\lambda}$&$(\vec{z}_{2},\vec{z}_{4},\vec{z}_{3},\vec{z}_{1})$&$(z_{2},\sqrt{\lambda} z_{4}, \sqrt{\lambda-1} z_{3},\sqrt{1-\lambda} z_{1})$\\
  \hline
  \end{tabular}
\end{table}

\noindent It is easy to see that each $\phi$ in the above table is compatible with $\pi$, and induces a surjective homomorphism between $\im\pi$-graded algebras
$$\bar{\phi}\colon \pi_*S(2,2,2,2;\lambda)\to S(2,2,2,2;\mu)_{\im\pi},$$
which yields an equivalence ${\rm coh}\mbox{-}\mathbb{X}(2,2,2,2;\lambda)\stackrel{\sim}\longrightarrow {\rm coh}\mbox{-}\mathbb{X}(2,2,2,2;\mu)$ by Proposition \ref{Prop:2.3}.
\end{proof}

Let $f(x)=(\frac{x+1}{x-1})^2$ for any $x\neq 1$. It is easy to see that $f(\frac{1}{x})=f(x)$ and $f(-x)=\frac{1}{f(x)}$ for $x\neq \pm 1$. Then for any $\lambda\in \mathbf{k}\setminus\{0,1\}$, we have $f(\sqrt{1-\lambda})=f(\frac{1}{\sqrt{1-\lambda}})=(\frac{\sqrt{1-\lambda}+1}{\sqrt{1-\lambda}-1})^2$ and
$f(\sqrt{\frac{\lambda-1}{\lambda}})=f(\sqrt{\frac{\lambda}{\lambda-1}})=(\frac{\sqrt{\lambda-1}+\sqrt{\lambda}}{\sqrt{\lambda-1}-\sqrt{\lambda}})^2$.
Moreover, $f(\sqrt{f(\sqrt{\lambda})})=\lambda$ or $\frac{1}{\lambda}$, in particular, $\Gamma(f(\sqrt{f(\sqrt{\lambda})}))=\Gamma(\lambda)$.

In the next two propositions we assume $\{i,j,k,l\}=\{1,2,3,4\}$.

\begin{prop}\label{2222ker=2}
There exists an equivalence
$$({\rm coh}\mbox{-}\mathbb{X}(2,2,2,2;\lambda))^{\langle \vec{x}_i-\vec{x}_j\rangle}\stackrel{\sim}\longrightarrow {\rm coh}\mbox{-}\mathbb{X}(2,2,2,2;\mu)$$ if and only if $\Gamma(\mu)=\Gamma(f(\sqrt{\lambda'}))$ for some $\lambda'\in\Gamma(\lambda)$.
\end{prop}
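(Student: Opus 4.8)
The plan is to reduce the general pair $\{i,j\}$ to the distinguished case $\{i,j\}=\{1,2\}$, where the equivalence and the precise value of the new parameter have already been made explicit in Example \ref{ker=x1-x2}, and then to transport everything along a permutation‑induced equivalence while keeping careful track of how the parameter moves inside $\Gamma(\lambda)$.

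First I would record the shape of the problem. Since $p_i=p_j=2$, the subgroup $\langle\vec{x}_i-\vec{x}_j\rangle$ is of Cyclic type and of order $2$, so by Proposition \ref{from subgroup to admissible homomorphism} together with Proposition \ref{pi omega} any admissible homomorphism with this kernel lands in a string group of type $(2,2,2,2)$; hence the right‑hand side must be of the form ${\rm coh}\mbox{-}\mathbb{X}(2,2,2,2;\mu)$, and by Proposition \ref{2222ker=0} its equivalence class is detected exactly by the multiset $\Gamma(\mu)$. Next I would fix a permutation $\sigma$ of $\{1,2,3,4\}$ with $\sigma(\{i,j\})=\{1,2\}$. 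Writing $\sigma$ as a product of transpositions and applying Lemma \ref{lemma for permutation ij} and Proposition \ref{decomposition of category}, one obtains a parameter $\lambda'\in\Gamma(\lambda)$ (the one obtained from $\lambda$ by renormalising the four marked points after $\sigma$) together with an algebra homomorphism compatible with $\pi_\sigma$ that induces an equivalence $\Psi\colon {\rm coh}\mbox{-}\mathbb{X}(2,2,2,2;\lambda)\stackrel{\sim}\longrightarrow{\rm coh}\mbox{-}\mathbb{X}(2,2,2,2;\lambda')$. Crucially, $\Psi$ intertwines the degree‑shift action by $\vec{x}$ with the degree‑shift action by $\pi_\sigma(\vec{x})$, so it restricts to an equivalence of equivariant categories $\big({\rm coh}\mbox{-}\mathbb{X}(2,2,2,2;\lambda)\big)^{\langle\vec{x}_i-\vec{x}_j\rangle}\stackrel{\sim}\longrightarrow\big({\rm coh}\mbox{-}\mathbb{X}(2,2,2,2;\lambda')\big)^{\langle\vec{x}_1-\vec{x}_2\rangle}$, exactly as in the proof of Theorem \ref{from admissible to equivariant}.

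Now I would invoke Example \ref{ker=x1-x2}, which identifies $\big({\rm coh}\mbox{-}\mathbb{X}(2,2,2,2;\lambda')\big)^{\langle\vec{x}_1-\vec{x}_2\rangle}$ with ${\rm coh}\mbox{-}\mathbb{X}(2,2,2,2;f(\sqrt{\lambda'}))$. Composing the three equivalences yields $\big({\rm coh}\mbox{-}\mathbb{X}(2,2,2,2;\lambda)\big)^{\langle\vec{x}_i-\vec{x}_j\rangle}\cong{\rm coh}\mbox{-}\mathbb{X}(2,2,2,2;f(\sqrt{\lambda'}))$ with $\lambda'\in\Gamma(\lambda)$. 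For the ``only if'' direction, running this chain backwards and applying Proposition \ref{2222ker=0} forces $\Gamma(\mu)=\Gamma(f(\sqrt{\lambda'}))$; for the ``if'' direction, given $\Gamma(\mu)=\Gamma(f(\sqrt{\lambda'}))$ one uses Proposition \ref{2222ker=0} once more to replace $f(\sqrt{\lambda'})$ by $\mu$. Along the way I would note that $f(\sqrt{\lambda'})$ is insensitive both to the two choices of square root and to replacing $\lambda'$ by $1/\lambda'$, since $f(x)=f(1/x)$; this is what makes the value attached to the pair $\{i,j\}$ well defined up to $\Gamma$, and one should also check $f(\sqrt{\lambda'})\in\mathbf{k}\setminus\{0,1\}$ so that the target is an honest weighted projective line.

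The routine but genuinely fiddly step — the one I expect to be the main obstacle — is the identification of the parameter: pinning down which element $\lambda'\in\Gamma(\lambda)$ is produced by the renormalisation after $\sigma$, i.e.\ verifying that it is the cross‑ratio of the four marked points with $\{i,j\}$ separated from $\{k,l\}$, and that different admissible choices of $\sigma$ change $f(\sqrt{\lambda'})$ only within its $\Gamma$‑class. This is Möbius‑normalisation bookkeeping of precisely the type already carried out in the table of Proposition \ref{2222ker=0} and in the case analysis of Lemma \ref{lemma for permutation ij}, and it requires no new idea beyond those.
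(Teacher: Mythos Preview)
Your proposal is correct and follows essentially the same route as the paper: reduce via a permutation $\sigma$ sending $\{i,j\}$ to $\{1,2\}$ (the paper invokes Theorem \ref{from admissible to equivariant} and Proposition \ref{2222ker=0} where you invoke Lemma \ref{lemma for permutation ij} directly, which amounts to the same thing), then apply Example \ref{ker=x1-x2}, compose via Proposition \ref{Prop:3.13}/\ref{decomposition of category}, and finish with Proposition \ref{2222ker=0}. Your closing remark about checking that different admissible choices of $\sigma$ only move $f(\sqrt{\lambda'})$ within its $\Gamma$-class is a point the paper leaves implicit; it is indeed needed for the ``if'' direction to match the existential statement, and your observation that $f(x)=f(1/x)$ handles it.
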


\begin{proof}
Let $\sigma_{ij}$ be a permutation on $\{1,2,3,4\}$ with $\sigma_{ij}(i)=1$ and $\sigma_{ij}(j)=2$. Let $\pi_{\sigma_{ij}}$ be the automorphism on the group $\bbL(2,2,2,2)$ defined by $\pi_{\sigma_{ij}}(\vec{x}_l)=\vec{x}_{\sigma_{ij}(l)}$ for $1\leq l\leq 4$. It is easy to check that $\pi_{\sigma_{ij}}$ is admissible with kernel equal to zero. According to Theorem \ref{from admissible to equivariant} and Proposition \ref{2222ker=0}, there exist some $\lambda'\in\Gamma(\lambda)$ and an algebra homomorphism $\phi_{\sigma_{ij}}: S(2,2,2,2; \lambda)\to S(2,2,2,2; \lambda')$, which is compatible with $\pi_{\sigma_{ij}}$ and induces a surjective homomorphism $\overline{\phi_{\sigma_{ij}}}$, moreover, there is an equivalence $${\rm coh}\mbox{-}\mathbb{X}(2,2,2,2;\lambda)\stackrel{\sim}\longrightarrow {\rm coh}\mbox{-}\mathbb{X}(2,2,2,2;\lambda').$$

Recall from Example \ref{ker=x1-x2} that there exist an admissible homomorphism $\pi_{12}$ on $\bbL(2,2,2,2)$ with $\ker\pi_{12}={\langle \vec{x}_1-\vec{x}_2\rangle}$ and an algebra homomorphism $\phi_{12}: S(2,2,2,2; \lambda')\to S(2,2,2,2; f(\sqrt{\lambda'}))$, which is compatible with $\pi_{12}$ and induces a surjective homomorphism $\overline{\phi_{12}}$ and an equivalence $${\rm coh}\mbox{-}\mathbb{X}(2,2,2,2;\lambda')^{\langle \vec{x}_1-\vec{x}_2\rangle}\stackrel{\sim}\longrightarrow {\rm coh}\mbox{-}\mathbb{X}(2,2,2,2;f(\sqrt{\lambda'})).$$

Now according to Proposition \ref{Prop:3.13}, the composition $\pi_{ij}:=\pi_{12}\pi_{\sigma_{ij}}$ is admissible with $\ker\pi_{ij}=\langle \vec{x}_i-\vec{x}_j\rangle$. Then By Proposition \ref{decomposition of category},
$\overline{\phi_{12}\phi_{\sigma_{ij}}}$ is surjective and induces an equivalence
$${\rm coh}\mbox{-}\mathbb{X}(2,2,2,2;\lambda)^{\langle \vec{x}_i-\vec{x}_j\rangle}\stackrel{\sim}\longrightarrow {\rm coh}\mbox{-}\mathbb{X}(2,2,2,2;f(\sqrt{\lambda'})).$$
Therefore, $({\rm coh}\mbox{-}\mathbb{X}(2,2,2,2;\lambda))^{\langle \vec{x}_i-\vec{x}_j\rangle}\stackrel{\sim}\longrightarrow {\rm coh}\mbox{-}\mathbb{X}(2,2,2,2;\mu)$ if and only if
$$ {\rm coh}\mbox{-}\mathbb{X}(2,2,2,2;\mu)\stackrel{\sim}\longrightarrow {\rm coh}\mbox{-}\mathbb{X}(2,2,2,2;f(\sqrt{\lambda'})),$$ if and only if $\Gamma(\mu)=\Gamma(f(\sqrt{\lambda'}))$. We are done.
\end{proof}

\begin{prop}\label{2222ker=4}
There exists an equivalence
$$({\rm coh}\mbox{-}\mathbb{X}(2,2,2,2;\lambda))^{\langle \vec{x}_i-\vec{x}_j,\vec{x}_i-\vec{x}_k\rangle}\stackrel{\sim}\longrightarrow {\rm coh}\mbox{-}\mathbb{X}(2,2,2,2;\mu)$$ if and only if $\Gamma(\mu)=\Gamma(\lambda)$.
\end{prop}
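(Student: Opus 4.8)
The plan is to factor the equivariantization by the Klein four subgroup $H=\langle\vec{x}_i-\vec{x}_j,\vec{x}_i-\vec{x}_k\rangle$ of $\bbL(2,2,2,2)$ as a composition of two equivariantizations by $C_2$-subgroups, each of the shape handled in Example \ref{ker=x1-x2}, and then to use the identity $f(\sqrt{f(\sqrt{\lambda})})=\lambda$ or $\frac{1}{\lambda}$ recorded above to see that the two steps cancel on $\Gamma$-classes. A permutation $\sigma$ of $\{1,2,3,4\}$ with $\sigma(l)=4$ carries $H$ to $H_0:=\langle\vec{x}_1-\vec{x}_2,\vec{x}_1-\vec{x}_3\rangle$; the induced admissible automorphism $\pi_\sigma$ of $\bbL(2,2,2,2)$, via Theorem \ref{from admissible to equivariant} and Proposition \ref{decomposition of category}, reduces the statement for $(\lambda,H)$ to the one for $(\lambda_0,H_0)$ with $\lambda_0\in\Gamma(\lambda)$, so I assume $H=H_0$ from now on.

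For ``$\Leftarrow$'' it suffices to produce one $\mu_0$ with $\Gamma(\mu_0)=\Gamma(\lambda)$ together with an equivalence $(\coh\mathbb{X}(2,2,2,2;\lambda))^{H}\stackrel{\sim}\longrightarrow\coh\mathbb{X}(2,2,2,2;\mu_0)$, for an arbitrary $\mu$ with $\Gamma(\mu)=\Gamma(\lambda)$ is then covered by composing with the equivalence of Proposition \ref{2222ker=0}. Let $\pi_{12}$ and $\phi_{12}\colon S(2,2,2,2;\rho)\to S(2,2,2,2;f(\sqrt{\rho}))$ be the admissible homomorphism and compatible algebra homomorphism of Example \ref{ker=x1-x2}, where $\ker\pi_{12}=\langle\vec{x}_1-\vec{x}_2\rangle$, $\pi_{12}(\vec{x}_1)=\pi_{12}(\vec{x}_2)=\vec{d}$, $\pi_{12}(\vec{x}_3)=\vec{z}_1+\vec{z}_2$, $\pi_{12}(\vec{x}_4)=\vec{z}_3+\vec{z}_4$, and $\bar{\phi}_{12}$ is surjective. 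Take $\pi_1:=\pi_{12}$ and $\phi_1:=\phi_{12}$ with $\rho=\lambda$, so $\phi_1\colon S(2,2,2,2;\lambda)\to S(2,2,2,2;\tau)$ with $\tau:=f(\sqrt{\lambda})$; using $\vec{d}=2\vec{z}_1$ one finds $\pi_1(H)=\langle\pi_1(\vec{x}_1-\vec{x}_3)\rangle=\langle\vec{z}_1-\vec{z}_2\rangle$, so the residual $C_2$-action after the first equivariantization is again of the standard shape. Take $\pi_2:=\pi_{12}$ and $\phi_2:=\phi_{12}$ with $\rho=\tau$; then $\ker\pi_2=\langle\vec{z}_1-\vec{z}_2\rangle=\pi_1(H)$, and $\mu_0:=f(\sqrt{\tau})=f(\sqrt{f(\sqrt{\lambda})})$ equals $\lambda$ or $\frac{1}{\lambda}$, so $\Gamma(\mu_0)=\Gamma(\lambda)$.

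Now $\pi_2\pi_1\colon\bbL(2,2,2,2)\to\bbL(2,2,2,2)$ has kernel $\pi_1^{-1}(\langle\vec{z}_1-\vec{z}_2\rangle)$, which contains $H$ and, since $|\ker\pi_1|=|\ker\pi_2|=2$ and $\pi_1$ maps $H$ onto $\ker\pi_2$, has order $4$; hence $\ker(\pi_2\pi_1)=H$ and $|\ker(\pi_2\pi_1)|=|\ker\pi_1|\,|\ker\pi_2|$, so $\pi_2\pi_1$ is admissible by Proposition \ref{Prop:3.13}. As each $\phi_i$ is compatible with $\pi_i$ with $\bar{\phi}_i$ surjective, Proposition \ref{decomposition of category} shows that $\overline{\phi_2\phi_1}$ is surjective and induces the equivalence $(\coh\mathbb{X}(2,2,2,2;\lambda))^{H}\stackrel{\sim}\longrightarrow\coh\mathbb{X}(2,2,2,2;\mu_0)$ (note $\tau,\mu_0\in\mathbf{k}\setminus\{0,1\}$, since $f(x)\in\{0,1\}$ only for $x\in\{-1,0\}$). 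This gives ``$\Leftarrow$''. Conversely, given any equivalence $(\coh\mathbb{X}(2,2,2,2;\lambda))^{H}\stackrel{\sim}\longrightarrow\coh\mathbb{X}(2,2,2,2;\mu)$, composing with the one just built yields $\coh\mathbb{X}(2,2,2,2;\mu)\stackrel{\sim}\longrightarrow\coh\mathbb{X}(2,2,2,2;\mu_0)$, whence $\Gamma(\mu)=\Gamma(\mu_0)=\Gamma(\lambda)$ by Proposition \ref{2222ker=0}.

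The only delicate point — where a naive argument fails — is checking that the residual $C_2$-action after the first equivariantization is exactly $\langle\vec{z}_1-\vec{z}_2\rangle$ in the standard labeling, not a permuted copy of it. This is what allows the second application of Example \ref{ker=x1-x2} to be performed without any intervening permutation, so that the two transformations $\lambda\mapsto f(\sqrt{\lambda})$ compose on the nose and the identity $f(\sqrt{f(\sqrt{\lambda})})\in\{\lambda,\frac{1}{\lambda}\}$ lands back in $\Gamma(\lambda)$; routing through a permuted intermediate would only yield $\Gamma(\mu_0)=\Gamma(f(\sqrt{\tau'}))$ for some $\tau'\in\Gamma(\tau)$, generally a different class.
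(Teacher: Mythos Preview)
Your argument is correct and follows essentially the same route as the paper: reduce by a permutation to $H_0=\langle\vec{x}_1-\vec{x}_2,\vec{x}_1-\vec{x}_3\rangle$, then apply the admissible homomorphism $\pi_{12}$ of Example \ref{ker=x1-x2} twice and invoke Proposition \ref{decomposition of category}, using the identity $\Gamma(f(\sqrt{f(\sqrt{\lambda})}))=\Gamma(\lambda)$. Your explicit verification that $\pi_1(H_0)=\langle\vec{z}_1-\vec{z}_2\rangle$ (so that the second step is again of the standard shape, and the two applications of $f(\sqrt{\,\cdot\,})$ compose without an intervening permutation) makes transparent a point the paper leaves implicit when it simply asserts that $\pi_{12}^2$ is admissible with kernel $H_0$.
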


\begin{proof}
Let $\pi_{\sigma}$ be the admissible homomorphism defined as follows:
$$\pi_{\sigma}: \bbL(2,2,2,2)\to \bbL(2,2,2,2);\quad\vec{x}_i\mapsto \vec{x}_1,\;\vec{x}_j\mapsto \vec{x}_2,\;\vec{x}_k\mapsto \vec{x}_3,\;\vec{x}_l\mapsto \vec{x}_4.$$ Then $\ker\pi_{\sigma}=0$. By Theorem \ref{from admissible to equivariant} and Proposition \ref{2222ker=0},
there exist some $\lambda'\in\Gamma(\lambda)$ and an algebra homomorphism $\phi_{\sigma}: S(2,2,2,2;\lambda)\to (2,2,2,2;\lambda')$, which is compatible with $\pi_{\sigma}$ and induces an equivalence
$${\rm coh}\mbox{-}\mathbb{X}(2,2,2,2;\lambda)\stackrel{\sim}\longrightarrow {\rm coh}\mbox{-}\mathbb{X}(2,2,2,2;\lambda').$$

Let $\pi_{12}$ be the automorphism on $\bbL(2,2,2,2)$ defined by $$\pi_{12}(\vec{x}_1)=\pi_{12}(\vec{x}_2)=\vec{c},\;\pi_{12}(\vec{x}_3)=\vec{x}_1+\vec{x}_2,\;\pi_{12}(\vec{x}_4)=\vec{x}_3+\vec{x}_4.$$
By Example \ref{ker=x1-x2},
there exists an algebra homomorphism $\phi_{12}: S(2,2,2,2; \lambda')\to S(2,2,2,2; f(\sqrt{\lambda'}))$, which is compatible with $\pi_{12}$ and
induces an equivalence $${\rm coh}\mbox{-}\mathbb{X}(2,2,2,2;\lambda')^{\langle \vec{x}_1-\vec{x}_2\rangle}\stackrel{\sim}\longrightarrow {\rm coh}\mbox{-}\mathbb{X}(2,2,2,2;f(\sqrt{\lambda'})).$$
Observe that $\pi_{12}^2$ is admissble with kernel $\langle \vec{x}_1-\vec{x}_2,\vec{x}_1-\vec{x}_3\rangle$. Then by Proposition \ref{decomposition of category}, $\phi_{12}^2$ is compatible with $\pi_{12}^2$ and induces an equivalence
$${\rm coh}\mbox{-}\mathbb{X}(2,2,2,2;\lambda')^{\langle \vec{x}_1-\vec{x}_2,\vec{x}_1-\vec{x}_3\rangle}\stackrel{\sim}\longrightarrow {\rm coh}\mbox{-}\mathbb{X}(2,2,2,2;f(\sqrt{f(\sqrt{\lambda'})})).$$
Similarly, the composition $\pi_{12}^2 \pi_{\sigma}$ is admissble with kernel $\langle \vec{x}_i-\vec{x}_j,\vec{x}_i-\vec{x}_k\rangle$, hence $\phi_{12}^2\phi_{\sigma}$ is compatible with $\pi_{12}^2\pi_{\sigma}$ and induces an equivalence
$${\rm coh}\mbox{-}\mathbb{X}(2,2,2,2;\lambda)^{\langle \vec{x}_i-\vec{x}_j,\vec{x}_i-\vec{x}_k\rangle}\stackrel{\sim}\longrightarrow {\rm coh}\mbox{-}\mathbb{X}(2,2,2,2;f(\sqrt{f(\sqrt{\lambda'})})).$$
Therefore, ${\rm coh}\mbox{-}\mathbb{X}(2,2,2,2;\lambda)^{\langle \vec{x}_i-\vec{x}_j,\vec{x}_i-\vec{x}_k\rangle}\stackrel{\sim}\longrightarrow {\rm coh}\mbox{-}\mathbb{X}(2,2,2,2;\mu)$ if and only if $${\rm coh}\mbox{-}\mathbb{X}(2,2,2,2;\mu)\stackrel{\sim}\longrightarrow {\rm coh}\mbox{-}\mathbb{X}(2,2,2,2;f(\sqrt{f(\sqrt{\lambda'})})),$$ if and only if $\Gamma(\mu)=\Gamma(f(\sqrt{f(\sqrt{\lambda'})}))=\Gamma(\lambda')=\Gamma(\lambda)$.
\end{proof}

\subsubsection{From $(4,4,2)$ to $(2,2,2,2)$}
Now we consider the equivariant relations between weight types (4,4,2) and (2,2,2,2).
For any admissible homomorphism $\pi: \bbL(4,4,2)\rightarrow \bbL(2,2,2,2)$, from Table \ref{table for tubular admissible} in Theorem \ref{classification} we know that $\ker\pi$ has the form $\langle \vec{x}_1-\vec{x}_2\rangle, \langle 2\vec{x}_1-2\vec{x}_2\rangle$ or $\langle 2\vec{x}_1-2\vec{x}_2, 2\vec{x}_1-\vec{x}_3\rangle$.

\begin{prop}\label{442ker=2,4} Assume $H=\langle \vec{x}_1-\vec{x}_2\rangle, \langle 2\vec{x}_1-2\vec{x}_2\rangle$ or $\langle 2\vec{x}_1-2\vec{x}_2, 2\vec{x}_1-\vec{x}_3\rangle$. There exists an equivalence
$$({\rm coh}\mbox{-}\mathbb{X}(4,4,2))^{H}\stackrel{\sim}\longrightarrow {\rm coh}\mbox{-}\mathbb{X}(2,2,2,2;\mu)$$ if and only if $\Gamma(\mu)=\Gamma(-1)$.
\end{prop}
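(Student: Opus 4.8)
The plan is to reduce everything to the already-established $(2,2,2,2)\to(2,2,2,2)$ case via a decomposition of the admissible homomorphism $\pi:\bbL(4,4,2)\to\bbL(2,2,2,2)$ together with the known equivariant equivalence between $\coh\bbX(4,4,2)$ and a suitable elliptic curve. First, I would recall that $\bbX(4,4,2)$ is of tubular type, so by \cite[Theorem 7.7]{CCZ} (as used in the proof of Proposition \ref{2222ker=0}) the equivariant category $(\coh\bbX(4,4,2))^{\bbZ\vec\omega}$ is equivalent to $\coh\mathbb{E}$ for some elliptic curve $\mathbb{E}$; and the same holds for $\bbX(2,2,2,2;\mu)$ with elliptic curve $\mathbb{E}(\mu)$. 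The key classical input is the identification of which elliptic curve corresponds to $\bbX(4,4,2)$: one expects it to be $\mathbb{E}(-1)$ (equivalently, the curve with $j$-invariant $1728$), so that an equivalence $(\coh\bbX(4,4,2))^{H}\stackrel{\sim}\to\coh\bbX(2,2,2,2;\mu)$ would force $\mathbb{E}(\mu)\cong\mathbb{E}(-1)$, i.e.\ $\Gamma(\mu)=\Gamma(-1)$.

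For the ``if'' direction I would argue constructively. By Proposition \ref{decomposition} any admissible $\pi:\bbL(4,4,2)\to\bbL(2,2,2,2)$ with Klein-type kernel $\langle 2\vec{x}_1-2\vec{x}_2, 2\vec{x}_1-\vec{x}_3\rangle$ decomposes as $\pi=\pi_2\pi_1$ through an intermediate string group, and by Proposition \ref{decomposition of category} the resulting $\overline{\phi_2\phi_1}$ is surjective once each $\overline{\phi_i}$ is, so it suffices to handle the cyclic cases $H=\langle\vec{x}_1-\vec{x}_2\rangle$ and $H=\langle 2\vec{x}_1-2\vec{x}_2\rangle$. In each of these I would exhibit an explicit algebra homomorphism $\phi: S(4,4,2;\boldsymbol\lambda)\to S(2,2,2,2;\mu)$ compatible with $\pi$ (following the pattern of Example \ref{ker=x1-x2} and the tables in Theorem \ref{from admissible to equivariant}), check surjectivity of $\bar\phi_{\vec d}$ (resp.\ $\bar\phi_{2\vec d}$) via Proposition \ref{Prop:4.2}, and thereby obtain the equivalence by Proposition \ref{Prop:2.3}. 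Composing with the $(2,2,2,2)$-to-$(2,2,2,2)$ equivalences of Propositions \ref{2222ker=0}--\ref{2222ker=4} then shows that the only $\mu$ that can occur satisfy $\Gamma(\mu)=\Gamma(-1)$, and conversely that every such $\mu$ is attained. The value $-1$ itself would be pinned down by carrying through the constants in the explicit $\phi$: the defining relations of $S(4,4,2;\boldsymbol\lambda)$ force the image parameters to lie in $\Gamma(-1)$ regardless of $\boldsymbol\lambda$, since $\bbX(4,4,2)$ has no continuous parameter (its parameter sequence is the normalized $(\infty,0,1)$).

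For the ``only if'' direction I would run the same Auslander--Reiten/elliptic-curve argument as in Proposition \ref{2222ker=0}: given an equivalence $\Phi:(\coh\bbX(4,4,2))^{H}\stackrel{\sim}\to\coh\bbX(2,2,2,2;\mu)$, one first notes that $\Phi$ intertwines the AR translations, which are degree shifts by the dualizing elements; by Proposition \ref{pi omega} and Theorem \ref{from equivariant to admissible} the subgroup $H$ is $\ker\pi$ for an admissible $\pi$ preserving $\vec\omega$, and one checks that $H+\bbZ\vec\omega$ in $\bbL(4,4,2)$ maps to $\bbZ\vec{\omega}'$ in $\bbL(2,2,2,2)$, so $\Phi$ descends to an equivalence $(\coh\bbX(4,4,2))^{H+\bbZ\vec\omega}\stackrel{\sim}\to(\coh\bbX(2,2,2,2;\mu))^{\bbZ\vec{\omega}'}\cong\coh\mathbb{E}(\mu)$. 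Since the left-hand side is independent of $\mu$ and, by the tubular classification, equals $\coh\mathbb{E}$ for the fixed curve associated to $\bbX(4,4,2)$, we get $\mathbb{E}(\mu)\cong\mathbb{E}$, hence $\Gamma(\mu)$ is a fixed value; exhibiting one explicit $\mu$ (namely $\mu=-1$, from the ``if'' direction) shows that value is $\Gamma(-1)$.

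The main obstacle I anticipate is the bookkeeping of scalar factors in the explicit algebra homomorphisms $\phi$: one must choose roots (e.g.\ square roots of $-1$, of $1-\lambda$, etc.) consistently so that $\phi$ respects the relations $Z_i^{q_i}-(Z_2^{q_2}-\mu_i Z_1^{q_1})$ of $S(2,2,2,2;\mu)$ and lands the weight-$2$ generator of $S(4,4,2;\boldsymbol\lambda)$ onto an appropriate product $z_iz_j$, while forcing the parameter to be exactly in $\Gamma(-1)$. A secondary subtlety is justifying the step ``$(\coh\bbX(4,4,2))^{\bbZ\vec\omega}\cong\coh\mathbb{E}(-1)$'': either cite it from the tubular literature (\cite{CCZ}, \cite{Len16}) or derive it as a consequence of the ``if'' direction combined with Proposition \ref{2222ker=0}, which is the cleaner route since it keeps the paper self-contained.
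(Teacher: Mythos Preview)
Your proposal is correct in outline, but the paper's route is considerably more economical, and you in fact identify that route yourself in your final paragraph.  The paper does not run the elliptic-curve/AR-translation argument for the ``only if'' direction at all.  Instead it simply establishes, for each of the three subgroups $H$, a single concrete equivalence
\[
(\coh\bbX(4,4,2))^{H}\;\stackrel{\sim}\longrightarrow\;\coh\bbX(2,2,2,2;-1),
\]
and then both directions follow immediately from Proposition~\ref{2222ker=0}: any other $\mu$ with $(\coh\bbX(4,4,2))^{H}\simeq\coh\bbX(2,2,2,2;\mu)$ gives $\coh\bbX(2,2,2,2;-1)\simeq\coh\bbX(2,2,2,2;\mu)$, hence $\Gamma(\mu)=\Gamma(-1)$.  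For the constructions themselves the paper takes $H=\langle 2\vec x_1-2\vec x_2\rangle$ as the base case, citing \cite[Proposition~3.2]{CC17} for the explicit $\phi_1:S(4,4,2)\to S(2,2,2,2;-1)$, and then obtains the other two cases by post-composing with the $(2,2,2,2)$-to-$(2,2,2,2)$ maps $\pi_{12}$ (from Example~\ref{ker=x1-x2}, using $f(\sqrt{-1})=-1$) and an analogous $\pi_{34}$, invoking Proposition~\ref{decomposition of category} each time.

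Your elliptic-curve argument for the ``only if'' direction would work, but it requires justifying that the $\bbZ\vec\omega$-action descends compatibly to $(\coh\bbX(4,4,2))^{H}$ and that the nested equivariantization behaves as expected; this is extra machinery that the paper avoids entirely.  The payoff of your approach would be a more conceptual explanation of \emph{why} $-1$ appears (as the $j=1728$ curve), whereas the paper's approach is purely computational but self-contained modulo the citation to \cite{CC17}.
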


\begin{proof} For each $H$, we claim that there is an equivalence
\begin{equation}\label{equivalence from 442 to 2222}({\rm coh}\mbox{-}\mathbb{X}(4,4,2))^{H}\stackrel{\sim}\longrightarrow {\rm coh}\mbox{-}\mathbb{X}(2,2,2,2;-1).
\end{equation}
Then by Proposition \ref{2222ker=0}, $({\rm coh}\mbox{-}\mathbb{X}(4,4,2))^{H}\stackrel{\sim}\longrightarrow {\rm coh}\mbox{-}\mathbb{X}(2,2,2,2;\mu)$ if and only if $${\rm coh}\mbox{-}\mathbb{X}(2,2,2,2;\mu)\stackrel{\sim}\longrightarrow {\rm coh}\mbox{-}\mathbb{X}(2,2,2,2;-1),$$ if and only if $\Gamma(\mu)=\Gamma(-1).$

For the claim we first consider $H=\langle 2\vec{x}_1-2\vec{x}_2\rangle$.
Observe that the following group homomorphism
$$\pi_1\colon \bbL(4,4,2)\rightarrow \bbL(2,2,2,2);\quad  \vec{x}_1\mapsto \vec{z}_1,\; \vec{x}_2\mapsto \vec{z}_2,\; \vec{x}_3\mapsto \vec{z}_3+ \vec{z}_4$$ is admissible with $\ker\pi_1=\langle 2\vec{x}_1-2\vec{x}_2\rangle$.
According to \cite[Proposition 3.2]{CC17}, there exists an algebra homomorphism $\phi_1: S(4,4,2)\to S(2,2,2,2;-1)$, which is compatible with $\pi_1$ and induces an equivalence (\ref{equivalence from 442 to 2222}).

Secondly we consider $H=\langle \vec{x}_1-\vec{x}_2\rangle$.
Let $\pi_{12}$ be the automorphism on $\bbL(2,2,2,2)$ defined by $$\pi_{12}(\vec{z}_1)=\pi_{12}(\vec{z}_2)=\vec{d},\;\pi_{12}(\vec{z}_3)=\vec{z}_1+\vec{z}_2,\;\pi_{12}(\vec{z}_4)
=\vec{z}_3+\vec{z}_4.$$
Then $\ker\pi_{12}=\langle \vec{z}_1-\vec{z}_2\rangle$. According to Example \ref{ker=x1-x2}, there exists an algebra homomorphism $\phi_{12}: S(2,2,2,2;-1)\to S(2,2,2,2;f(\sqrt{-1}))$, which is compatible with $\pi_{12}$ and
induces an equivalence $${\rm coh}\mbox{-}\mathbb{X}(2,2,2,2;-1)^{\langle \vec{z}_1-\vec{z}_2\rangle}\stackrel{\sim}\longrightarrow {\rm coh}\mbox{-}\mathbb{X}(2,2,2,2;f(\sqrt{-1})).$$
Observe that $\pi_{12}\pi_1$ is admissible with kernel $\langle \vec{x}_1-\vec{x}_2\rangle$ and $f(\sqrt{-1})=-1$. Then by Proposition \ref{decomposition of category}, $\phi_{12}\phi_1$ induces an equivalence (\ref{equivalence from 442 to 2222}).

Finally we consider $H=\langle 2\vec{x}_1-2\vec{x}_2, 2\vec{x}_1-\vec{x}_3\rangle$.
Let $\pi_{34}$ be the automorphism on $\bbL(2,2,2,2)$ defined by $$\pi_{34}(\vec{z}_1)=\vec{z}_1+\vec{z}_2,\;\pi_{34}(\vec{z}_2)
=\vec{z}_3+\vec{z}_4,\;\pi_{34}(\vec{z}_3)=\pi_{34}(\vec{z}_4)=\vec{d}.$$
By similar arguments as in Example \ref{ker=x1-x2}, the following assignments
$$z_1\mapsto (1-\sqrt{-1})z_1z_2,\;z_2\mapsto \sqrt{-1}z_3z_4,\; z_3\mapsto z_1^{2}+\sqrt{-1}z_2^{2},\;z_4\mapsto z_1^{2}-\sqrt{-1}z_2^{2}$$ define an algebra homomorphism $\phi_{34}: S(2,2,2,2;-1)\to S(2,2,2,2;-1)$, which is compatible with $\pi_{34}$ and
induces an equivalence $${\rm coh}\mbox{-}\mathbb{X}(2,2,2,2;-1)^{\langle \vec{z}_3-\vec{z}_4\rangle}\stackrel{\sim}\longrightarrow {\rm coh}\mbox{-}\mathbb{X}(2,2,2,2;-1).$$
Observe that $\pi_{34}\pi_1$ is admissible with kernel $\langle 2\vec{x}_1-2\vec{x}_2, 2\vec{x}_1-\vec{x}_3\rangle$. Then by Proposition \ref{decomposition of category}, $\phi_{34}\phi_1$ induces an equivalence (\ref{equivalence from 442 to 2222}).
\end{proof}

\subsubsection{From $(6,3,2)$ to $(2,2,2,2)$}
Now we consider the equivariant relations between weight types (6,3,2) and (2,2,2,2).
For any admissible homomorphism $\pi: \bbL(6,3,2)\rightarrow \bbL(2,2,2,2)$, from Table \ref{table for tubular admissible} in Theorem \ref{classification} we have $\ker\pi=\langle 2\vec{x}_1-\vec{x}_2 \rangle$.

\begin{prop}\label{632ker}
There exists an equivalence
$$({\rm coh}\mbox{-}\mathbb{X}(6,3,2))^{\langle 2\vec{x}_1-\vec{x}_2 \rangle}\stackrel{\sim}\longrightarrow {\rm coh}\mbox{-}\mathbb{X}(2,2,2,2;\mu)$$ if and only $\Gamma(\mu)=\Gamma(\omega)$.
\end{prop}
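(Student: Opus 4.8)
The plan is to follow the same strategy used for Proposition \ref{442ker=2,4}, reducing the problem to the already-established classification for weight type $(2,2,2,2)$ via Proposition \ref{2222ker=0}. Concretely, it suffices to prove that there is an equivalence
$$({\rm coh}\mbox{-}\mathbb{X}(6,3,2))^{\langle 2\vec{x}_1-\vec{x}_2 \rangle}\stackrel{\sim}\longrightarrow {\rm coh}\mbox{-}\mathbb{X}(2,2,2,2;\omega),$$
since once this is available, Proposition \ref{2222ker=0} gives that $({\rm coh}\mbox{-}\mathbb{X}(6,3,2))^{\langle 2\vec{x}_1-\vec{x}_2 \rangle}\stackrel{\sim}\longrightarrow {\rm coh}\mbox{-}\mathbb{X}(2,2,2,2;\mu)$ holds if and only if ${\rm coh}\mbox{-}\mathbb{X}(2,2,2,2;\mu)\stackrel{\sim}\longrightarrow {\rm coh}\mbox{-}\mathbb{X}(2,2,2,2;\omega)$, i.e. if and only if $\Gamma(\mu)=\Gamma(\omega)$.

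For the reduced claim, I would first pin down the admissible homomorphism. From Table \ref{table for tubular admissible} the unique admissible $\pi\colon \bbL(6,3,2)\to\bbL(2,2,2,2)$ with kernel $\langle 2\vec{x}_1-\vec{x}_2\rangle$ is given (up to permutation) by $\pi(\vec{x}_1)=\vec{z}_1$, $\pi(\vec{x}_2)=\vec{d}$, $\pi(\vec{x}_3)=\vec{z}_2+\vec{z}_3+\vec{z}_4$; here I am using $q_1=q_2=1$ in the $\tilde{\bf q}$-normalization from Section 3, after which $\bbL(2,3,2,2)\cong\bbL(2,2,2,2)$. The next step is to write down an explicit algebra homomorphism $\phi\colon S(6,3,2)\to S(2,2,2,2;\omega)$ compatible with $\pi$. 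Recall $S(6,3,2)=\mathbf{k}[x_1,x_2,x_3]$ with $x_3^2=x_2^3-x_1^6$, so I need elements $u=\phi(x_1)$ of degree $\vec{z}_1$, $v=\phi(x_2)$ of degree $\vec{d}$, $w=\phi(x_3)$ of degree $\vec{z}_2+\vec{z}_3+\vec{z}_4$ in $S(2,2,2,2;\omega)=\mathbf{k}[z_1,z_2,z_3,z_4]$ with $z_3^2=z_2^2-z_1^2$, $z_4^2=z_2^2-\omega z_1^2$, such that the relation $w^2=v^3-u^6$ is satisfied. Taking $v=z_1^2$ forces $v^3-u^6=z_1^6-u^6$, which should factor (over the squares of the degree-one elements) as a product realizing the node/ramification data of $(6,3,2)$; the expected choice is $u=z_1$ (so $u^6=z_1^6$ fails) — rather I expect $v$ to be a suitable linear combination of $z_2^2,z_3^2,z_4^2$ and $w$ a scalar multiple of $z_2z_3z_4$, with the scalars forced by expanding $(z_2z_3z_4)^2 = z_2^2(z_2^2-z_1^2)(z_2^2-\omega z_1^2)$ and matching it against $v^3-u^6$ after the substitution $v=z_2^2$, $u^6 = $ (the degree-$\vec{c}$ element in the image). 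The parameter $\omega=\frac{1+\sqrt{-3}}{2}$ enters precisely because $1-\omega=-\omega^{2}$ and $\omega(1-\omega)=-\omega\cdot\omega^2=-\omega^3=1\cdot(\cdots)$, making the cubic $t(t-1)(t-\omega)$ in $t=z_2^2/z_1^2$ have the symmetry needed for it to be a perfect cube up to the node structure — this is the $j=0$ elliptic point phenomenon. I would verify the identity $w^2=v^3-u^6$ by direct expansion, choosing the normalization scalar for $w$ accordingly.

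Once $\phi$ is constructed, the remaining steps are routine and mirror Section 4: by Lemma \ref{Prop:4.1}, compatibility with $\pi$ need only be checked on generators, which holds by the choice of degrees; and by Proposition \ref{Prop:4.2}(1) (the Cyclic type case), it remains to check that $\bar\phi_{\vec{d}}$ is surjective, i.e.\ that $\phi$ hits a $\mathbf{k}$-basis of $S(2,2,2,2;\omega)_{\vec{d}}$. Since $\dim_{\mathbf k}S(2,2,2,2;\omega)_{\vec d}=2$ and $\phi(x_2)=v$ together with $\phi(x_1^{3})=u^{3}$ (both of degree $\vec d$, and linearly independent by construction) span this space, surjectivity of $\bar\phi_{\vec d}$ follows, hence $\bar\phi$ is surjective. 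Then Proposition \ref{Prop:2.3} yields the equivalence $({\rm coh}\mbox{-}\mathbb{X}(6,3,2))^{\langle 2\vec{x}_1-\vec{x}_2\rangle}\stackrel{\sim}\longrightarrow {\rm coh}\mbox{-}\mathbb{X}(2,2,2,2;\omega)$, completing the proof. The main obstacle is the explicit construction of $\phi$: getting the scalars and the precise linear combinations right so that $w^2=v^3-u^6$ holds identically in $S(2,2,2,2;\omega)$, and recognizing that this forces the parameter to lie in $\Gamma(\omega)$ — everything else is bookkeeping that has already been done in analogous form in \cite{CC17} and in the preceding propositions of this section.
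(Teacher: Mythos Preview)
Your approach is essentially the same as the paper's: reduce to the single equivalence with parameter $\omega$, then invoke Proposition~\ref{2222ker=0}. The paper, however, does not carry out the construction of $\phi$ at all---it simply cites \cite[Proposition~3.4]{CC17} for the equivalence $({\rm coh}\mbox{-}\mathbb{X}(6,3,2))^{\langle 2\vec{x}_1-\vec{x}_2\rangle}\stackrel{\sim}\longrightarrow {\rm coh}\mbox{-}\mathbb{X}(2,2,2,2;\omega)$ and then applies Proposition~\ref{2222ker=0} exactly as you do. So your extra work is a (sketched) reconstruction of that cited result.

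One small slip in your sketch: since $\pi(\vec{x}_1)=\vec{z}_1$ and $2\vec{z}_1=\vec{d}$ in $\bbL(2,2,2,2)$, the element of degree $\vec{d}$ coming from $x_1$ is $\phi(x_1^{2})=u^{2}$, not $\phi(x_1^{3})=u^{3}$ (the latter has degree $\vec{z}_1+\vec{d}$). Thus the pair spanning $S(2,2,2,2;\omega)_{\vec{d}}$ should be $\phi(x_1^{2})$ and $\phi(x_2)$. This matches the general pattern in Proposition~\ref{Prop:4.2}(1), where one uses $x_1^{p_1/n}$ and $x_2^{p_2/n}$; here $p_1/n=6/3=2$ and $p_2/n=3/3=1$.
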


\begin{proof}
Recall from \cite[Proposition 3.4]{CC17} that there exists an equivalence
$$({\rm coh}\mbox{-}\mathbb{X}(6,3,2))^{\langle 2\vec{x}_1-\vec{x}_2\rangle}\stackrel{\sim}\longrightarrow {\rm coh}\mbox{-}\mathbb{X}(2,2,2,2;\omega).$$
Therefore, $({\rm coh}\mbox{-}\mathbb{X}(6,3,2))^{\langle 2\vec{x}_1-\vec{x}_2\rangle}\stackrel{\sim}\longrightarrow {\rm coh}\mbox{-}\mathbb{X}(2,2,2,2;\mu)$ if and only if $${\rm coh}\mbox{-}\mathbb{X}(2,2,2,2;\mu)\stackrel{\sim}\longrightarrow {\rm coh}\mbox{-}\mathbb{X}(2,2,2,2; \omega),$$ if and only if $\Gamma(\mu)=\Gamma(\omega).$
\end{proof}

\subsubsection{Main theorem for tubular types}

Now we can classify all the equivariant relations for weighted projective lines of tubuar types.
\begin{thm} \label{classification of tubular case} Assume $\bbL(\mathbf{p})$ and $\bbL(\mathbf{q})$ are both of tubular types. Let $H$ be a finite subgroup of $\bbL(\bf p)$. Then the following statements are equivalent:
\begin{itemize}
  \item [(1)] there exists an equivalence $({\rm coh}\mbox{-}  \mathbb{X}(\mathbf{p}; {\boldsymbol\lambda}))^{H}\stackrel{\sim}\longrightarrow {\rm coh}\mbox{-}  \mathbb{X}(\mathbf{q}; {\boldsymbol\mu})$ for some ${\boldsymbol\lambda}; {\boldsymbol\mu}$;
  \item [(2)] there exists an admissible homomorphism $\pi\colon \bbL(\mathbf{p})\rightarrow \bbL(\mathbf{q})$ with $\ker\pi=H$.
\end{itemize}
 Moreover, all the possibilities for the parameters are listed as below:

\begin{table}[ht]
\begin{tabular}{|c|c|c|c|}
\hline
$(\mathbf{p}; {\boldsymbol\lambda})$&$(\mathbf{q}; {\boldsymbol\mu})$&$H$&$\Gamma(\mu)$\\
\hline
\multirow{2}*{$(2,2,2,2;\lambda$)}&\multirow{2}*{$(2,2,2,2;\mu$)}&$\langle \vec{x}_i-\vec{x}_j\rangle$&$ \Gamma(f(\sqrt{\lambda'})); \;\lambda'\in\Gamma(\lambda)$\\
\cline{3-4}
&&$\langle \vec{x}_i-\vec{x}_j,\vec{x}_i-\vec{x}_k\rangle$&$\Gamma(\lambda)$\\
\hline
\multirow{4}*{$(4,4,2)$}&$(4,4,2)$&$\langle 2\vec{x}_1-\vec{x}_3\rangle,\langle 2\vec{x}_2-\vec{x}_3\rangle$& \\
\cline{2-4}
&\multirow{3}*{$(2,2,2,2;\mu$)}&$\langle \vec{x}_1-\vec{x}_2\rangle$&\multirow{3}*{$\Gamma(-1)$}\\
\cline{3-3}
&&$\langle 2\vec{x}_1-2\vec{x}_2\rangle$&\\
\cline{3-3}
&&$\langle 2\vec{x}_1-2\vec{x}_2,2\vec{x}_1-\vec{x}_3\rangle$&\\
\hline
$(3,3,3)$&$(3,3,3)$&$\langle \vec{x}_i-\vec{x}_j\rangle$&\multirow{2}*{} \\
\cline{1-3}
\multirow{2}*{$(6,3,2)$}&$(3,3,3)$&$\langle 3\vec{x}_1-\vec{x}_3\rangle$& \\
\cline{2-4}
&$(2,2,2,2;\mu$)&$\langle 2\vec{x}_1-\vec{x}_2\rangle$&$\Gamma(\omega)$\\
\hline
\end{tabular}
\vspace{1em}
 \caption{The list of equivariant relations between tubular types}
\end{table}
\end{thm}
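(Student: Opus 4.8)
The plan is to prove Theorem \ref{classification of tubular case} by combining the general equivalence between admissible homomorphisms and equivariant equivalences (Theorem A, i.e. Theorem \ref{from admissible to equivariant} and Theorem \ref{from equivariant to admissible}) with the complete classification of admissible homomorphisms for tubular types (Theorem \ref{classification}, Table \ref{table for tubular admissible}) and the parameter analysis carried out in Propositions \ref{2222ker=0}, \ref{2222ker=2}, \ref{2222ker=4}, \ref{442ker=2,4} and \ref{632ker}. The equivalence of statements (1) and (2) is, in fact, immediate: it is exactly the content of Theorem A specialized to the case where both $\bbL(\mathbf{p})$ and $\bbL(\mathbf{q})$ are of tubular type, so the only new content is (a) verifying that all admissible $\pi$ actually occurring between tubular types are precisely those in Table \ref{table for tubular admissible}, and (b) pinning down, for each such $\pi$, exactly which parameters $\boldsymbol{\mu}$ can appear.

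First I would observe that by Proposition \ref{pi omega}, an admissible homomorphism $\pi$ preserves the type; so if $\bbL(\mathbf{p})$ is tubular then $\bbL(\mathbf{q})$ is tubular as well, and by Proposition \ref{ker pi} and Proposition \ref{from subgroup to admissible homomorphism} the pair $(\mathbf{p},\ker\pi)$ determines $\mathbf{q}$ and $\pi$ up to permutation. It therefore remains to run through the four tubular weight types $\mathbf{p}=(2,2,2,2),(3,3,3),(4,4,2),(6,3,2)$, list the subgroups of $t\bbL(\mathbf{p})$ of Cyclic or Klein type, and check in each case that $\bbL(\mathbf{q})$ (derived via Proposition \ref{from subgroup to admissible homomorphism}) is again tubular — this is a short finite check already recorded in Table \ref{table for tubular admissible}. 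For $\mathbf{p}=(2,2,2,2)$ the torsion group is $t\bbL(2,2,2,2)\cong C_2\times C_2\times C_2$ (generated by the $\vec{x}_i-\vec{x}_j$), whose Cyclic-type subgroups are the $\langle\vec{x}_i-\vec{x}_j\rangle$ and whose Klein-type subgroups are the $\langle\vec{x}_i-\vec{x}_j,\vec{x}_i-\vec{x}_k\rangle$; for $\mathbf{p}=(3,3,3)$ one gets $t\bbL\cong C_3$ with the subgroups $\langle\vec{x}_i-\vec{x}_j\rangle$; for $(4,4,2)$ and $(6,3,2)$ the torsion groups are computed directly, yielding exactly the kernels listed. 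Combining with Theorem \ref{from admissible to equivariant} gives the implication (2)$\Rightarrow$(1), and Theorem \ref{from equivariant to admissible} gives (1)$\Rightarrow$(2).

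For the parameter column I would assemble the results already proved: Proposition \ref{2222ker=2} handles $\mathbf{p}=\mathbf{q}=(2,2,2,2)$ with $H=\langle\vec{x}_i-\vec{x}_j\rangle$, giving $\Gamma(\mu)=\Gamma(f(\sqrt{\lambda'}))$ for some $\lambda'\in\Gamma(\lambda)$; Proposition \ref{2222ker=4} handles the Klein case, giving $\Gamma(\mu)=\Gamma(\lambda)$; Proposition \ref{442ker=2,4} handles all three kernels for $\mathbf{p}=(4,4,2)\to\mathbf{q}=(2,2,2,2)$, giving $\Gamma(\mu)=\Gamma(-1)$; Proposition \ref{632ker} handles $(6,3,2)\to(2,2,2,2)$, giving $\Gamma(\mu)=\Gamma(\omega)$. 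For the remaining rows — $(4,4,2)\to(4,4,2)$, $(3,3,3)\to(3,3,3)$ and $(6,3,2)\to(3,3,3)$ — the target has no free parameter, so there is nothing to record beyond existence of the equivalence, which follows from Theorem \ref{from admissible to equivariant} applied to the relevant $\pi$ in Table \ref{table for tubular admissible}; for $\mathbf{q}=(3,3,3)$ one should also invoke the analogue of Proposition \ref{2222ker=0} (uniqueness of $\bbX(3,3,3)$ up to equivalence, which holds since the cross-ratio of three ramification points is rigid). Collecting these produces Table \ref{classification of tubular case}.

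The main obstacle, and the only place requiring genuine care rather than bookkeeping, is ensuring the parameter bookkeeping in the $(2,2,2,2)$-to-$(2,2,2,2)$ rows is airtight: one must check that the composite admissible homomorphisms built in Propositions \ref{2222ker=2} and \ref{2222ker=4} (a permutation $\pi_\sigma$ followed by the degree-shift-type $\pi_{12}$, squared in the Klein case) genuinely realize every kernel of the prescribed type and that the identity $\Gamma(f(\sqrt{f(\sqrt{\lambda})}))=\Gamma(\lambda)$ — used to close the loop in the Klein case — is correct; both rely on the elementary properties of $f$ recorded before Proposition \ref{2222ker=2}. Once that is in place, the theorem follows by simply tabulating. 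I would therefore structure the write-up as: (i) reduce (1)$\Leftrightarrow$(2) to Theorems \ref{from admissible to equivariant} and \ref{from equivariant to admissible}; (ii) enumerate the tubular $\mathbf{p}$'s and their Cyclic/Klein subgroups, citing Proposition \ref{from subgroup to admissible homomorphism} to obtain $\mathbf{q}$; (iii) for each row read off the parameter constraint from the corresponding Proposition above; (iv) assemble the table.
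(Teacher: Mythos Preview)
Your proposal is correct and follows essentially the same approach as the paper's own proof, which simply invokes Theorems \ref{from admissible to equivariant} and \ref{from equivariant to admissible} for the equivalence (1)$\Leftrightarrow$(2) and then cites Propositions \ref{2222ker=2}, \ref{2222ker=4}, \ref{442ker=2,4} and \ref{632ker} for the parameter column. You have supplied additional expository detail (the structure of $t\bbL(\mathbf{p})$ for each tubular $\mathbf{p}$, the rigidity remark for $t\le3$), but the logical skeleton is identical.
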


\begin{proof} According to Theorem \ref{from admissible to equivariant} and Theorem
\ref{from equivariant to admissible}, we only need to consider the parameters involved. Then the result follows from Propositions \ref{2222ker=2}, \ref{2222ker=4}, \ref{442ker=2,4} and \ref{632ker}.
\end{proof}

\noindent {\bf Acknowledgements.}\quad
The authors are grateful to Xiao-Wu Chen and Helmut Lenzing for their helpful discussions and comments.
S. Ruan is indebted to Henning Krause and William Crawley-Boevey for their supports and hospitalities during his visiting on Bielefeld University since 2018.

This work is supported by the National Natural Science Foundation of China (No.s 11801473, 11871404 and 11971398), the Fundamental Research Funds for Central Universities of China (No.s 20720180002 and 20720180006).

\vskip 5pt
\noindent {\tiny  \noindent Jianmin Chen, Yanan Lin, Shiquan Ruan and Hongxia Zhang\\
School of Mathematical Sciences, \\
Xiamen University, Xiamen, 361005, Fujian, PR China.\\
E-mails: chenjianmin@xmu.edu.cn, ynlin@xmu.edu.cn, sqruan@xmu.edu.cn,
hxzhangxmu@163.com\\ }
\vskip 3pt

\end{document}